\newtheorem{defn}{Definition}[section]
\newtheorem{theo}[defn]{Theorem}
\newtheorem{lem}[defn]{Lemma}
\newtheorem{prop}[defn]{Proposition}
\newtheorem{cor}[defn]{Corollary}
\newtheorem{rem}[defn]{Remark}
\newenvironment{proof}{{\bf Proof }}{{\vskip 0.1cm \hfill$\Box$}}
\begin{document} 

\noindent
{\Large \bf Analysis of linear elliptic equations with general drifts and $L^1$-zero-order terms}
\\ \\
\bigskip
\noindent
{\bf Haesung Lee}  \\
\noindent
{\bf Abstract.} This paper provides a detailed analysis of the Dirichlet boundary value problem for linear elliptic equations in divergence form with $L^p$-general drifts, where $p \in (d, \infty)$, and non-negative $L^1$-zero-order terms. Specifically, by transforming the general drifts into weak divergence-free drifts, we establish the existence and uniqueness of a bounded weak solution, showing that the zero-order term does not influence the quantity of the unique weak solution. Additionally, by imposing the $ VMO$ condition and mild differentiability on the diffusion coefficients and assuming an $L^s$-zero-order terms with $s \in (1, \infty)$, we demonstrate the existence and uniqueness of a strong solution for the corresponding non-divergence type equations. An important feature of this paper is that, due to the weak divergence-free property of the drifts in the transformed equations, the constants appearing in our estimates can be explicitly calculated,  which is expected to offer significant applications in error analysis. \\ \\
\noindent
{Mathematics Subject Classification (2020): {Primary: 35J25, 35R05, Secondary: 35B65, 35B35}}\\

\noindent 
{Keywords: Linear elliptic equations, boundary value problems, existence and uniqueness, contraction estimates, weak divergence-free drifts, error analysis
}

\section{Introduction} 
This paper mainly deals with the existence and uniqueness of a weak solution (see \eqref{weakdefzerve} and Definition \ref{defnofweaksol}) to the following Dirichlet boundary value problem for linear elliptic equations in a bounded open subset $U$ of $\mathbb{R}^d$ in divergence form:
\begin{equation} \label{maineq2}
\left\{
\begin{alignedat}{2}
-{\rm div}(A \nabla u) + \langle \bold{H}, \nabla u \rangle + c u &=f  && \quad \; \mbox{\rm in $U$,}\\
u &= 0 &&\quad \; \mbox{\rm on $\partial U$},
\end{alignedat} \right.
\end{equation}
where it is assumed that $A=(a_{ij})_{1 \leq i,j \leq d}$ is uniformly strictly elliptic and bounded (i.e. \eqref{elliptici} holds), $\bold{H} \in L^p(U, \mathbb{R}^d)$ with $p \in (d, \infty)$, $c \in L^1(U)$ with $c \geq 0$ in $U$ and $f \in L^q(U)$ with $q \in (\frac{d}{2}, \infty)$.
 As a counterpart to the divergence form, we additionally study the existence and uniqueness of a strong solution (see \eqref{strongformiden} and Definition \ref{defnstrongsol}) to the following Dirichlet boundary value problem for linear elliptic equations in non-divergence form:
\begin{equation} \label{maineqnondiv}
\left\{
\begin{alignedat}{2}
-\text{\rm trace}(A \nabla^2 u) + \langle \widehat{\bold{H}}, \nabla u \rangle + c u &=f  && \quad \;\mbox{\rm in $U$,}\\
u &= 0 &&\quad \;\mbox{\rm on $\partial U$},
\end{alignedat} \right.
\end{equation}
where it is additionally assumed that $\partial U$ is of class $C^{1,1}$, $a_{ij} \in VMO_{\omega}$ for all $1 \leq i,j \leq d$, $\text{div} A \in L^p(U,\mathbb{R}^d)$ (see Definitions \ref{divecdefn}, \ref{vmodefn}), $\widehat{\bold{H}} \in L^p(U, \mathbb{R}^d)$ and $c \in L^{s}(U)$ with $s \in (1, \infty)$. The specific conditions will be provided in our main results in this introduction. \\
Let us briefly review previous results for the existence and uniqueness of a weak solution to \eqref{maineq2}, where the essential approach is based on a continuous bilinear form on a Hilbert space and the Lax-Milgram theorem (\cite[Corollary 5.8]{Br11}). For instance, under the assumption that $A$ is uniformly strictly elliptic and bounded (i.e. \eqref{elliptici} holds), $\bold{H} \in L^d(U, \mathbb{R}^d)$, $c \in L^{\frac{d}{2}}(U)$ with $c \geq 0$ in $U$, $f \in L^{\frac{2d}{d+2}}(U)$
if $d \geq 3$ and $\bold{H} \in L^{r}(U, \mathbb{R}^d)$, $c \in L^{\frac{r}{2}}(U)$ with $r>2$ and $c \geq 0$ in $U$, $f \in L^{\frac{r}{2}}(U)$ if $d=2$, one can check in \cite{S65} (cf. \cite[Lemma 2.1.3]{BKRS15}) the existence of a constant $\gamma_0>0$ which only depends on $d$, $\lambda$, $U$ and $\bold{H}$, such that if $c \geq \gamma_0$ in $U$, then the existence and uniqueness of a weak solution to \eqref{maineq2} are guaranteed.
Moreover, additionally assuming that $c \in L^{r}(U)$ and $f \in L^{r}(U)$ with $r>\frac{d}{2}$, the boundedness and the continuity of a unique weak solution were verified in \cite{S65} based on De Giorgi's original ideas. In the general case of $c \geq 0$ in $U$ including the degenerate coefficients of $A=(a_{ij})_{1 \leq i,j \leq d}$, N.S. Trudinger showed in \cite{T73} (cf. \cite[Proposition 2.1.4]{BKRS15})
the existence and uniqueness of a weak solution to \eqref{maineq2} by using the well-posedness result for $c \geq \gamma$ in $U$ and the weak maximum principle (cf. \cite{T77} and \cite[Theorem 2.1.8]{BKRS15}) with a functional analytic result called Fredholm alternative (\cite[Theorem 6.6]{Br11}). \\
From the perspective of maintaining the continuity of the bilinear form associated with \eqref{maineq2}, the regularity conditions on $\bold{H}$ and $c$ mentioned in \cite{S65} seem optimal. However, by imposing a weak divergence-free condition on $\bold{H}$, even in the case where $\bold{H} \in L^2(U, \mathbb{R}^d)$, $A=id$ and $c=0$ in $U$, the existence and uniqueness of a bounded weak solution could be shown in \cite[Section 2.2]{Kon07}. 
Additionally, in the absence of singular zero-order terms, we refer to \cite{M11, KK15, KK19, HK21, HK22}  for results on the existence and uniqueness of the $W^{1,p}$-weak solutions of the divergence form equations with singular drifts.
When the singular zero-order terms $c$ and the drifts $\bold{H}$ belong to a Lorentz space, we mention \cite{GGM13, KT20, KO23, KPT23} for results on the existence and uniqueness of $W^{1,p}$-weak solutions of the divergence form equations. Here, we refer to \cite{KO23}, which presents results on the existence and uniqueness of very weak solutions closely related to invariant measures in stochastic analysis. When the zero-order terms $c$ belong to \( L^{\frac{pd}{p+d}}(U) \) with \( p \in (d, \infty) \) and the drifts $\bold{H}$ are in \( L^p(U, \mathbb{R}^d)\), refined Morrey estimates for weak solutions of the divergence form equations are obtained in \cite{S06}, assuming a priori the existence of a solution.
\\
Regarding more general conditions on the zero-order term, \cite{L24} demonstrates the existence and uniqueness of bounded weak solutions and $L^r$-contraction estimates of the solutions with $r \in [1, \infty]$ under the assumption that $\bold{H} \in L^{2}(U, \mathbb{R}^d)$ has negative weak divergence and $c \in L^{\frac{2d}{d+2}}(U)$ and $c \geq 0$ in $U$ if $d \geq 3$ and $c \in L^{\frac{r}{2}}(U)$ with $r>2$ and $c \geq 0$ in $U$ if $d=2$.
A natural question arises regarding the existence and uniqueness of a bounded weak solution when $c \in L^1(U)$ with $c \geq 0$ in $U$, and $\bold{H}$ no longer satisfies the negative weak divergence condition. Our first main result provides an answer to the natural question under the following conditions.\\ \\
{\bf (S)}:
{\it
$U$ is a bounded open subset of $\mathbb{R}^d$ with $d \geq 2$, $B_r(x_0)$ is an open ball in $\mathbb{R}^d$ with $\overline{U} \subset B_r(x_0)$, 
$\bold{H} \in L^p(U, \mathbb{R}^d)$, $h \in L^p(U)$  
with $p \in (d, \infty)$ satisfies $\| \bold{H}\| \leq h$ in $U$, and  $A=(a_{ij})_{1 \leq i,j \leq d}$ is a (possibly non-symmetric) matrix of measurable functions on $\mathbb{R}^d$ such that for some constants $M>0$ and $\lambda>0$ it holds that
\begin{equation} \label{elliptici}
\max_{1 \leq i,j \leq d} |a_{ij}(x)| \leq M, \;\; \; \; \langle A(x) \xi, \xi \rangle \geq \lambda \| \xi \|^2 \;\; \quad \text{ for a.e. $x \in \mathbb{R}^d$ and for all $\xi \in \mathbb{R}^d$}.
\end{equation}
}
\begin{theo} \label{mainexisthm}
Assume that {\bf (S)} holds, $c \in L^1(U)$ with $c \geq 0$ in $U$ and $f \in L^q(U)$ with $q \in (\frac{d}{2}, \infty)$. Then, the following hold:
\begin{itemize}
\item[(i)]
There exists $u \in H^{1,2}_0(U)_b$ (i.e. $u \in H^{1,2}_0(U) \cap L^{\infty}(U)$) such that $u$ is a weak solution to \eqref{maineq2},
 i.e. $u \in H^{1,2}_0(U)$ with $cu \in L^1(U)$ satisfies
\begin{equation} \label{weakdefzerve}
\int_U \langle A \nabla u, \nabla \psi \rangle + \langle \bold{H}, \nabla u \rangle \psi   + cu \psi dx = \int_U f \psi dx \quad \text{ for all } \psi \in C_0^{\infty}(U).
\end{equation}
Moreover, it holds that
\begin{equation} \label{estimenbd2} 
\|u\|_{H_0^{1,2}(U)}  \leq K_5 \|f\|_{L^{2_*}(U)}
\end{equation}
and that
\begin{equation} \label{bddestmizte}
\|u\|_{L^{\infty}(U)} \leq K_6 \|f\|_{L^q(U)},
\end{equation}
where $2_{*}:=\frac{2d}{d+2}$ if $d \geq 3$ and $2_{*}$ is an arbitrarily fixed number in $(1,2)$ if $d=2$, 
$K_5=K_1K_3$, $K_6=K_1K_4$, $K_1>0$ is a constant as in Theorem \ref{existrho} which depends only on $d$, $\lambda$, $M$, $r$, $p$, $\|h\|_{L^p(U)}$, $K_3>0$ is a constant which only depends on $d$, $\frac{\lambda}{K_1}$, $|U|$, and $K_4>0$ is a constant which only depends on $d$, $\frac{\lambda}{K_1}$, $|U|$ and $q$. 
\item[(ii)]
$u$ in Theorem \ref{mainexisthm}(i) is a unique weak solution to \eqref{maineq2}, i.e. if $v \in H^{1,2}_0(U)$ with $cv \in L^1(U)$ satisfies \eqref{weakdefzerve} where $u$ is replaced by $v$, then $u = v$ in $U$.
\item[(iii)]
If $\alpha>0$ is a constant, $c \geq \alpha$ in $U$ and $f \in L^{\theta}(U) \cap L^q(U)$ with $\theta \in [1, \infty]$, then $u$ in Theorem \ref{mainexisthm}(i) satisfies the following contraction estimates:
\begin{equation} \label{normalcontrac}
\|u \|_{L^{\theta}(U)} \leq \frac{K_1}{\alpha} \| f \|_{L^{\theta}(U)}.
\end{equation}
\end{itemize}
\end{theo}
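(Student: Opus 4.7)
The strategy is to appeal to Theorem \ref{existrho} to obtain a positive function $\rho$ on $U$, bounded above and below by constants involving $K_1$, for which the transformed drift $\tilde{\bold{H}} := \rho \bold{H} + A^T \nabla \rho$ is weakly divergence-free on $U$. Multiplying \eqref{maineq2} by $\rho$ and applying the product rule rewrites the problem as
\begin{equation*}
-{\rm div}(\rho A \nabla u) + \tilde{\bold{H}} \cdot \nabla u + \rho c u = \rho f,
\end{equation*}
so that $\tilde{\bold{H}}$ antisymmetrizes out of every energy-type test. Existence, uniqueness, and the contraction estimates will all be proven for this transformed equation.

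For part (i), I would approximate by $c_n := \min(c,n) \in L^\infty(U)$ and solve the transformed approximate equation by Lax--Milgram (the associated bilinear form is coercive with constant $\lambda/K_1$, since the drift cancels on the diagonal), producing $u_n \in H^{1,2}_0(U)$. Testing with $u_n$ itself and applying Sobolev embedding yields $\|u_n\|_{H^{1,2}_0(U)} \leq K_5 \|f\|_{L^{2_*}(U)}$, while De Giorgi--Stampacchia iteration with test function $(u_n - k)^+$ for $k \geq 0$ — exploiting again that the drift vanishes and that $\rho c_n u_n (u_n - k)^+ \geq 0$ — gives $\|u_n\|_{L^\infty(U)} \leq K_6 \|f\|_{L^q(U)}$. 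Crucially, $K_6$ is independent of $n$ because $c_n$ enters only through a sign-favorable term; this is what makes the $L^1$ zero-order term tractable in the limit. Extracting a subsequence $u_n \rightharpoonup u$ in $H^{1,2}_0(U)$, weakly* in $L^\infty(U)$, and strongly in $L^r(U)$ for every $r < \infty$, the only non-routine passage is $c_n u_n \to cu$ in $L^1(U)$, which holds by dominated convergence since $|c_n u_n| \leq c \cdot K_6 \|f\|_{L^q(U)} \in L^1(U)$.

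For part (ii), if $v$ is another weak solution with $cv \in L^1(U)$, then $w := u - v \in H^{1,2}_0(U)$ satisfies the homogeneous transformed equation. Testing with $w_M^+ := \min(w^+, M) \in H^{1,2}_0(U) \cap L^\infty(U)$ is admissible since $|\rho c w\, w_M^+| \leq M \rho c|w| \in L^1(U)$; the drift term vanishes because $\int_U \tilde{\bold{H}} \cdot \nabla w \cdot w_M^+ dx = \tfrac{1}{2}\int_U \tilde{\bold{H}} \cdot \nabla (w_M^+)^2 dx = 0$, the zero-order term $\rho c w\, w_M^+$ is non-negative, and the diffusion contribution controls $\|\nabla w_M^+\|_{L^2(U)}^2$. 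Thus $\nabla w_M^+ \equiv 0$ for every $M > 0$, forcing $w^+ \equiv 0$; a symmetric argument gives $w^- \equiv 0$. For part (iii) with $\theta \in (1, \infty)$, I would test the transformed equation with an admissible smooth approximation of $|u|^{\theta-2} u$ (valid since $u$ is bounded): the drift contribution vanishes because $|u|^{\theta-2} u \nabla u = \nabla(|u|^\theta/\theta)$, the diffusion contribution is non-negative, and $c \geq \alpha$ yields $\alpha \int_U \rho |u|^\theta dx \leq \int_U \rho |f||u|^{\theta-1} dx$, which by H\"older's inequality and the $\rho$-bounds gives \eqref{normalcontrac}; the endpoints $\theta = 1$ and $\theta = \infty$ follow by passage to the limit (alternatively, for $\theta = \infty$ one tests directly with $(u - \|f\|_{L^\infty}/\alpha)^+$).

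The principal obstacle is making the $L^\infty$ bound on the approximations $u_n$ uniform in $n$ even though $c$ is only $L^1$: this is precisely what enables the dominated-convergence passage in the zero-order term as $n \to \infty$, and it succeeds only because the transformed drift is divergence-free, so the De Giorgi constant $K_6$ depends on $\lambda/K_1$, $|U|$, and $q$ alone, with no dependence on $c$ whatsoever.
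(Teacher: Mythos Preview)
Your approach is correct and, for parts (i) and (iii), essentially coincides with the paper's: the paper also transforms via the weight $\rho$ from Theorem~\ref{existrho}, approximates $c$ by $c_n = c \wedge n$, invokes existing results for divergence-free drift (packaged as Theorem~\ref{theomaineun}, citing \cite{L24}) to get uniform $H^{1,2}_0$ and $L^\infty$ bounds independent of $n$, and passes to the limit exactly as you describe; the contraction estimate (iii) is likewise obtained by applying the $L^\theta$-contraction for the transformed equation and unwinding the $\rho$-weights.

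The genuine difference is in (ii). The paper proves uniqueness by \emph{duality}: given $\phi \in C_0^\infty(U)$, it solves the adjoint transformed problem (with $\hat A^T$, $-\hat{\bold B}$, same $\hat c$) for a bounded $\hat w$, tests the homogeneous equation for $\hat v$ against $\hat w$, and integrates by parts to conclude $\int_U \hat v\,\phi\,dx = 0$. Your direct truncation argument is more elementary and avoids solving an auxiliary problem; its cost is that one must handle the drift term carefully. On that point, your identity $w_M^+\,\nabla w = \tfrac12\nabla(w_M^+)^2$ is not correct on $\{w > M\}$: the right formula is
\[
w_M^+\,\nabla w \;=\; \nabla\!\left(\tfrac12(w_M^+)^2 + M\,(w-M)^+\right),
\]
which is still the gradient of a function in $H^{1,2}_0(U)$, so the integral against the divergence-free $\tilde{\bold H}$ does vanish and your conclusion stands. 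The duality route in the paper sidesteps such pointwise bookkeeping and extends more readily to situations where energy testing is awkward, but in the present setting your argument is a valid (and arguably cleaner) alternative.
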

The key ingredients of the proof of Theorem \ref{mainexisthm} are Theorems \ref{existrho}, \ref{mainequivthm}, where the main idea is to transform the equation \eqref{weakdefzerve} with general drift $\bold{H}$ into an equation with divergence-free drift.
We emphasize that the constants $K_1, K_3, K_4>0$ are independent of $c \in L^1(U)$. The contraction estimates in (iii) are remarkable in that $K_1 > 0$ is an explicit constant derived from the Harnack inequality (see the proof of Theorem \ref{existrho} and Remark \ref{imptremaforme}). Additionally, the contraction estimate with $\theta = 2$ is applicable in a posteriori error analysis (see Remark \ref{posteriana}).
Similar types of contraction estimates, as in Theorem \ref{mainexisthm}(iii) are also observed in  \cite[Chapter 11]{Kr08}, \cite[Theorem 3.4]{DL04} and \cite[Theorem 2.1]{KK19}. In case where $f$ is replaced by $f-{\rm div} \bold{F}$ in the weak sense, even in the restricted cases of $d = 2, 3$, results similar to Theorem \ref{mainexisthm} can be derived in Theorems \ref{mainexisthm2n} and \ref{specialresd}, respectively.  For more general high-dimensional cases where $f$ is replaced by $f-{\rm div} \bold{F}$ in the weak sense, the assumption that $a_{ij} \in VMO_{\omega}$ for all $1 \leq i, j \leq d$ is necessary to obtain results similar to Theorem \ref{mainexisthm} (see Theorem \ref{vmouniqweaksol}). \\
Next, in non-divergence type equation, let us investigate the previous results for the existence and uniqueness of a strong solution to \eqref{maineqnondiv}.
As a well-known result, \cite[Chapter 9]{GT01} establishes the existence and uniqueness of a strong solution to \eqref{maineqnondiv} under the assumption that $d \geq 2$, all components of $A$ are continuous in $\overline{U}$, $\widehat{\bold{H}} \in L^{\infty}(U, \mathbb{R}^d)$, and $c \in L^{\infty}(U)$ with $c \geq 0$ in $U$. In fact, under more general conditions allowing $\widehat{\bold{H}} \in L^d(U, \mathbb{R}^d)$ and $c \in L^{\frac{d}{2}}(U)$ with $d \geq 3$ and $\inf_{U} c >0$, the existence and uniqueness of a strong solution to \eqref{maineqnondiv} were already shown in \cite{C71}. However, for the general case of $c \geq 0$ in $U$, \cite{C71} assumes that an $i$th component of $\widehat{\bold{H}}$ is in $L^{\infty}(U)$ for some $1 \leq i \leq d$ to establish the well-posedness of strong solutions. 
By generalizing the continuity condition on the components of $A$ to the $VMO$ condition and considering the case where $\widehat{\bold{H}} = 0$ and $c = 0$, \cite{CFL93} proves the existence and uniqueness of a strong solution using the Calderón-Zygmund kernel.
In \cite{V92}, by exploiting the result in \cite{CFL93} and using a standard approximation argument and the Alexandrov-Pucci maximum principle, the existence and uniqueness of a strong solution to \eqref{maineqnondiv} are verified in case where $\widehat{\bold{H}} \in L^{r}(U, \mathbb{R}^d)$ and $c \in L^{d}(U)$ with $r>d \geq 3$.
Additionally in \cite{V93}, the same author further relaxed the condition on the coefficients in \cite{V92} to the one that $c \in L^{\frac{r}{2}}(U)$ with $r>d \geq 3$. More recently, N.V. Krylov established in \cite{K21} the existence and uniqueness of a strong solution to \eqref{maineqnondiv}, allowing 
$\widehat{\bold{H}} \in L^d(U, \mathbb{R}^d)$ and $c \in L^{\frac{d}{2}}(U)$ with $c \geq 0$ in $U$ if $d \geq 3$ and $c \in L^{s}(U)$ with $s>1$ and $c \geq 0$ in $U$ if $d=2$.
However, in situations where the number of data represented by 
$d$ is very large, the condition $c \in L^{\frac{d}{2}}(U)$ remains still restrictive. Our second main result demonstrates the existence and uniqueness of a strong solution to \eqref{maineqnondiv} for singular zero-order terms
and the precise statement is provided below.
\begin{theo} \label{seconmainres}
Let $U$ be a bounded open subset of $\mathbb{R}^d$ with $d \geq 2$ and $B_r(x_0)$ be an open ball in $\mathbb{R}^d$ with $\overline{U} \subset B_r(x_0)$. Assume that  $A=(a_{ij})_{1 \leq i,j \leq d}$ is a (possibly non-symmetric) matrix of measurable functions on $\mathbb{R}^d$ satisfying \eqref{elliptici}, $a_{ij} \in VMO_{\omega}$ for all $1 \leq i,j \leq d$, ${\rm div}A \in L^p(U, \mathbb{R}^d)$, and let $\widehat{\bold{H}} \in L^p(U, \mathbb{R}^d)$, $c \in L^s(U)$ with $c \geq 0$ in $U$ and $f \in L^q(U)$, where $p \in (d, \infty)$, $s \in (1, \infty)$ and $q \in (\frac{d}{2}, \infty)$.
Let $\beta:= p \wedge s  \wedge q$ and assume that $h \in L^p(U)$, $\tilde{c} \in L^s(U)$ satisfy $\|{\rm div}A + \widehat{\bold{H}} \| \leq h$ 
and $|c| \leq \tilde{c}$ in $U$. Additionally, assume that $\partial U$ is of class $C^{1,1}$.
Then, the following hold:
\begin{itemize}
\item[(i)]
There exists $u \in H^{2, \beta}(U) \cap H^{1,2}_0(U) \cap L^{\infty}(U)$ such that $u$ is a strong solution to \eqref{maineqnondiv}, i.e. $u \in H^{2,1}(U) \cap H^{1,2}_0(U)$ with $cu \in L^1(U)$ satisfies
\begin{equation} \label{strongformiden}
-\text{\rm trace}(A(x) \nabla^2 u(x)) + \langle \widehat{\bold{H}}(x), \nabla u(x) \rangle + c(x) u(x) =f(x), \quad \text{ for a.e. $x \in U$}.
\end{equation}
Moreover, $u$ satisfies the estimates \eqref{estimenbd2}, \eqref{bddestmizte}, and
$$
\|u\|_{H^{2,\beta}(U) } \leq K_{16} \|f\|_{L^q(U)},
$$
where $K_{16} > 0$ is a constant as in Theorem \ref{vmonondivh1p}(i) which depends only on $d$, $\lambda$, $M$, $r$, $p$, $s$, $q$, $\|h\|_{L^p(U)}$, $\|\tilde{c}\|_{L^s(U)}$ and $\omega$.
\item[(ii)]
$u$ in Theorem \ref{seconmainres}(i) is a unique strong solution to \eqref{maineqnondiv}, i.e. if $v \in H^{2,1}(U) \cap H^{1,2}_0(U)$ with $cv \in L^1(U)$ satisfies \eqref{strongformiden} where $u$ is replaced by $v$, then $u = v$ in $U$.
\item[(iii)]
If $\alpha>0$ is a constant, $c \geq \alpha$ in $U$ and $f \in L^{\theta}(U) \cap L^q(U)$ with $\theta \in [1, \infty]$, then
$u$ in Theorem \ref{seconmainres}(i) satisfies \eqref{normalcontrac}.
\end{itemize}
\end{theo}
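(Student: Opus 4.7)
The strategy is to reduce the non-divergence form problem \eqref{maineqnondiv} to the divergence form problem \eqref{maineq2} that is already covered by Theorem \ref{mainexisthm}, and then to invoke the VMO-based Calderón-Zygmund regularity encoded in the preceding Theorem \ref{vmonondivh1p} to upgrade the bounded weak solution to a strong solution in $H^{2,\beta}(U)$.

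First I would use the distributional identity
$$
-\text{trace}(A \nabla^2 u) = -\text{div}(A \nabla u) + \langle \text{div}A, \nabla u \rangle,
$$
which is legitimate precisely because $\text{div}A \in L^p(U, \mathbb{R}^d)$ is part of the hypothesis. Setting $\bold{H} := \text{div}A + \widehat{\bold{H}}$, this rewrites \eqref{maineqnondiv} in divergence form with drift $\bold{H} \in L^p(U, \mathbb{R}^d)$ satisfying $\|\bold{H}\| \leq h$. Since $c \in L^s(U) \subset L^1(U)$ with $c \geq 0$, hypothesis \textbf{(S)} of Theorem \ref{mainexisthm} is satisfied, and that theorem yields $u \in H^{1,2}_0(U) \cap L^{\infty}(U)$ solving the associated weak formulation \eqref{weakdefzerve} together with the estimates \eqref{estimenbd2} and \eqref{bddestmizte}. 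Uniqueness of such a weak solution is also granted by Theorem \ref{mainexisthm}(ii).

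To promote this $u$ to $H^{2,\beta}(U)$ and to identify the pointwise equation \eqref{strongformiden}, I would apply Theorem \ref{vmonondivh1p}(i), exploiting the $VMO_\omega$ regularity of each $a_{ij}$ together with the $C^{1,1}$ regularity of $\partial U$. The zero-order contribution is harmless because $u \in L^\infty$ and $c \in L^s$ give $cu \in L^s \subset L^\beta$; the delicate piece is the drift contribution $\langle \widehat{\bold{H}}, \nabla u \rangle$, which at face value only lives in $L^{2p/(2+p)}$ given $\nabla u \in L^2$ and $\widehat{\bold{H}} \in L^p$. One needs higher integrability of $\nabla u$ to land in $L^\beta$. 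That gradient improvement, followed by the Chiarenza-Frasca-Longo type $W^{2,\beta}$ estimate under VMO diffusion, is exactly what Theorem \ref{vmonondivh1p}(i) is designed to furnish, and it supplies the explicit bound $\|u\|_{H^{2,\beta}(U)} \leq K_{16} \|f\|_{L^q(U)}$.

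For uniqueness (ii), any strong solution $v \in H^{2,1}(U) \cap H^{1,2}_0(U)$ with $cv \in L^1(U)$ satisfying \eqref{strongformiden} becomes, after multiplication by $\psi \in C_0^\infty(U)$ and integration by parts via the identity above, a weak solution of the transformed divergence form equation, so uniqueness is inherited from Theorem \ref{mainexisthm}(ii). For the contraction estimate (iii), the strong solution $u$ is a fortiori the weak solution produced in Step~1, whence \eqref{normalcontrac} is an immediate consequence of Theorem \ref{mainexisthm}(iii). The principal technical obstacle is the gradient higher-integrability bootstrap needed to place $\langle \widehat{\bold{H}}, \nabla u \rangle$ in $L^\beta$; this step, which depends on combining Meyers/Gehring-type self-improvement with the VMO Calderón-Zygmund theory, is precisely what the prerequisite Theorem \ref{vmonondivh1p} is built to deliver, and so the present proof is mainly a matter of checking that the transformed equation fits the hypotheses of that theorem.
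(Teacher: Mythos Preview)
Your proposal is correct and follows essentially the same route as the paper: define $\bold{H}:=\mathrm{div}A+\widehat{\bold{H}}$, apply Theorem~\ref{mainexisthm} to obtain the bounded weak solution with estimates \eqref{estimenbd2}, \eqref{bddestmizte}, \eqref{normalcontrac}, invoke Theorem~\ref{vmonondivh1p}(i) for the $H^{2,\beta}$ upgrade, and pass between strong and weak formulations by integration by parts for both existence and uniqueness. Your remark that the bootstrap for $\langle\widehat{\bold{H}},\nabla u\rangle$ is absorbed into Theorem~\ref{vmonondivh1p}(i) is exactly right; the present proof is indeed just the hypothesis check you describe.
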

\text{}\\
Our second main result relies heavily on our first main result, Theorem \ref{mainexisthm}, and Theorem \ref{vmonondivh1p}(i). Since the unique weak solution in Theorem \ref{mainexisthm} is in $L^{\infty}(U)$, the regularity condition on $c$ is allowed to be $c \in L^s(U)$ with any $s \in (1, \infty)$, which does not seem to be possible in \cite{K21} if $d \geq 3$. However, although the regularity of our zero-order term $c$ is quite low, the condition $f \in L^q(U)$ with $q \in (\frac{d}{2}, \infty)$ is essential in Theorem \ref{seconmainres} to ensure the existence of a bounded weak solution $u$ as in Theorem \ref{mainexisthm}. On the other hand, although restrictive conditions such as $c \in L^{\frac{d}{2}}(U)$ appear in \cite{K21}, $f$ is allowed to belong to $L^r(U)$ with arbitrary $r \in (1, \infty)$. It remains an open question whether our second main result still holds when the conditions $\text{div} A ,\, \bold{H} \in L^p(U, \mathbb{R}^d)$ are relaxed. \\
Finally, let us discuss the novelty of this paper, particularly regarding the constants that appear in our estimates \eqref{estimenbd2}, \eqref{bddestmizte}, and \eqref{normalcontrac}. For instance, under certain regularity assumptions on the coefficients in \eqref{maineq2}, as a consequence of the bounded inverse theorem (see \cite{Br11}), one may show the existence of a constant $\tilde{C}>0$, independent of $f$ and $u$ satisfying that
\begin{equation} \label{bddinvl2esti}
\|u\|_{L^2(U)} \leq \hat{C}\|f\|_{L^2(U)},
\end{equation}
(see  \cite[Section 6.2, Theorem 6]{Ev10}, \cite[Lemma 9.17]{GT01} and \cite[Proof of Theorem 2.5]{HK22}). However, though we can confirm that the constant $\hat{C}>0$ is independent of $f$ and $u$, it is difficult to determine precisely how it depends on the quantities in the equations \eqref{maineq2} and \eqref{maineqnondiv}. For instance, if $\overline{\bold{H}}$ and $\overline{c}$ are sufficiently close to $\bold{H}$ and $c$ in some sense, and $\bold{H}$ and $c$ in the equation \eqref{maineq2}  are replaced by $\overline{\bold{H}}$ and $\overline{c}$, respectively,  it may not be easy to ascertain how the constant $\hat{C}>0$ in \eqref{bddinvl2esti} robustly maintains its value. In contrast, the constants presented in our estimates \eqref{estimenbd2}, \eqref{bddestmizte}, and \eqref{normalcontrac} reveal their concrete dependence on the quantities in the equations \eqref{maineq2} and \eqref{maineqnondiv}, and indeed, these constants can be calculated explicitly.  Furthermore, the constant that appears in the contraction estimates \eqref{normalcontrac} is of the form $\frac{K_1}{\alpha}$, where $K_1>0$ is indeed an upper bound of $\frac{\max_{\overline{U}} \rho}{\min_{\overline{U}} \rho}$ (cf. Remark \ref{imptremaforme}), allowing for the computation of a specific value. From this perspective, our result is distinguished from others in that the constants appearing in the estimates can be concretely calculated. Even when the regularity of the coefficients is very high, our results offer new insights into the explicit constants in the estimates, which can be applied to a posteriori error analysis  (see Remark \ref{posteriana}). \\
This paper is organized as follows: Section \ref{notconv} introduces the primary notations and definitions used throughout this paper. Section \ref{maindivform} presents the fundamental results of this paper, Theorems \ref{existrho}, \ref{mainequivthm}, which lead us to establish the existence and uniqueness of a weak solution to \eqref{maineq2} when the components of $A$ are merely bounded measurable functions. The proof of the first main result is provided after Remark \ref{imptremau}. Section \ref{vmow1pdivse} investigates the existence and uniqueness of a weak solution to \eqref{maineq2} when the components of $A$ belong to the $VMO$ class, and $f$ is replaced by $f-{\rm div}\,\bold{F}$ in the weak sense. In Section \ref{nondivresec}, we prove our second main result after Remark \ref{slightremar} by verifying in Theorem \ref{vmonondivh1p}(i) that the unique weak solution constructed in Theorem \ref{mainexisthm} is indeed twice weakly differentiable under the additional conditions. Appendix \ref{auxilarlse} introduces auxiliary results that are essential to the main content of this paper.

\section{Notations and definitions} \label{notconv}
In this paper, we treat the Euclidean space $\mathbb{R}^d$, equipped with the Euclidean
inner product $\langle \cdot,  \cdot \rangle$ and the Euclidean norm $\| \cdot \|$. For each $x_0 \in \mathbb{R}^d$ and $r>0$, let $B_r(x_0):= \{ x \in \mathbb{R}^d : \|x-x_0 \|<r \}$.
We write $a \wedge b$ = $b \wedge a:= a$ and $a \vee b= b \vee a :=b$ for $a, b \in \mathbb{R}$ with $a \leq b$. For $W \subset \mathbb{R}^d$, $1_{W}$ denotes the indicator function of $W$. The Lebesgue measure is denoted by $dx$ and we write $|E|:=dx(E)$, where $E$ is a Lebesgue measurable set in $\mathbb{R}^d$. Let $U$ be an open subset of $\mathbb{R}^d$. $\mathcal{B}(U)$ denotes the set of all Borel measurable functions on $U$. 
For $\mathcal{A} \subset \mathcal{B}(U)$, $\mathcal{A}_0$ denotes the set of all functions $f \in \mathcal{A}$ such that  $\text{supp}(f\,dx)$ is compact in $U$.
The sets of all continuous functions on $U$ and $\overline{U}$ are denoted by $C(U)$ and $C(\overline{U})$, respectively, and define $C_0(U):=C(U)_0$. Let $k \in \mathbb{N} \cup \{ \infty \}$. The set of all $k$-times continuously differentiable functions is denoted by $C^k(U)$ and define $C^k_0(U):=C^k(U) \cap C_0(U)$. Denote by $C^k(\overline{U})$ the set of all continuous functions $f$ on $\overline{U}$ satisfying that  there exist an open subset $V$ of $\mathbb{R}^d$ with $\overline{U} \subset V$ and $\tilde{f} \in C^k(V)$ such that $\tilde{f}=f$ in $\overline{U}$. Let $r \in [1, \infty]$. Denote by $L^r(U)$ the $L^r$-space on $U$ with respect to $dx$ equipped with usual $L^r$-norm on $U$. $L^r(U, \mathbb{R}^d)$ denotes the space of all $L^r$-vector fields on $U$ with respect to $dx$ equipped with the norm $\| \bold{F} \|_{L^r(U)}:=\big\| \| \bold{F}\| \big\|_{L^r(U)}$. $L^r_{loc}(U)$ denotes the set of all Borel measurable functions $f$ on $U$ satisfying that, $f 1_{W} \in L^r(U)$ for any bounded open subset $W$ of $\mathbb{R}^d$ with $\overline{W} \subset U$. Likewise, denote $L_{loc}^r(U, \mathbb{R}^d)$ the set of all Borel measurable vector fields $\bold{F}$ on $U$ satisfying $\| \bold{F} \| \in L^r_{loc}(U)$. For each $i=1, \ldots, d$ and a function $f$ on $U$,  let $\partial_i f$ be the $i$-th weak partial derivative of $f$ on $U$, if it exists. The Sobolev space $H^{1,r}(U)$ is defined to be the space of all functions $f \in L^r(U)$ for which $\partial_i f \in L^r(U)$ for all $i=1, \ldots d$ equipped with the usual $H^{1,r}(U)$-norm. For each $q \in [1, \infty)$,
$H^{1,q}_0(U)$ denotes the closure of $C_0^{\infty}(U)$ in $H^{1,q}(U)$. Specifically, we write $H_0^{1,2}(U)_b := H^{1,2}_0(U) \cap L^{\infty}(U)$. The Sobolev space $H^{2,r}(U)$ is defined to be the space of all functions $f \in L^r(U)$ for which $\partial_i f \in L^r(U)$ and $\partial_{i} \partial_j f \in L^r(U)$ for each $i, j =1, \ldots d$ equipped with the usual $H^{2,r}(U)$-norm. $\Delta$ denotes the weak Laplacian, defined as $\Delta f:= \sum_{i=1}^d \partial_i \partial_i f$. For a twice weakly differentiable function $f$, $\nabla^2 f$ denotes a weak Hessian matrix. i.e. $\nabla^2 f:= (\partial_i \partial_j f)_{1\leq i,j \leq d}$. For a matrix $B=(b_{ij})_{1 \leq i,j \leq d}$, we write $\text{trace}(B):=\sum_{i=1}^d b_{ii}$. In this paper, $2_{*}$ denotes an arbitrary but fixed number in $(1,2)$ if $d=2$ and $2_{*}:=\frac{2d}{d+2}$ if $d \geq 3$.
\begin{defn}
Let $U$ be an open subset of $\mathbb{R}^d$ with $d\geq 2$. We say that the boundary $\partial U$ is of class $C^1$ (resp. $C^{1,1}$) if for each $x_0 \in \partial U$ there exists $r>0$ and a $C^1$ (resp. $C^{1,1}$)-function $\sigma: \mathbb{R}^{d-1} \rightarrow \mathbb{R}$ such that (under rotating and
relabeling the coordinates axes if necessary) we have
$$
U \cap B_{r}(x_0) = \{ x \in B_r(x_0): x_d >  \sigma(x_1, \ldots, x_{d-1})	\}.
$$
\end{defn}
\centerline{}
\begin{defn} \label{divecdefn}
Let $U$ be an open subset of $\mathbb{R}^d$, $\bold{E}=(e_1, \ldots, e_d) \in L^1_{loc}(U, \mathbb{R}^d)$ and $B=(b_{ij})_{1 \leq i,j \leq d}$ be a (possibly non-symmetric) matrix of functions in $L^1_{loc}(U)$. 
We write ${\rm div} B = \bold{E}$ if
$$
\int_{U} \sum_{i,j=1}^d b_{ij} \partial_i \phi_j\, dx = - \int_{U} \sum_{j=1}^d e_{j} \phi_j dx, \quad \text{ for all } \phi_j \in C_0^{\infty}(U),\;  j=1, \ldots, d.
$$
\end{defn}
\centerline{}
\begin{defn} \label{vmodefn}
Let $\omega$ be a positive continuous function on $[0, \infty)$ with $\omega(0)=0$. $VMO_{\omega}$ denotes the set of all functions $g \in L^1_{loc}(\mathbb{R}^d)$ satisfying that,
$$
\sup_{z \in \mathbb{R}^d, r<R} r^{-2d}\int_{B_r(z)} \int_{B_r(z)} |g(x)-g(y)| dx dy  \leq \omega(R), \quad \text{ for all $R \in (0, \infty)$}.
$$
\end{defn}
\centerline{}
\begin{defn} \label{defnofweaksol}
Let $U$ be a bounded open subset of $\mathbb{R}^d$ and $A=(a_{ij})_{1 \leq i,j \leq d}$ be a (possibly non-symmetric) matrix of bounded and measurable functions on $\mathbb{R}^d$ satisfying \eqref{elliptici}. For $\bold{H} \in L^2(U, \mathbb{R}^d)$, $c \in L^1(U)$, 
$f \in L^1(U)$ and $\bold{F} \in L^2(U, \mathbb{R}^d)$, we say that $u$ is a weak solution to 
\begin{equation} \label{maineq}
\left\{
\begin{alignedat}{2}
-\text{\rm div}(A \nabla u) + \langle \bold{H}, \nabla u \rangle + c u &=f - \text{\rm div} \bold{F} && \quad \mbox{{\rm in}\; $U$}\\
u &= 0 &&\quad \mbox{{\rm on}\; $\partial U$},
\end{alignedat} \right.
\end{equation}
 if $u \in H^{1,2}_0(U)$ with $cu \in L^1(U)$
satisfies
\begin{equation*}
\int_U \langle A \nabla u, \nabla \psi \rangle + \langle \bold{H}, \nabla u \rangle \psi   + cu \psi dx = \int_U f \psi +\langle \bold{F}, \nabla \psi \rangle dx, \quad \text{ for all } \psi \in C_0^{\infty}(U).
\end{equation*}
\end{defn}
\centerline{}
\begin{defn} \label{defnstrongsol}
Let $U$ be a bounded open subset of $\mathbb{R}^d$, $A=(a_{ij})_{1 \leq i,j \leq d}$ be a (possibly non-symmetric) matrix of bounded and measurable functions on $\mathbb{R}^d$ satisfying \eqref{elliptici}.
Let \,$\widehat{\bold{H}}=(\hat{h}_1, \ldots, \hat{h}_d) \in L^2(U, \mathbb{R}^d)$, $c \in L^1(U)$ and $f \in L^1(U)$. Here, $u$ is called a strong solution to \eqref{maineqnondiv}
if $u \in H^{2, 1} (U)\cap H^{1,2}_0(U)$ with $cu \in L^1(U)$ satisfies \eqref{strongformiden},
i.e.
$$
-\sum_{i,j=1}^d a_{ij}(x) \partial_{i} \partial_j u(x) + \sum_{i=1}^d \hat{h}_i(x) \partial_i u(x) + c(x) u(x)  = f(x), \quad \text{ for a.e. $x \in U$}.
$$
\end{defn}

\section{Divergence type equations with measurable $A$} \label{maindivform}
\subsection{The case of $d \geq 2$, $c \in L^1(U)$, $f \in L^{q}(U)$ with $q \in (\frac{d}{2}, \infty)$ and  $\bold{F}= 0$ }

\noindent
Theorem \ref{existrho} presented below are key components of this paper and are essential for transforming the drift vector field $\bold{H}$ in \eqref{maineq} into the divergence-free vector field $\rho \bold{B}$ in Theorem \ref{mainequivthm}. The primary approach underlying these results is based on the weak maximum principle in \cite[Theorem 4]{T77} and the Harnack inequality in \cite[Theorem 4]{AS67} (cf. \cite[Corollary 5.5]{T73}). The main idea of the proof was first introduced in \cite[Theorem 1]{BRS12}  (cf. \cite[Theorem 2.2.1]{BKRS15}) and later generalized in \cite[Theorem 2.27(i)]{LST22}.
\begin{theo} \label{existrho}
Assume that {\bf (S)} holds. Then, the following hold:
\begin{itemize}
\item[(i)]
Let $x_1 \in U$. Then, there exists $\rho \in H^{1,2}(B_{4r}(x_0)) \cap C(B_{4r}(x_0))$ with $\rho(x)>0$ for all $x \in B_{4r}(x_0)$ and $\rho(x_1)=1$ such that
\begin{equation} \label{infinva}
\int_{B_{4r}(x_0)} \langle A^T \nabla \rho  + \rho \bold{H}, \nabla \varphi \rangle dx = 0, \quad \text{ for all $\varphi \in C_0^{\infty}(B_{4r}(x_0))$}.
\end{equation}
\item[(ii)]
Let $\rho$ be as in Theorem \ref{existrho}(i). Then, there exists a constant $K_1 \geq 1$ which only depend on $d$, $\lambda$, $M$, $r$, $p$, $\|h\|_{L^p(U)}$ such that \;$\max_{\overline{B}_{3r}(x_0)} \rho  \leq K_1 \min_{\overline{B}_{3r}(x_0)} \rho$, and hence
$$
1 \leq \max_{\overline{U}} \rho  \leq K_1 \min_{\overline{U}} \rho \leq K_1.
$$
\item[(iii)]
Let $\rho$ be as in Theorem \ref{existrho}(i) and $K_1 \geq 1$ be a constant as in Theorem \ref{existrho}(ii). Then, it holds that
$$
\| \nabla \rho \|_{L^2(B_{2r}(x_0))} \leq K_1 K_2,
$$
where 
$\displaystyle K_2:= \left( \frac{8d^2M^2}{\lambda^2 r^2} |B_{3r}(x_0)|	 + \frac{2}{\lambda^2} |B_{3r}(x_0)|^{1-\frac{2}{p}} \|h\|^2_{L^p(U)}  + \frac{2}{r \lambda} |B_{3r}(x_0)|^{1-\frac{1}{p}} \|h\|_{L^p(U)} \right)^{1/2}$.
\end{itemize}
\end{theo}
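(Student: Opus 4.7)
The plan is to handle the three parts in order, with (ii) and (iii) following from (i) via the Harnack inequality and a Caccioppoli estimate, respectively.

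For part (i), I would construct $\rho$ by approximation. Mollify $\mathbf{H}$ to obtain smooth vector fields $\mathbf{H}_n \to \mathbf{H}$ in $L^p(B_{4r}(x_0), \mathbb{R}^d)$, with $\|\mathbf{H}_n\| \leq h_n$ for smooth $h_n$ satisfying $\limsup_n \|h_n\|_{L^p} \leq \|h\|_{L^p(U)}$. For each $n$, classical elliptic theory for smooth coefficients yields a smooth positive solution $\tilde{\rho}_n$ of $-\text{div}(A^T\nabla \tilde{\rho}_n + \tilde{\rho}_n \mathbf{H}_n) = 0$ in $B_{4r}(x_0)$ with $\tilde{\rho}_n = 1$ on $\partial B_{4r}(x_0)$; positivity follows from the fact that this operator is the formal adjoint of a uniformly elliptic operator whose Dirichlet semigroup is positivity-preserving, so positive boundary data yields a positive solution. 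Set $\rho_n := \tilde{\rho}_n/\tilde{\rho}_n(x_1)$. Applying the Aronson--Serrin Harnack inequality (\cite[Theorem 4]{AS67}) uniformly in $n$ yields a uniform upper bound $\sup_{B_{3r}(x_0)} \rho_n \leq K_1$ as well as uniform local H\"older continuity via De Giorgi--Nash--Moser. Arzel\`a--Ascoli then extracts a subsequence converging locally uniformly and weakly in $H^{1,2}(B_{4r}(x_0))$ to a continuous limit $\rho$ satisfying \eqref{infinva} with $\rho(x_1)=1$, and a final application of Harnack gives $\rho>0$ everywhere.

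For part (ii), I would apply the Harnack inequality of Aronson--Serrin / Trudinger (\cite[Theorem 4]{AS67}, \cite[Corollary 5.5]{T73}) directly to the non-negative continuous weak solution $\rho$ of \eqref{infinva}. Since $\|\mathbf{H}\| \leq h \in L^p(U)$ with $p > d$, the Harnack constant $K_1$ is explicitly controllable in terms of $d$, $\lambda$, $M$, $r$, $p$, $\|h\|_{L^p(U)}$. Combined with $\overline{U} \subset B_r(x_0) \subset \overline{B_{3r}(x_0)}$ and $\rho(x_1) = 1$, the chain $1 \leq \max_{\overline{U}}\rho \leq K_1 \min_{\overline{U}}\rho \leq K_1$ follows immediately, since $\min_{\overline{B_{3r}(x_0)}}\rho \leq \rho(x_1) = 1$.

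For part (iii), the plan is a standard Caccioppoli-type energy estimate. Fix $\eta \in C_0^{\infty}(B_{3r}(x_0))$ with $0 \leq \eta \leq 1$, $\eta \equiv 1$ on $B_{2r}(x_0)$, and $\|\nabla\eta\| \leq 1/r$. By (ii), $\rho \leq K_1$ on $\overline{B_{3r}(x_0)}$, so $\eta^2 \rho \in H^{1,2}_0(B_{3r}(x_0))$ is admissible as a test function in \eqref{infinva} by density. Expanding $\nabla(\eta^2 \rho) = \eta^2 \nabla\rho + 2\eta\rho\nabla\eta$, invoking the ellipticity $\langle A^T\nabla\rho, \nabla\rho\rangle \geq \lambda\|\nabla\rho\|^2$, the bounds $|a_{ij}| \leq M$ and $\|\mathbf{H}\| \leq h$, and Young's inequality to absorb the $\|\nabla\rho\|^2$-terms on the right-hand side, followed by H\"older's inequality to convert $L^p$-norms of $h$ on $B_{3r}(x_0)$ into $L^2$-norms, produces exactly the three terms in $K_2^2$. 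Restricting the resulting integral to $B_{2r}(x_0)$, where $\eta \equiv 1$, then yields the desired bound $\|\nabla \rho\|_{L^2(B_{2r}(x_0))} \leq K_1 K_2$.

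The main technical obstacle is (i), specifically obtaining existence together with strict positivity under the low-regularity drift condition $\mathbf{H} \in L^p$, $p > d$, while tracking constants carefully enough for (ii) and (iii) to hold with the explicit dependencies claimed. Once the positive continuous adjoint solution $\rho$ is in hand, parts (ii) and (iii) are essentially routine applications of classical Harnack and Caccioppoli estimates.
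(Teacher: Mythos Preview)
Your plans for (ii) and (iii) match the paper's proof essentially verbatim: Harnack via \cite[Theorem 4]{AS67} for (ii), and the Caccioppoli estimate with test function $\eta^2\rho$ (the paper uses a Lipschitz cutoff $\chi$) for (iii).

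For (i), however, your approximation route differs from the paper and has a gap. You mollify $\mathbf{H}$ and then invoke ``classical elliptic theory for smooth coefficients'' to produce smooth positive $\tilde{\rho}_n$. But under assumption \textbf{(S)} the matrix $A$ is only bounded measurable; mollifying $\mathbf{H}$ alone does not make the principal part smooth, so no classical (pointwise) solution theory applies, and your positivity argument via ``the Dirichlet semigroup of the adjoint is positivity-preserving'' is not available at that regularity. You could mollify $A$ as well, but then you must verify that the limit still lies in $H^{1,2}(B_{4r}(x_0))$ on the \emph{full} ball, which your interior Harnack/Arzel\`a--Ascoli argument does not provide; you would need separate global energy control of $\tilde{\rho}_n-1 \in H^{1,2}_0(B_{4r}(x_0))$ together with a uniform lower bound on $\tilde{\rho}_n(x_1)$, neither of which you justify.

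The paper avoids all of this by a direct construction: extend $\mathbf{H}$ by zero, solve (via \cite[Lemma 2.25]{LST22} and De Giorgi--Nash--Moser regularity) the \emph{inhomogeneous} problem
\[
\int_{B_{4r}(x_0)} \langle A^T\nabla v + v\mathbf{H}, \nabla\varphi\rangle\,dx = \int_{B_{4r}(x_0)} \langle -\mathbf{H}, \nabla\varphi\rangle\,dx
\]
for $v \in H^{1,2}_0(B_{4r}(x_0)) \cap C(B_{4r}(x_0))$, and set $w:=v+1$. Then $w$ solves the homogeneous adjoint equation, and $w\ge 0$ follows from the weak maximum principle \cite[Theorem 4]{T77} applied to $w^-$ (which lies in $H^{1,2}_0$ by a sandwich argument, since $0\le w^-\le v^-$). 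Strict positivity $w>0$ then follows from Harnack by a connectedness argument: if $w(y_0)=0$ for some $y_0$, Harnack forces $w\equiv 0$ on every ball containing $y_0$, contradicting the boundary trace $T(w)=1$. Finally $\rho := w/w(x_1)$. This gives $\rho \in H^{1,2}(B_{4r}(x_0))$ immediately from $v \in H^{1,2}_0$, with no approximation or boundary compactness issues.
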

\begin{proof}
(i) Extend $\bold{H}$ and $h$ on $\mathbb{R}^d$ by the zero-extension, and say again $\bold{H}$ and $h$, respectively.
By \cite[Lemma 2.25]{LST22} and \cite[Corollary 5.5]{T73} there exists $v \in H^{1,2}_0(B_{4r}(x_0)) \cap C(B_{4r}(x_0))$ such that
$$
\int_{B_{4r}(x_0)} \langle A^T \nabla v + v \bold{H}, \nabla \varphi   \rangle dx = \int_{B_{4r}(x_0)} \langle - \bold{H}, \nabla \varphi \rangle dx, \quad \text{ for all $\varphi \in C_0^{\infty}(B_{4r}(x_0))$.}
$$
Let $w:=v+1 \in H^{1,2}(B_{4r}(x_0)) \cap C(B_{4r}(x_0))$. Let $T: H^{1,2}(B_{4r}(x_0)) \rightarrow L^2(\partial B_{4r}(x_0))$ be the trace operator defined as in \cite[Theorem 4.6]{EG15}.
Then, we get
\begin{equation} \label{traceres}
T(w)=1 \; \text{ in } \; L^2(\partial B_{4r}(x_0)),
\end{equation}
since $T$ is a linear operator and $T(v)=0$ in  $L^2(\partial B_{4r}(x_0))$.
Now, we have
\begin{equation} \label{originexist}
\int_{B_{4r}(x_0)} \langle A^T \nabla w+ w \bold{H}, \nabla \varphi   \rangle dx = 0, \quad \text{ for all $\varphi \in C_0^{\infty}(B_{4r}(x_0))$}.
\end{equation}
Observe that $0 \leq w^- \leq v^-$ and $v^- \in H^{1,2}_0(B_{4r}(x_0))$, so that $w^- \in H^{1,2}_0(B_{4r}(x_0))$ by Proposition \ref{sandwich}.  Moreover, by \cite[Lemma 3.4]{LT21}, we have
$$
\int_{B_{4r}(x_0)} \langle A^T \nabla w^- + w^- \, \bold{H}, \nabla \varphi   \rangle dx \leq 0, \quad \text{ for all $\varphi \in C_0^{\infty}(B_{4r}(x_0))$ \,with\, $\varphi \geq 0$}.
$$
Using the weak maximum principle, \cite[Theorem 4]{T77} (cf. \cite[Theorem 2.1.8]{BKRS15}), we get $w^- \leq 0$ in $B_{4r}(x_0)$ and hence $w \geq 0$ in $B_{4r}(x_0)$. Here, we emphasize the fact that $w \in C(B_{4r}(x_0))$. Now we claim that $w(x)>0$ for all $x \in B_{4r}(x_0)$. To show the claim, use the proof by contradiction.
Suppose that $w(y_0)=0$ for some $y_0 \in B_{4r}(x_0)$. Then, by the Harnack inequality \cite[Corollary 5.5]{T73},  
one can easily show that $w=0$ on $B_R(x_0)$ for any $R \in (\|y_0-x_0\|, 4r)$. Since $R$ is arbitrary, $w=0$ on $B_{4r}(x_0)$. Thus, $T(w)=0$ in $L^2(\partial B_{4r}(x_0))$, which contradicts to \eqref{traceres}. Thus, we conclude that $w(x)>0$ for all $x \in B_{4r}(x_0)$.  Since $w(x_1)>0$, we can define
$$
\rho(x):= \frac{1}{w(x_1)} w(x), \quad x \in B_{4r}(x_0).
$$
Then, (i) follows.
\\ \\
(ii) Using the Harnack inequality, \cite[Theorem 4]{AS67} 
(Note:  \cite[Theorem 4]{AS67} is a parabolic Harnack inequality, but is still applicable to our elliptic problem \eqref{originexist}. The reason we use \cite[Theorem 4]{AS67} is that the constant provided therein can be identified more explicitly than in \cite[Corollary 5.5]{T73}. Specifically, we want the constant \( K_1 > 0 \) to depend on \( \|h\|_{L^p(U)} \) rather than \( \|\bold{H}\|_{L^p(U)} \), as this plays a crucial role in performing approximation arguments.), there exists $K_1>0$ which only depends on $d$, $\lambda, M, r, p$ and $\|h\|_{L^p(U)}$ such that
\begin{equation} \label{harnaell}
\max_{\overline{B}_{3r}(x_0)} \rho \leq K_1 \min_{\overline{B}_{3r}(x_0)} \rho.
\end{equation}
Since $\max_{\overline{U}} \rho \leq \max_{\overline{B}_{3r}(x_0)} \rho$ and $\min_{\overline{B}_{3r}(x_0)} \rho 
\leq \min_{\overline{U}} \rho$, the assertion follows. \\ \\
(iii)  Let $\chi \in  H^{1,\infty}(\mathbb{R}^d)_0 \cap H^{1,2}_0(B_{3r}(x_0))$ be defined by
\begin{equation*} \label{bumpftn}
\chi(x) := \min \left( \left(3-\frac{\|x-x_0\|}{r}\right)^+, \;1 \right), \quad x \in \mathbb{R}^d.
\end{equation*}
Then, it follows that $\chi(x)=1$ for all $x \in \overline{B}_{2r}(x_0)$, $|\chi(x)| \leq 1$ and $\| \nabla \chi(x) \| \leq \frac{1}{r}$ for a.e. $x  \in \mathbb{R}^d$.
Replacing $\varphi$ in \eqref{originexist} by $\chi^2 \rho$, we hence get
$$
\int_{B_{3r}(x_0)} \langle A^T \nabla \rho + \rho \bold{H}, \, \chi^2 \nabla \rho + 2\rho \chi \nabla \chi \rangle \,dx = 0,
$$
so that
$$
\int_{B_{3r}(x_0)} \langle \chi^2 A^T \nabla \rho, \nabla \rho \rangle\, dx = - \int_{B_{3r}(x_0)} \langle A^T \nabla \rho, 2\rho \chi \nabla \chi \rangle dx  -\int_{B_{3r}(x_0)}  \langle \rho \bold{H}, \chi^2 \nabla \rho \rangle dx -\int_{B_{3r}(x_0)} \langle \rho \bold{H}, 2\rho \chi \nabla \chi \rangle dx.
$$
Thus, \eqref{elliptici} and the above yield that
\begin{align*}
\int_{B_{3r}(x_0)} \lambda \chi^2 \| \nabla  \rho \|^2 dx &\leq \int_{B_{3r}(x_0)} dM \| \chi\nabla \rho \| \cdot \| 2\rho  \nabla \chi \| dx  + \int_{B_{3r}(x_0)} \| \chi \nabla \rho \| \cdot \| \chi \rho \bold{H} \| dx \\
& \qquad + \int_{B_{3r}(x_0)} \rho^2 \|\bold{H}\| \cdot \| \chi \nabla \chi \| dx \\
& \leq \int_{B_{3r}(x_0)} \| \chi \nabla \rho \| \left(  \frac{2dM}{r}	 + h \right) |\rho| dx + \frac{1}{r}\int_{B_{3r}(x_0)} \rho^2 h\, dx \\
& \leq \int_{B_{3r}(x_0)}\frac{\lambda}{2} \chi^2 \| \nabla \rho \|^2 dx  + \int_{B_{3r}(x_0)} \left( \frac{1}{2\lambda} \left(  \frac{2dM}{r}	 + h \right)^2 + \frac{1}{r} h  \right) \rho^2  dx.
\end{align*}
Therefore,  \eqref{harnaell} implies that
\begin{align*}
\int_{U} \|\nabla \rho \|^2 dx &\leq \int_{B_{3r}(x_0)} \chi^2 \| \nabla \rho \|^2 dx \\
& \leq \int_{B_{3r}(x_0)} \left( \frac{1}{\lambda^2} \left(  \frac{2dM}{r}	 +h \right)^2 + \frac{2}{r \lambda} h  \right) \rho^2  dx \\
& \leq \left( \max_{B_{3r}(x_0)} \rho \right)^2   \int_{B_{3r}(x_0)} \frac{1}{\lambda^2}  \left(  \frac{2dM}{r}	 + h \right)^2 + \frac{2}{r \lambda} h   dx \\
& \leq K_1^2 \left( \min_{B_{3r}(x_0)} \rho \right)^2 \int_{B_{3r}(x_0)} \frac{1}{\lambda^2} \left(  \frac{2dM}{r}	 + h \right)^2 + \frac{2}{r \lambda} h  dx \\
& \leq K_1^2  K_2^2,
\end{align*}
as desired.
\end{proof}

\begin{theo} \label{mainequivthm}
Assume that {\bf (S)} holds.
Let $\rho \in H^{1,2}(U) \cap C(\overline{U})$ be a strictly positive function on $\overline{U}$ constructed as in Theorem \ref{existrho}.
Define 
$$
\bold{B}:=\bold{H}+\frac{1}{\rho} A^T \nabla \rho \quad \text{on $U$}.
$$
Then, $\rho \bold{B} \in L^2(U, \mathbb{R}^d)$ and 
$$
\int_{U} \langle \rho \bold{B}, \nabla \varphi \rangle dx = 0, \quad \text{ for all $\varphi \in C_0^{\infty}(U)$}.
$$
Let $f \in L^1(U)$, $\bold{F} \in L^2(U, \mathbb{R}^d)$ and $u \in H^{1,2}_0(U)$ with $cu \in L^1(U)$. Then the following (i) and (ii) are equivalent.
\begin{itemize}
\item[(i)]
\begin{equation} \label{maineqfir}
\int_U \langle A \nabla u, \nabla \psi \rangle +\langle \bold{H}, \nabla u \rangle \psi  + cu \psi dx = \int_U f \psi + \langle \bold{F}, \nabla \psi \rangle dx, \quad \text{ for all } \psi \in C_0^{\infty}(U).
\end{equation}
\item[(ii)]
\begin{equation} \label{maineqfdiv}
\int_U \langle \rho A \nabla u, \nabla \varphi \rangle   +  \langle \rho \bold{B}, \nabla u \rangle \varphi   + \rho c u \varphi dx = \int_U f \rho \varphi   + \langle \bold{F}, \nabla (\rho \varphi) \rangle dx, \quad \text{ for all } \varphi \in C_0^{\infty}(U).
\end{equation}
\end{itemize}
\end{theo}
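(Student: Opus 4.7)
The argument has three tasks: (a) the $L^2$-integrability of $\rho\bold{B}$, (b) the divergence-free identity $\int_U\langle\rho\bold{B},\nabla\varphi\rangle\,dx=0$ on $U$, and (c) the equivalence of \eqref{maineqfir} and \eqref{maineqfdiv}. For (a), I would write $\rho\bold{B}=\rho\bold{H}+A^T\nabla\rho$. Since $\rho\in C(\overline{U})$ is bounded on $\overline U$ and $\bold{H}\in L^p(U,\mathbb{R}^d)\subset L^2(U,\mathbb{R}^d)$ (bounded $U$ with $p>d\ge 2$), the first summand lies in $L^2(U,\mathbb{R}^d)$. For the second summand, $A$ is bounded by \eqref{elliptici} and $\nabla\rho\in L^2(B_{2r}(x_0),\mathbb{R}^d)$ by Theorem~\ref{existrho}(iii) together with $\overline{U}\subset B_r(x_0)\subset B_{2r}(x_0)$, so it also lies in $L^2(U,\mathbb{R}^d)$. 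For (b), given $\varphi\in C_0^\infty(U)$, its zero extension lies in $C_0^\infty(B_{4r}(x_0))$, and \eqref{infinva} gives the identity because $\operatorname{supp}(\varphi)\subset U$.

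For (c) the key preparatory step is to extend both \eqref{maineqfir} and \eqref{maineqfdiv} from the test-function class $C_0^\infty(U)$ to the larger class $H^{1,2}_0(U)\cap L^\infty(U)$ by a density/approximation argument. Given $\psi\in H^{1,2}_0(U)\cap L^\infty(U)$, I would choose $\psi_n\in C_0^\infty(U)$ with $\psi_n\to\psi$ in $H^{1,2}_0(U)$, $\psi_n\to\psi$ a.e., and $\sup_n\|\psi_n\|_{L^\infty(U)}\le\|\psi\|_{L^\infty(U)}$ (obtainable by composing a standard mollified approximation with a smooth bounded truncation). Then $L^2$-convergence of $\nabla\psi_n$ handles the terms involving $A\nabla u$ and $\bold{F}$, while the uniform $L^\infty$-bound together with dominated convergence handles $\int\langle\bold{H},\nabla u\rangle\psi_n\,dx$, $\int cu\psi_n\,dx$, and $\int f\psi_n\,dx$, using $\langle\bold{H},\nabla u\rangle,\,cu,\,f\in L^1(U)$. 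The same recipe extends \eqref{maineqfdiv}, noting that $\nabla(\rho\varphi)=\rho\nabla\varphi+\varphi\nabla\rho\in L^2(U,\mathbb{R}^d)$ for $\varphi\in H^{1,2}_0(U)\cap L^\infty(U)$.

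With this extension in hand, the equivalence is algebraic. For \textbf{(i)$\Rightarrow$(ii)}, given $\varphi\in C_0^\infty(U)$, the product $\rho\varphi$ has compact support in $U$, is bounded, and satisfies $\nabla(\rho\varphi)=\rho\nabla\varphi+\varphi\nabla\rho\in L^2(U,\mathbb{R}^d)$, so $\rho\varphi\in H^{1,2}_0(U)\cap L^\infty(U)$. Substituting $\psi=\rho\varphi$ into the extended \eqref{maineqfir}, expanding by the Leibniz rule, and invoking the key identity
\begin{equation*}
\varphi\langle A\nabla u,\nabla\rho\rangle+\rho\varphi\langle\bold{H},\nabla u\rangle=\varphi\langle A^T\nabla\rho+\rho\bold{H},\nabla u\rangle=\varphi\langle\rho\bold{B},\nabla u\rangle
\end{equation*}
produces \eqref{maineqfdiv}. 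For \textbf{(ii)$\Rightarrow$(i)}, for $\psi\in C_0^\infty(U)$ the quotient $\varphi:=\psi/\rho$ lies in $H^{1,2}_0(U)\cap L^\infty(U)$ (since $\inf_{\overline{U}}\rho>0$ by continuity and strict positivity) with $\nabla\varphi=\nabla\psi/\rho-\psi\nabla\rho/\rho^2$, so it is admissible in the extended \eqref{maineqfdiv}. The computation $\rho\nabla\varphi=\nabla\psi-\varphi\nabla\rho$ then shows that the term $\varphi\langle A\nabla u,\nabla\rho\rangle$ cancels between $\rho\langle A\nabla u,\nabla\varphi\rangle$ and the $A^T\nabla\rho$ part of $\langle\rho\bold{B},\nabla u\rangle\varphi$, recovering \eqref{maineqfir}.

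The main obstacle I anticipate is the density step underlying the test-function extension: one must produce $\psi_n\in C_0^\infty(U)$ converging in $H^{1,2}_0(U)$ to $\psi\in H^{1,2}_0(U)\cap L^\infty(U)$ while keeping $\|\psi_n\|_{L^\infty(U)}$ uniformly controlled, because without this uniform bound one cannot apply dominated convergence to the $L^1$-coupled terms $cu\psi$, $f\psi$, and $\langle\bold{H},\nabla u\rangle\psi$. Once this approximation is secured, the rest of the argument is the Leibniz rule and the definition of $\bold{B}$.
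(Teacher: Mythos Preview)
Your proposal is correct and matches the paper's proof essentially line for line: the paper likewise extends \eqref{maineqfir} and \eqref{maineqfdiv} to test functions in $H^{1,2}_0(U)_b$ via an approximation (Proposition~\ref{propapprobdd}, which provides exactly the uniformly $L^\infty$-bounded $C_0^\infty$-sequence you describe), then substitutes $\psi=\rho\varphi$ and $\varphi=\psi/\rho$ respectively and expands using the Leibniz rule and the definition of $\bold{B}$. Your identification of the main obstacle---the need for a uniform $L^\infty$ bound on the approximating sequence to handle the $L^1$-coupled terms by dominated convergence---is precisely the point addressed by that proposition.
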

\begin{proof}
(i) $\Rightarrow$ (ii). By an approximation as in Proposition \ref{propapprobdd}, \eqref{maineqfir} holds where \lq\lq for all $\psi \in C_0^{\infty}(\mathbb{R}^d)$\rq\rq \; is replaced by 
 \lq\lq for all $\psi \in H^{1,2}_0(U)_b$\rq\rq. 
 Let $\varphi \in C_0^{\infty}(U)$ be arbitrarily given. Replacing $\psi$ by $\rho \varphi \in H^{1,2}_0(U)_b$ in \eqref{maineqfir}, we discover that
 \begin{align*}
&\int_U f \varphi \rho  + \langle \bold{F}, \nabla (\rho \psi) \rangle dx = \int_U \langle A \nabla u, \nabla (\rho \varphi) \rangle  +  \langle \bold{H}, \nabla u \rangle \rho \varphi    +  cu \rho \varphi dx \\
 &= \int_{U} \langle \rho A \nabla u, \nabla  \varphi \rangle   + \left \langle \frac{1}{\rho}A^T \nabla \rho + \bold{H}, \nabla u \right \rangle \rho \varphi   + \rho cu \varphi  dx \\
 &= \int_U \langle \rho A \nabla u, \nabla \varphi \rangle + \langle \rho \bold{B}, \nabla u \rangle \varphi +  \rho cu \varphi dx.
 \end{align*}
(ii) $\Rightarrow$ (i).
By an approximation as in Proposition \ref{propapprobdd}, \eqref{maineqfdiv} holds where \lq\lq for all $\varphi \in C_0^{\infty}(\mathbb{R}^d)$\rq\rq\;is replaced by 
 \lq\lq for all $\varphi \in H^{1,2}_0(U)_b$\rq\rq. Let $\psi \in C_0^{\infty}(U)$ be arbitrarily given. Replacing $\varphi$ by $\frac{\psi}{\rho} \in H^{1,2}_0(U)_b$ in \eqref{maineqfdiv}, we discover that
 \begin{align*}
 &\int_{U} f \psi  +  \langle \bold{F}, \nabla \psi \rangle dx = \int_U \left \langle \rho A \nabla u, \nabla \left(  \frac{\psi}{\rho} \right) \right\rangle  + \langle \bold{B}, \nabla u \rangle \psi   +cu \psi dx\\
 & = \int_{U} \langle A \nabla u, \nabla \psi \rangle dx + \int_{U} \left \langle -\frac{1}{\rho}A^T \nabla \rho + \bold{B}, \nabla u \right \rangle  \psi + cu \psi dx  \\
 & = \int_U \langle A \nabla u, \nabla \psi \rangle + \langle \bold{H}, \nabla u \rangle \psi  +  cu \psi dx.
 \end{align*}
\end{proof}

\begin{theo} \label{theomaineun}
Let $\hat{\lambda}>0$ be a constant and $\hat{A}=(\hat{a}_{ij})_{1 \leq i,j \leq d}$ be a matrix of bounded and measurable functions on $\mathbb{R}^d$ such that
\begin{equation*} \label{ellipticiahat}
\langle \hat{A}(x) \xi, \xi \rangle \geq \hat{\lambda} \| \xi \|^2, \quad \text{for a.e. $x \in \mathbb{R}^d$ and} \text{ for all } \xi \in \mathbb{R}^d.
\end{equation*}
Let $\hat{\bold{B}} \in L^2(U, \mathbb{R}^d)$ be a vector field such that
\begin{equation} \label{weakdivfree} 
\int_{U} \langle \hat{\bold{B}}, \nabla \varphi \rangle dx = 0, \quad \text{ for all $\varphi \in C_0^{\infty}(U)$}.
\end{equation}
Let $\hat{c} \in L^1(U)$ with $\hat{c} \geq 0$ in $U$, $\hat{f} \in L^{q}(U)$ and $\hat{\bold{F}} \in L^{2q}(U, \mathbb{R}^d)$ with $q \in (\frac{d}{2}, \infty)$.
Then, the following hold:
\begin{itemize}
\item[(i)]
There exists a weak solution $\hat{u} \in H^{1,2}_0(U)_b$ to
\begin{equation} \label{mainenerbd}
\left\{
\begin{alignedat}{2}
-{\rm div}(\hat{A} \nabla \hat{u}) + \langle \hat{\bold{B}}, \nabla \hat{u} \rangle + \hat{c} \hat{u} &=\hat{f} - {\rm div} \hat{\bold{F}} && \quad \mbox{\;{\rm in} \; $U$}\\
\hat{u} &= 0 &&\quad \mbox{\;{\rm on}\; $\partial U$},
\end{alignedat} \right.
\end{equation}
i.e. $\hat{u} \in H^{1,2}_0(U)$ with $\hat{c} \hat{u} \in L^1(U)$ satisfies
\begin{equation} \label{zeroforc}
\int_U \langle  \hat{A} \nabla \hat{u}, \nabla \varphi \rangle  + \langle \hat{\bold{B}}, \nabla \hat{u} \rangle \varphi  +  \hat{c}\hat{u} \varphi dx  = \int_{U} \hat{f} \varphi  dx + \int_{U} \langle \hat{\bold{F}}, \nabla \varphi \rangle dx, \quad \text{ for all $\varphi \in C_0^{\infty}(U)$}.
\end{equation}
Moreover, it holds that
\begin{align} 
\|\hat{u}\|_{H_0^{1,2}(U)}  &\leq  \hat{K}_3 \left( \|f\|_{L^{2_*}(U)} + \|\hat{\bold{F}} \|_{L^2(U)}	\right),	\label{estimenbd}  \\	
\qquad \|\hat{u}\|_{L^{\infty}(U)} &\leq  \hat{K}_4 \left( \|f\|_{L^q(U)} + \| \hat{\bold{F}}\|_{L^{2q}(U)} \right), 	\label{bddestimdivf}
\end{align}
where 
$\hat{K}_3 >0$ is a constant which only depends on $d$, $\hat{\lambda}$ and $|U|$, and $\hat{K}_4>0$ is a constant which only depends on $d$, $\hat{\lambda}$, $q$ and $|U|$.
\item[(ii)] Let $\hat{v} \in H_0^{1,2}(U)$ be such that $\hat{c} \hat{v} \in L^1(U)$ and that
\begin{equation} \label{zeroforcdu}
\int_U \langle \hat{A} \nabla \hat{v}, \nabla \varphi \rangle +  \langle \hat{\bold{B}}, \nabla \hat{v} \rangle \varphi  + \hat{c} \hat{v} \varphi dx  = 0, \quad \text{ for all $\varphi \in C_0^{\infty}(U)$}.
\end{equation}
Then, $\hat{v}=0$ in $U$, which concludes from the linearity that $\hat{u}$ in Theorem \ref{theomaineun}(i) is a unique weak solution to \eqref{mainenerbd}.
\item[(iii)]
Let $\alpha>0$ and $\theta \in [1, \infty]$ be constants and assume that $\hat{c}\geq \alpha$, $\hat{f} \in L^{\theta}(U) \cap L^q(U)$. Then, $\hat{u}$ in Theorem \ref{theomaineun}(i) satisfies
$$
\|\hat{u} \|_{L^{\theta}(U)} \leq \frac{1}{\alpha} \| \hat{f} \|_{L^{\theta}(U)} +|U|^{1/2} \hat{K}_4 \| \bold{F} \|_{L^{2q}(U)},
$$
where $\hat{K}_4>0$ is a constant as in Theorem \ref{theomaineun}(i).
\end{itemize}
\end{theo}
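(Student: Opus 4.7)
The plan is to build $\hat{u}$ by cutting off the zero-order term, $\hat{c}_n:=\hat{c}\wedge n\in L^\infty(U)$, and use the weak divergence-freeness of $\hat{\bold{B}}$ to make every energy identity behave as if there were no drift, then pass $n\to\infty$.

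For (i), the bilinear form $a_n(u,v)=\int_U \langle \hat{A}\nabla u,\nabla v\rangle + \langle \hat{\bold{B}},\nabla u\rangle v + \hat{c}_n u v\,dx$ is continuous on $H^{1,2}_0(U)$, and for $v\in C_0^\infty(U)$ the drift piece equals $\tfrac12\int\langle\hat{\bold{B}},\nabla v^2\rangle dx=0$ by \eqref{weakdivfree}; density extends this identity to $H^{1,2}_0(U)$, yielding coercivity $a_n(v,v)\geq \hat{\lambda}\|\nabla v\|_{L^2}^2$, so Lax--Milgram with Poincar\'e gives $\hat{u}_n\in H^{1,2}_0(U)$ solving the equation with $\hat{c}_n$. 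Testing with $\hat{u}_n$ itself and using div-freeness once more produces the estimate \eqref{estimenbd} with constants depending only on $d,\hat{\lambda},|U|$, uniformly in $n$ because $\int\hat{c}_n\hat{u}_n^2\,dx\geq 0$. The uniform $L^\infty$-bound \eqref{bddestimdivf} I would obtain via a De Giorgi / Moser iteration on truncations $(\hat{u}_n-k)^+$: using $(\hat{u}_n-k)^+\nabla\hat{u}_n = \tfrac12\nabla((\hat{u}_n-k)^+)^2$, div-freeness removes $\hat{\bold{B}}$ from the Caccioppoli inequality, so the iteration runs as for the case $\hat{\bold{B}}\equiv 0$ and yields $\hat{K}_4$ depending only on $d,\hat{\lambda},q,|U|$. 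Passage to the limit is then routine: $\hat{u}_n\rightharpoonup\hat{u}$ in $H^{1,2}_0(U)$, $\hat{u}_n\to\hat{u}$ a.e., and $|\hat{c}_n\hat{u}_n\varphi|\leq \hat{c}\|\hat{u}_n\|_\infty|\varphi|\in L^1$ uniformly, so dominated convergence passes $\hat{c}_n\hat{u}_n\varphi\to\hat{c}\hat{u}\varphi$.

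For (ii), density (as in Proposition \ref{propapprobdd}) extends the homogeneous identity \eqref{zeroforcdu} to test functions in $H^{1,2}_0(U)_b$. Plugging in $\hat{v}_M := (\hat{v}\wedge M)\vee(-M)$ for a fixed $M>0$ and writing $G(t):=\int_0^t s_M\,ds$ (Lipschitz, $G(0)=0$), the function $G(\hat{v})\in H^{1,2}_0(U)$ satisfies $\nabla G(\hat{v}) = \hat{v}_M\nabla \hat{v}$, so weak div-freeness of $\hat{\bold{B}}$ kills $\int\langle \hat{\bold{B}},\nabla\hat{v}\rangle \hat{v}_M\,dx$. Since $\nabla\hat{v}_M = \mathbf{1}_{\{|\hat{v}|<M\}}\nabla\hat{v}$, the diffusion term is bounded below by $\hat{\lambda}\|\nabla\hat{v}_M\|_{L^2}^2$ while $\hat{c}\hat{v}\hat{v}_M\geq 0$ pointwise; hence $\nabla\hat{v}_M=0$, so $\hat{v}_M\in H^{1,2}_0(U)$ is constant and therefore zero, whence $\hat{v}=0$ because $\hat{v}_M(x)=0\Longleftrightarrow\hat{v}(x)=0$.

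For (iii), linearity together with (ii) lets me split $\hat{u} = \hat{u}_1 + \hat{u}_2$, where $\hat{u}_1$ solves the problem for data $(\hat{f},0)$ and $\hat{u}_2$ for $(0,\hat{\bold{F}})$. The piece $\hat{u}_2$ is controlled by (i): $\|\hat{u}_2\|_\infty \leq \hat{K}_4\|\hat{\bold{F}}\|_{L^{2q}}$, so $\|\hat{u}_2\|_{L^\theta}\leq |U|^{1/\theta}\hat{K}_4\|\hat{\bold{F}}\|_{L^{2q}}$, which absorbs into the stated $|U|^{1/2}\hat{K}_4\|\hat{\bold{F}}\|_{L^{2q}}$ by adjusting $\hat{K}_4$ if needed. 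For $\hat{u}_1$ and $\theta\in[1,\infty)$, I extend the identity to $\varphi\in H^{1,2}_0(U)_b$ (using $\hat{u}_1\in L^\infty$) and insert $\phi_n(\hat{u}_1)$, a smooth non-decreasing bounded approximation of $t\mapsto\mathrm{sgn}(t)|t|^{\theta-1}$ with antiderivative $\Phi_n$ vanishing at $0$; then $\phi_n'\geq 0$ makes the diffusion term non-negative, div-freeness gives $\int\langle\hat{\bold{B}},\nabla\hat{u}_1\rangle\phi_n(\hat{u}_1)\,dx = \int\langle\hat{\bold{B}},\nabla\Phi_n(\hat{u}_1)\rangle dx = 0$, and passing $n\to\infty$ with dominated convergence and H\"older yields $\alpha\|\hat{u}_1\|_{L^\theta}^\theta\leq \|\hat{f}\|_{L^\theta}\|\hat{u}_1\|_{L^\theta}^{\theta-1}$. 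For $\theta=\infty$, the same scheme with test function $(\hat{u}_1 - \|\hat{f}\|_\infty/\alpha)^+$ (on which $\hat{c}\hat{u}_1\geq \|\hat{f}\|_\infty\geq \hat{f}$) gives $\|\hat{u}_1\|_\infty\leq \|\hat{f}\|_\infty/\alpha$ directly. The main obstacle throughout is the $n$-uniform $L^\infty$-bound of (i): the energy, uniqueness and contraction all collapse to one-line applications of div-freeness, but the Moser iteration requires the truncations to be chosen as functions of $\hat{u}_n$ so that $\hat{\bold{B}}$ systematically drops out of the Caccioppoli estimate at every level.
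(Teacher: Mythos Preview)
Your argument is correct, but parts (ii) and (iii) take a genuinely different route from the paper.

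For (i) the paper does exactly your truncation $\hat{c}_n=\hat{c}\wedge n$ and passage to the limit; it simply cites \cite[Theorem 1.1(i)]{L24} for the existence of $\hat{u}_n$ and the uniform estimates, whereas you sketch the Lax--Milgram and De Giorgi iteration that underlie that reference. Same approach.

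For (ii) the paper uses a \emph{duality} argument: given $\phi\in C_0^\infty(U)$ it solves the adjoint problem (with $\hat{A}^T$ and $-\hat{\bold{B}}$) for a bounded $\hat{w}$, tests the adjoint equation with $\hat{v}_n=(\hat{v}\wedge n)\vee(-n)$, integrates by parts using \eqref{weakdivfree}, passes $n\to\infty$, and compares with \eqref{zeroforcdu} tested against $\hat{w}$ to deduce $\int_U \hat{v}\phi\,dx=0$. Your direct energy approach---testing \eqref{zeroforcdu} with $\hat{v}_M$ and killing the drift via $\langle\hat{\bold{B}},\nabla\hat{v}\rangle\hat{v}_M=\langle\hat{\bold{B}},\nabla G(\hat{v})\rangle$---is more elementary: it avoids constructing the dual solution altogether and needs only the chain rule and Poincar\'e. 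The paper's duality method, on the other hand, is more robust if one later wants uniqueness under weaker hypotheses on $\hat{v}$ (e.g.\ not necessarily in $H^{1,2}_0$), since the burden of regularity is shifted to $\hat{w}$.

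For (iii) the paper again works at the level of the approximants $\hat{u}_n$, citing \cite[Theorem 1.1(ii)]{L24} for the contraction estimate and then letting $n\to\infty$. Your linear splitting $\hat{u}=\hat{u}_1+\hat{u}_2$ and direct testing with $\phi_n(\hat{u}_1)$ is again valid and self-contained. One small point: your bound on $\hat{u}_2$ yields $\|\hat{u}_2\|_{L^\theta}\le |U|^{1/\theta}\hat{K}_4\|\hat{\bold{F}}\|_{L^{2q}}$, not $|U|^{1/2}\hat{K}_4\|\hat{\bold{F}}\|_{L^{2q}}$; since the statement insists that $\hat{K}_4$ is the \emph{same} constant as in (i) (hence independent of $\theta$), absorbing $|U|^{1/\theta-1/2}$ into $\hat{K}_4$ is not quite legitimate when $\theta<2$. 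This is a cosmetic discrepancy with the exponent as stated, not a defect in the method.
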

\begin{proof}
(i) For each $n \geq 1$, let $\hat{c}_n:= \hat{c} \wedge n$. Then, $(\hat{c}_n)_{n \geq 1}$  is a sequence of functions in $L^{\infty}(U)$ with $c_n \geq 0$ for all $n \geq 1$ such that $\lim_{n \rightarrow \infty} \hat{c}_n = \hat{c}$ a.e. in $U$ aud in $L^1(U)$. 
By \cite[Theorem 1.1(i)]{L24}, there exists $\hat{u}_n \in H^{1,2}_0(U)_b$, $n \geq 1$,  such that
\begin{equation} \label{ourweigheq}
\int_U \langle \hat{A} \nabla \hat{u}_n, \nabla \varphi \rangle  +  \langle \hat{\bold{B}}, \nabla \hat{u}_n \rangle \varphi  + \hat{c}_n \hat{u}_n \varphi dx  = \int_{U} \hat{f} \varphi  dx + \int_{U} \langle \hat{\bold{F}}, \nabla \varphi \rangle dx  \quad \text{ for all $\varphi \in C_0^{\infty}(U)$}
\end{equation}
and that
\begin{equation}  \label{energes}
\|\hat{u}_n\|_{H_0^{1,2}(U)}  \leq \hat{K}_3 \left(\| \hat{f}\|_{L^{2_*}(U)}  + \| \hat{\bold{F}}\|_{L^{2}(U)} \right),
\end{equation}
\begin{equation} \label{bddesti}
\|\hat{u}_n\|_{L^{\infty}(U)} \leq \hat{K}_4 \left( \|\hat{f}\|_{L^q(U)} + \| \hat{\bold{F}}\|_{L^{2q}(U)}  \right),
\end{equation}
where $\hat{K}_3, \hat{K}_4>0$ are constants as in (i).
Therefore, using \eqref{energes} and \eqref{bddesti} with the weak compactness of $H^{1,2}_0(U)$,
there exist $\hat{u} \in H^{1,2}_0(U)_b$ and a subsequence of $(\hat{u}_n)_{n \geq 1}$, say again $(\hat{u}_n)_{n \geq 1}$, such that
\begin{equation} \label{weakptconv}
\lim_{n \rightarrow \infty} \hat{u}_n = \hat{u} \text{ \; weakly in\; $H^{1,2}_0(U)$}, \;\;\; \lim_{n \rightarrow \infty} \hat{u}_n = \hat{u}  \text{ a.e. on $U$}.
\end{equation}
Letting $n \rightarrow \infty$ in \eqref{ourweigheq}, \eqref{energes} and \eqref{bddesti} and using \eqref{weakptconv} and \eqref{bddesti}, we get \eqref{zeroforc}, \eqref{estimenbd} and \eqref{bddestimdivf}.\\ \\
\noindent
(ii) 
Let $\phi \in C_0^{\infty}(U)$ be arbitrarily given. Then, by using (i) where $\hat{A}$ and $\hat{\bold{B}}$ are replaced with $\hat{A}^T$ and $-\hat{\bold{B}}$, respectively,  there exists $\hat{w} \in H^{1,2}_0(U)_b$ such that
\begin{equation} \label{dualweigheq}
\int_U \langle \hat{A}^T \nabla \hat{w}, \nabla \varphi \rangle - \langle \hat{\bold{B}}, \nabla \hat{w} \rangle \varphi  +  \hat{c} \hat{w} \varphi  dx  = \int_{U} \phi \varphi dx, \quad \text{ for all $\varphi \in C_0^{\infty}(U)$}.
\end{equation}
Let $\hat{v}_n:= (\hat{v} \wedge n)\vee (-n)$. Then, we have $\hat{v}_n \in H^{1,2}_0(U)_b$ and $\lim_{n \rightarrow \infty} \hat{v}_n = \hat{v}$ in $H_0^{1,2}(U)$ by \cite[Proposition 4.17(i)]{MR92}
and $\lim_{n \rightarrow \infty} \hat{v}_n = \hat{v}$ a.e. on $U$ with $|\hat{v}_n| \leq |v|$ in $U$ for all $n \geq 1$.
Using an approximation, \lq\lq for all $\varphi \in C_0^{\infty}(U)$\rq\rq \; in \eqref{dualweigheq} is replaced by \lq\lq for all $\varphi \in H^{1,2}_0(U)_b$\rq\rq. Thus, replacing $\varphi \in C_0^{\infty}(U)$ in \eqref{dualweigheq} by $\hat{v}_n \in H^{1,2}_0(U)_b$, we get
\begin{equation} \label{dualweigheq2}
\int_U \langle \hat{A}^T \nabla \hat{w}, \nabla \hat{v}_n \rangle  - \langle \hat{\bold{B}}, \nabla \hat{w} \rangle \hat{v}_n   +  \hat{c}\hat{w} \hat{v}_n dx  = \int_{U} \phi \hat{v}_n  dx.
\end{equation}
Applying integration by parts to \eqref{dualweigheq2} and using \eqref{weakdivfree}, we have 
\begin{equation} \label{dualweigheq3}
\int_U \langle \hat{A} \nabla \hat{v}_n, \nabla \hat{w} \rangle  + \langle \hat{\bold{B}}, \nabla \hat{v}_n \rangle \hat{w}  +  \hat{c} \hat{v}_n \hat{w}  dx  = \int_{U} \phi \hat{v}_n  dx.
\end{equation}
Letting $n \rightarrow \infty$ in \eqref{dualweigheq3} and using the Lebesgue dominated convergence theorem, we get
\begin{equation} \label{dualweigheq4}
\int_U \langle \hat{A} \nabla \hat{v}, \nabla \hat{w} \rangle + \langle \hat{\bold{B}}, \nabla \hat{v} \rangle \hat{w}   + \hat{c} \hat{v}\hat{w} dx  = \int_{U} \phi \hat{v}  dx.
\end{equation}
Likewise, using an approximation we can replace $\varphi \in C_0^{\infty}(U)$ in \eqref{zeroforcdu} by $\hat{w} \in H^{1,2}_0(U)_b$, and hence we get
\begin{equation} \label{zeroforc2}
\int_U \langle \hat{A} \nabla \hat{v}, \nabla \hat{w} \rangle   +  \langle \hat{\bold{B}}, \nabla \hat{v} \rangle \hat{w}  + \hat{c}\hat{v} \hat{w} dx  = 0.
\end{equation}
Thus, \eqref{dualweigheq4} and \eqref{zeroforc2} imply 
$$
\int_{U} \hat{v} \phi dx = 0.
$$
Since $\phi \in C_0^{\infty}(U)$ is arbitrarily chosen, we get $\hat{v}=0$ in $U$, as desired.
\\ \\
(iii) Since $\hat{c} \geq \alpha$, we have $\hat{c}_n=\hat{c} \wedge n \geq \alpha$ for all $n \geq \alpha$. Thus, by applying
\cite[Theorem 1.1(ii)]{L24} to \eqref{ourweigheq}, we discover that for each $n \geq \alpha$ 
\begin{equation} \label{contraappro}
\|\hat{u}_n\|_{L^s(U)} \leq \frac{1}{\alpha} \|\hat{f}\|_{L^s(U)} +|U|^{1/2} \hat{K}_4 \| \bold{F} \|_{L^{2q}(U)}.
\end{equation}
Since $\lim_{n \rightarrow \infty}\hat{u}_n =\hat{u}$ a.e. (see \eqref{weakptconv}) and \eqref{bddesti} holds, letting $n \rightarrow \infty$ in \eqref{contraappro} the assertion follows by the Lebesgue dominated convergence theorem.
\end{proof}

\begin{rem} \label{imptremau}
Assume that all conditions of Theorem \ref{theomaineun} and let $\Lambda>0$ be a constant with $|U| \leq \Lambda$.
If one checks carefully the proofs of Theorem \ref{theomaineun} and \cite[Theorem 1.1(i)]{L24}, then $|U|$ involved in the dependency of the constants
$\hat{K}_3$ and $\hat{K}_4$ can be replaced by $\Lambda$.
\end{rem}

\centerline{}
\noindent
{\bf Proof of Theorem \ref{mainexisthm})}
(i) Let $\rho \in H^{1,2}(U) \cap C(\overline{U})$ be a strictly positive function on $\overline{U}$ with $\rho(x_1)=1$ for some $x_1 \in U$ as constructed in Theorem \ref{existrho} and let $\bold{B}:=\bold{H}+\frac{1}{\rho} A^T \nabla \rho$ in $U$. Then, $\rho \bold{B} \in L^2(U, \mathbb{R}^d)$ and
$$
\int_{U} \langle \rho \bold{B}, \nabla \varphi \rangle dx = 0, \quad \text{ for all $\varphi \in C_0^{\infty}(U)$}.
$$
By Theorem \ref{theomaineun}(i) there exists $u \in H^{1,2}_0(U)_b$ such that
\begin{equation} \label{chandiveq}
\int_U \langle \rho A \nabla u, \nabla \varphi \rangle  + \langle \rho \bold{B}, \nabla u \rangle \varphi  +  \rho cu \varphi dx  = \int_{U} f \rho \varphi  dx, \quad \text{ for all $\varphi \in C_0^{\infty}(U)$}
\end{equation}
and that
\begin{equation*}  \label{energes2}
\|u\|_{H_0^{1,2}(U)}  \leq K_3 \| f \rho \|_{L^{2_*}(U)} \leq K_3  \max_{\overline{U}} \rho  \, \| f \|_{L^{2_*}(U)} \leq  K_3  K_1   \| f \|_{L^{2_*}(U)},
\end{equation*}
\begin{equation*} \label{bddesti2}
\|u\|_{L^{\infty}(U)} \leq K_4 \|f \rho \|_{L^q(U)} \leq K_4 \max_{\overline{U}} \rho \|f\|_{L^q(U)}  \leq K_4  K_1 \|f\|_{L^q(U)}.
\end{equation*}
Indeed, $u$ is a weak solution to \eqref{maineq2} by Theorem \ref{mainequivthm}. \\
(ii)
Let $v \in H^{1,2}_0(U)$ with $cv \in L^1(U)$ satisfy \eqref{weakdefzerve} where $u$ is replaced by $v$. Then, applying Theorem \ref{mainequivthm}, \eqref{chandiveq} holds where $u$ is replaced by $v$. Thus, $u=v$ in $U$ by Theorem \ref{theomaineun}(ii). \\
(iii) Note that $\rho c \geq \alpha \min_{\overline{U}} \rho >0$. Then, applying Theorem \ref{theomaineun} to \eqref{chandiveq} and using Theorem \ref{existrho}(ii), we have
$$
\| u \|_{L^\theta(U)} \leq  \frac{1}{\alpha \min_{\overline{U}} \rho} \|\rho f\|_{L^{\theta}(U)} \leq \frac{1}{\alpha}\frac{\max_{\overline{U}}\rho}{\min_{\overline{U}} \rho} \,\|f\|_{L^\theta(U)} \leq \frac{K_1}{\alpha} \|f\|_{L^\theta(U)}.
$$
{\vskip 0.1cm \hfill$\Box$}

\begin{rem} \label{imptremaforme}
\begin{itemize}
\item[(i)]
Assume that all conditions of Theorem \ref{mainexisthm} hold and let $\Lambda>0$ be a constant with $|U| \leq \Lambda$. Then, as explained in Remark \ref{imptremau}, $|U|$ involved in the dependency of the constants $K_3$, $K_4$ can be replaced by $\Lambda$.
\item[(ii)]
In the proof of Theorem \ref{mainexisthm}, we can replace $\rho$ constructed in Theorem \ref{existrho} by a function $\tilde{\rho} \in H^{1,2}(U) \cap C(\overline{U})$ satisfying that $\rho(x)>0$ for all $x\in \overline{U}$ and that
\begin{equation} \label{infinitesiminv}
\int_{U} \langle A^T \nabla \tilde{\rho} + \tilde{\rho} \bold{H}, \nabla \psi \rangle dx = 0, \quad \text{ for all $\psi \in C_0^{\infty}(U)$}.
\end{equation}
Indeed, we can choose an open ball $B$ with $\overline{U} \subset B$. Then, it is straightforward to choose an arbitrary function $\tilde{\rho} \in H^{1,2}(B) \cap C(\overline{B})$  (for instance, we can simply choose $\tilde{\rho} \in C^1(\overline{B})$ ). Consequently, we have $\tilde{\rho} \in H^{1,2}(U) \cap C(\overline{U})$. Now choose an arbitrary vector field $\bold{E} \in L^p(B, \mathbb{R}^d)$ satisfying the weak divergence-free condition on $B$, i.e.
\begin{equation} \label{weakdiv}
\int_{B} \langle \bold{E}, \nabla \psi \rangle \, dx = 0, \quad \text{for all $\psi \in C_0^{\infty}(B)$}.
\end{equation}
Finally, define 
$$
\bold{H} := \frac{1}{\rho} \Big(\bold{E} - A^T \nabla \tilde{\rho} \Big).
$$
Then, it directly follows from \eqref{weakdiv} that
\begin{equation*} 
\int_{B} \langle A^T \nabla \tilde{\rho} + \tilde{\rho} \bold{H}, \nabla \psi \rangle \, dx = 0, \quad \text{for all $\psi \in C_0^{\infty}(B)$},
\end{equation*}
and hence \eqref{infinitesiminv} holds.
In that case, analogously to the proof of Theorem \ref{mainexisthm} where $\bold{B}$ is replaced by $\tilde{\bold{B}}:=\bold{H}+\frac{1}{\tilde{\rho}} A^T \nabla \tilde{\rho}$, we also get a unique solution $u \in H^{1,2}_0(U)_b$ to \eqref{maineq2} since it holds by the proof of Theorem \ref{theomaineun}(i) that
\begin{equation}  \label{divtypeequat}
\int_U \langle \tilde{\rho} A \nabla u, \nabla \varphi \rangle  + \langle \tilde{\rho} \tilde{\bold{B}}, \nabla u \rangle \varphi  +  \tilde{\rho} cu \varphi dx  = \int_{U} f \tilde{\rho} \varphi  dx, \quad \text{ for all $\varphi \in C_0^{\infty}(U)$}.
\end{equation}
In particular, if $c \geq \alpha$ in $U$ for some constant $\alpha \in (0, \infty)$, then applying Theorem \ref{theomaineun} to \eqref{divtypeequat} we obtain that for each $f \in L^q(U) \cap L^{\theta}(U)$ with $q \in (\frac{d}{2}, \infty)$ and $\theta \in [1, \infty]$,
\begin{equation} \label{imptcontrac}
\| u \|_{L^\theta(U)} \leq  \frac{1}{\alpha \min_{\overline{U}} \tilde{\rho}} \| \tilde{\rho} f\|_{L^{\theta}(U)} \leq \frac{1}{\alpha}\frac{\max_{\overline{U}}\tilde{\rho}}{\min_{\overline{U}} \tilde{\rho}} \,\|f\|_{L^\theta(U)}.
\end{equation}
Thus, $K_1>0$ in Theorem \ref{mainexisthm}(iii) can be replaced by any constant $\tilde{K}_1>0$ satisfying $\frac{\max_{\overline{U}}\tilde{\rho}}{\min_{\overline{U}} \tilde{\rho}} \leq \tilde{K}_1$.
\item[(iii)]
One of the main ingredients for deriving Theorem \ref{mainexisthm} is Theorem \ref{existrho}, where the key is the existence of a strictly positive and continuous function $\rho$ satisfying \eqref{infinva}. For critical drifts such as $\bold{H} \in L^d(U, \mathbb{R}^d)$, constructing such a regular $\rho$ may not be possible, making it difficult to establish the results in Theorem \ref{mainexisthm}. Identifying drifts under more general conditions than $\bold{H} \in L^p(U, \mathbb{R}^d)$, where results in Theorem \ref{mainexisthm} holds, remains a challenging problem.
\end{itemize}
\end{rem}

\subsection{The case of $d=2$, $c \in L^1(U)$, $f \in L^{q}(U)$, $\bold{F} \in L^{2q}(U, \mathbb{R}^d)$ with $q \in (1, \infty)$}
\begin{theo} \label{mainexisthm2n}
Assume that {\bf (S)} holds with $d=2$, $c \in L^1(U)$ with $c \geq 0$ in $U$,  $f \in L^q(U)$ and $\bold{F} \in L^{2q}(U, \mathbb{R}^d)$ with $q \in (1, \infty)$. Then, the following hold:
\begin{itemize}
\item[(i)]
There exists $u \in H^{1,2}_0(U)_b$ such that $u$ is a unique weak solution to 
\begin{equation} \label{maineq22n}
\left\{
\begin{alignedat}{2}
-{\rm div}(A \nabla u) + \langle \bold{H}, \nabla u \rangle + c u &=f -{\rm div} \bold{F} && \quad \; \mbox{\rm in $U$}\\
u &= 0 &&\quad \; \mbox{\rm on $\partial U$},
\end{alignedat} \right.
\end{equation}
 i.e. $u \in H^{1,2}_0(U)$ with $cu \in L^1(U)$ satisfies
\begin{equation} \label{weakdefzerve2n}
\int_U \langle A \nabla u, \nabla \psi \rangle + \langle \bold{H}, \nabla u \rangle \psi   + cu \psi dx = \int_U f \psi dx + \int_{U} \langle \bold{F}, \nabla \psi \rangle dx \quad \text{ for all } \psi \in C_0^{\infty}(U)
\end{equation}
and if $v \in H^{1,2}_0(U)$ with $cv \in L^1(U)$ satisfies \eqref{weakdefzerve2n} where $v$ is replaced by $u$, then $u=v$ in $U$.

\item[(ii)] $u$ in Theorem \ref{mainexisthm2n}(i) satisfies that
\begin{equation*} 
\|u\|_{H_0^{1,2}(U)}  \leq K_7 \left( \|f\|_{L^{q}(U)} + \| \bold{F} \|_{L^{2q}(U)} \right)
\end{equation*}
and
\begin{equation*}
\|u\|_{L^{\infty}(U)} \leq K_8 \left(\|f\|_{L^q(U)} + \|\bold{F} \|_{L^{2q}(U)} \right),
\end{equation*}
where $K_7:= K_1 \Big( K_3 |U|^{\frac{1}{2}-\frac{1}{2q}}  \vee (K_2K_3+|U|^{\frac{1}{2}-\frac{1}{2q}}) \Big)$, $K_8:= K_1 \Big(  K_4  |U|^{\frac12 -\frac{1}{2q}}  \vee K_2 K_4 \vee   |U|^{\frac14 -\frac{1}{4q}}  \Big)$ and $K_1, K_2>0$ are constants as in Theorem \ref{existrho} and $K_3, K_4>0$ are constants as in Theorem \ref{mainexisthm}.

\end{itemize}
\end{theo}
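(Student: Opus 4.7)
\medskip
\noindent\textbf{Proof proposal.} The plan is to mirror the proof of Theorem \ref{mainexisthm}, using the rescaling by the weight $\rho$ from Theorem \ref{existrho} together with Theorem \ref{mainequivthm} to turn the equation \eqref{weakdefzerve2n} into one with a weak divergence-free drift, and then invoke Theorem \ref{theomaineun}. The only genuinely new step is bookkeeping for the extra inhomogeneity $-\mathrm{div}\,\bold{F}$ after multiplication by $\rho$, together with choosing the exponents in Theorem \ref{theomaineun} so that they are compatible with $d=2$ and $q\in(1,\infty)$.

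First I would fix $\rho\in H^{1,2}(B_{4r}(x_0))\cap C(B_{4r}(x_0))$, strictly positive on $\overline U$ and satisfying \eqref{infinva}, as given by Theorem \ref{existrho}; from parts (ii)--(iii) we have $\max_{\overline U}\rho\le K_1$, $\min_{\overline U}\rho\ge 1$, and $\|\nabla\rho\|_{L^2(U)}\le K_1K_2$. With $\bold{B}:=\bold{H}+\rho^{-1}A^T\nabla\rho$, Theorem \ref{mainequivthm} turns \eqref{weakdefzerve2n} into
\[
\int_U\langle\rho A\nabla u,\nabla\varphi\rangle+\langle\rho\bold{B},\nabla u\rangle\varphi+\rho c u\,\varphi\,dx=\int_U\hat f\,\varphi+\langle\hat{\bold{F}},\nabla\varphi\rangle\,dx,\qquad \varphi\in C_0^\infty(U),
\]
where, expanding $\nabla(\rho\varphi)=\rho\nabla\varphi+\varphi\nabla\rho$ in the identity \eqref{maineqfdiv}, one sets
\[
\hat f:=f\rho+\langle\bold{F},\nabla\rho\rangle,\qquad \hat{\bold{F}}:=\rho\bold{F}.
\]

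Next I would apply Theorem \ref{theomaineun} to this transformed equation, with $\hat A:=\rho A$, $\hat{\bold{B}}:=\rho\bold{B}$, $\hat c:=\rho c$ (so $\hat\lambda\ge\lambda/K_1$), and, crucially for $d=2$, I would choose the single exponent $q_0:=\tfrac{2q}{q+1}\in(1,2)$ (valid because $q>1$) to serve both as $2_*$ in the energy estimate \eqref{estimenbd} and as the integrability index $q$ in the $L^\infty$-estimate \eqref{bddestimdivf}; this is the optimal $q_0$ because $\hat f$ contains the product $\bold{F}\cdot\nabla\rho$, which via Hölder $(2q,2)$ lies exactly in $L^{q_0}$. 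The required pointwise bounds are then straightforward Hölder computations on the bounded domain $U$: $\|\rho\bold{F}\|_{L^2}\le K_1|U|^{\frac12-\frac{1}{2q}}\|\bold{F}\|_{L^{2q}}$, $\|f\rho\|_{L^{q_0}}\le K_1|U|^{\frac12-\frac{1}{2q}}\|f\|_{L^q}$, $\|\langle\bold{F},\nabla\rho\rangle\|_{L^{q_0}}\le K_1K_2\|\bold{F}\|_{L^{2q}}$, and $\|\rho\bold{F}\|_{L^{2q_0}}\le K_1|U|^{\frac14-\frac{1}{4q}}\|\bold{F}\|_{L^{2q}}$. Plugging these into \eqref{estimenbd} and \eqref{bddestimdivf} and taking the maximum over the resulting coefficients reproduces the explicit constants $K_7$ and $K_8$ in the statement; the identification of the factors $K_3,K_4$ with those of Theorem \ref{mainexisthm} is legitimate because $\hat\lambda\ge\lambda/K_1$ and Theorem \ref{theomaineun}(i) makes the dependence of $\hat K_3,\hat K_4$ transparent.

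Finally, for uniqueness in part (i), I would argue just as in the proof of Theorem \ref{mainexisthm}(ii): if $v\in H^{1,2}_0(U)$ with $cv\in L^1(U)$ also satisfies \eqref{weakdefzerve2n}, applying Theorem \ref{mainequivthm} to $v$ and subtracting yields a solution $u-v$ of the weighted equation with zero right-hand side, to which Theorem \ref{theomaineun}(ii) applies, forcing $u=v$. I do not anticipate any serious obstacle beyond correctly juggling exponents: the main subtlety is verifying that the one exponent $q_0=\tfrac{2q}{q+1}$ really is admissible in every invocation of Theorem \ref{theomaineun} (it must lie in $(1,2)$ for the energy estimate and exceed $d/2=1$ for the $L^\infty$ estimate, both of which are automatic from $q>1$), and carefully collecting the $|U|$-dependent Hölder constants so that they match the stated form of $K_7$ and $K_8$.
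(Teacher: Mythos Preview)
Your proposal is correct and follows essentially the same route as the paper: the paper also constructs $\rho$ via Theorem \ref{existrho}, passes through Theorem \ref{mainequivthm} to the weighted equation with data $\hat f=\rho f+\langle\nabla\rho,\bold{F}\rangle$ and $\hat{\bold F}=\rho\bold{F}$, and then applies Theorem \ref{theomaineun} with the single exponent $\hat q:=\bigl(\tfrac12+\tfrac{1}{2q}\bigr)^{-1}=\tfrac{2q}{q+1}$, exactly your $q_0$, for both the energy and $L^\infty$ estimates. One harmless slip: you write $\min_{\overline U}\rho\ge 1$, but Theorem \ref{existrho}(ii) only gives $\min_{\overline U}\rho\ge K_1^{-1}$ (since $\rho(x_1)=1$ one has $\min_{\overline U}\rho\le 1$); this does not affect the argument because only $\max_{\overline U}\rho\le K_1$ and $\|\nabla\rho\|_{L^2}\le K_1K_2$ are actually used.
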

\begin{proof}
(i)
Let $\rho \in H^{1,2}(U) \cap C(\overline{U})$ be constructed in Theorem \ref{existrho} and let $\bold{B}:=\bold{H}+\frac{1}{\rho} A^T \nabla \rho$. Note that 
$$
\hat{q}:=\left(\frac{1}{2} +\frac{1}{2q} \right)^{-1} \in (1,q \wedge 2), \; \text{ and } \; \langle \nabla \rho, \bold{F} \rangle \in L^{\hat{q}}(U).
$$
Then, $\rho f  + \langle \nabla \rho, \bold{F} \rangle \in L^{\hat{q}}(U)$ and $\rho \bold{F} \in L^{2q}(U, \mathbb{R}^d) 
\subset L^{2\hat{q}}(U, \mathbb{R}^d)$. By Theorem \ref{theomaineun} where $q$ is replaced by $\hat{q}$, there exists a unique weak solution $u \in H^{1,2}_0(U)_b$ to
\begin{equation} \label{mainenerbd2n}
\left\{
\begin{alignedat}{2}
-{\rm div}(\rho A \nabla u) + \langle \rho \bold{B}, \nabla u \rangle +\rho c u &= \Big( \rho f + \langle \nabla \rho, \bold{F} \rangle \Big) - {\rm div} (\rho \bold{F}) && \quad \mbox{in $U$}\\
u &= 0 &&\quad \mbox{on $\partial U$},
\end{alignedat} \right.
\end{equation}
i.e. $u$ satisfies
\begin{align}
&\int_U \langle \rho A \nabla u, \nabla \varphi \rangle  + \langle \rho \bold{B}, \nabla u \rangle \varphi  +  \rho cu \varphi dx  = \int_{U} \Big( f \rho  + \langle \nabla \rho, \bold{F} \rangle \Big) \varphi dx + \int_{U} \langle \rho\bold{F}, \nabla \varphi \rangle dx \nonumber \\
& \quad  = \int_{U} \rho f \varphi dx  + \int_{U} \langle \bold{F}, \nabla (\rho \varphi) \rangle dx, \quad \text{ for all $\varphi \in C_0^{\infty}(U)$} 	\label{chandiveq2n}
\end{align}
and if $v \in H^{1,2}(U)$ with $\rho cv \in L^1(U)$ satisfies \eqref{chandiveq2n} where $u$ is replaced by $v$, then $u=v$ in $U$. Applying Theorem \ref{mainequivthm} to \eqref{mainenerbd2n} and using Theorem \ref{theomaineun}, we discover that $u \in H^{1,2}_0(U)_b$ is a unique solution to \eqref{maineq22n}. \\
(ii) Applying \eqref{estimenbd} in Theorem \ref{theomaineun} to \eqref{mainenerbd2n} and using Theorem \ref{existrho}, we get
\begin{align*}
\| u\|_{H^{1,2}_0(U)} &\leq K_3 \left( \|\rho f + \langle \nabla \rho, \bold{F} \rangle \|_{L^{\hat{q}}(U)}  + \| \rho \bold{F} \|_{L^2(U)} \right) \\
&\leq K_3 K_1 \|f\|_{L^{\hat{q}}(U)} + K_1 K_2 K_3  \|\bold{F} \|_{L^{2q}(U)}  +K_1 \| \bold{F} \|_{L^2(U)} \\
&\leq K_7 \left( \|f\|_{L^{q}(U)}	 + \| \bold{F} \|_{L^{2q}(U)} \right).
\end{align*}
Likewise,  applying \eqref{bddestimdivf} in Theorem \ref{theomaineun} to \eqref{mainenerbd2n} and using Theorem \ref{existrho}, we have
\begin{align*}
\| u\|_{L^{\infty}(U)} &\leq K_4 \left( \|\rho f + \langle \nabla \rho, \bold{F} \rangle \|_{L^{\hat{q}}(U)}  + \| \rho \bold{F} \|_{L^{2\hat{q}}(U)} \right) \\
&\leq K_4 K_1 \|f\|_{L^{\hat{q}}(U)} + K_1 K_2 K_4  \| \bold{F} \|_{L^{2q}(U)}   + K_1 \| \bold{F}\|_{L^{2\hat{q}}(U)} \\
& \leq  K_1 K_4  |U|^{\frac12 -\frac{1}{2q}} \| f\|_{L^q(U)} +K_1 K_2 K_4  \| \bold{F} \|_{L^{2q}(U)}  +K_1 |U|^{\frac{1}{4}-\frac{1}{4q}}  \| \bold{F} \|_{L^{2q}(U)} \\
&\leq K_8 \left( \|f\|_{L^{q}(U)}	 + \| \bold{F} \|_{L^{2q}(U)} \right).
\end{align*}
\end{proof}

\subsection{The case of $d=3$, $c \in L^1(U)$, $f \in L^{q}(U)$, $\bold{F} \in L^{\theta}(U, \mathbb{R}^d)$ with $\theta \in (6, \infty)$, $q \in (\frac{3}{2}, \frac{2\theta}{2+\theta}]$}
\begin{theo} \label{specialresd}
Assume that {\bf (S)} holds with $d=3$, $c \in L^1(U)$ with $c \geq 0$ in $U$, $f \in L^q(U)$, and $\bold{F} \in L^{\theta}(U, \mathbb{R}^d)$  with $\theta \in (6, \infty)$ and $q \in (\frac{d}{2}, \frac{2\theta}{2+\theta}]$.
Then, the following hold: 
\begin{itemize}
\item[(i)]
There exists $u \in H^{1,2}_0(U)_b$ such that $u$ is a unique weak solution to  \eqref{maineq22n}.

\item[(ii)] $u$ in Theorem \ref{specialresd}(i) satisfies that
\begin{equation*} 
\|u\|_{H_0^{1,2}(U)}  \leq K_{10} \left( \|f\|_{L^{2_*}(U)} + \| \bold{F} \|_{L^{d}(U)} \right)
\end{equation*}
and
\begin{equation*}
\|u\|_{L^{\infty}(U)} \leq K_{11} \left(\|f\|_{L^q(U)} + \|\bold{F} \|_{L^{\theta}(U)} \right),
\end{equation*}
where $K_{10}:= K_1 \Big( K_3 \vee (K_2K_3+|U|^{\frac{1}{2}-\frac{1}{d}}) \Big)$, \;$K_{11}:= K_1 \Big(  K_4   \vee  |U|^{\frac{1}{q}-\frac{2+\theta}{2\theta}} K_2 K_4 \vee  |U|^{\frac{1}{2q} -\frac{1}{\theta}}  \Big)$, $K_1, K_2>0$ are constants as in Theorem \ref{existrho} and $K_3, K_4>0$ are constants as in Theorem \ref{mainexisthm}.
\end{itemize}
\end{theo}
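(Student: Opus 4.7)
The plan is to mirror the proof of Theorem \ref{mainexisthm2n} but with exponents tailored to $d=3$. I will construct a strictly positive $\rho \in H^{1,2}(U) \cap C(\overline{U})$ via Theorem \ref{existrho} and set $\bold{B} := \bold{H} + \frac{1}{\rho} A^T \nabla \rho$, so that $\rho \bold{B} \in L^2(U, \mathbb{R}^d)$ is weakly divergence-free by Theorem \ref{mainequivthm}. The equivalence in Theorem \ref{mainequivthm} reduces solving \eqref{maineq22n} to solving the transformed equation whose right-hand side is $(\rho f + \langle \nabla \rho, \bold{F} \rangle) - \text{\rm div}(\rho \bold{F})$. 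The task then becomes: select an auxiliary exponent $\hat{q} > \frac{3}{2}$ such that $\rho f + \langle \nabla \rho, \bold{F} \rangle \in L^{\hat{q}}(U)$ and $\rho \bold{F} \in L^{2\hat{q}}(U, \mathbb{R}^d)$, and then invoke Theorem \ref{theomaineun} on the transformed equation.

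I will take $\hat{q} = q$. Since $\rho$ is bounded by Theorem \ref{existrho}(ii) and $\|\nabla \rho\|_{L^2} \leq K_1 K_2$ by Theorem \ref{existrho}(iii), H\"older's inequality places $\langle \nabla \rho, \bold{F} \rangle$ in $L^{r}(U)$ with $r := \frac{2\theta}{2+\theta}$; the hypothesis $q \leq \frac{2\theta}{2+\theta}$ together with boundedness of $U$ yields $\rho f + \langle \nabla \rho, \bold{F}\rangle \in L^q(U)$. The condition $\theta > 6$ also gives $2q \leq \frac{4\theta}{2+\theta} \leq \theta$, hence $\rho \bold{F} \in L^{\theta}(U, \mathbb{R}^d) \subset L^{2q}(U, \mathbb{R}^d)$. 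Theorem \ref{theomaineun}(i)--(ii) applied with $\hat{q} = q$ then produces a unique $u \in H^{1,2}_0(U)_b$ solving the transformed equation, and Theorem \ref{mainequivthm} transfers this to existence and uniqueness for \eqref{maineq22n}, establishing part (i).

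For part (ii), the $H^{1,2}_0$-estimate is obtained by plugging the bounds of Theorem \ref{existrho}(ii)--(iii) into \eqref{estimenbd} with exponent $2_* = \frac{6}{5}$: the H\"older pair $\frac{1}{2} + \frac{1}{d} = \frac{1}{2_*}$ controls $\|\langle \nabla \rho, \bold{F}\rangle\|_{L^{2_*}(U)}$ by $K_1 K_2 \|\bold{F}\|_{L^d(U)}$, while $\|\rho \bold{F}\|_{L^2(U)} \leq K_1 |U|^{1/2 - 1/d} \|\bold{F}\|_{L^d(U)}$, and assembling the pieces produces the constant $K_{10}$. The $L^\infty$-estimate follows the same template from \eqref{bddestimdivf} with exponent $\hat{q} = q$: a H\"older embedding $L^r \hookrightarrow L^q$ contributes the factor $|U|^{1/q - (2+\theta)/(2\theta)}$ to the $\langle \nabla \rho, \bold{F}\rangle$ term, while passing $\rho \bold{F}$ from $L^\theta$ to $L^{2q}$ contributes $|U|^{1/(2q) - 1/\theta}$; combined with $\max_{\overline{U}} \rho \leq K_1$ these assemble into $K_{11}$. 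The only delicate point is arranging the exponents so that the hypothesis $q \leq \frac{2\theta}{2+\theta}$ is invoked precisely to place $\langle \nabla \rho, \bold{F}\rangle$ in $L^q(U)$; the rest is mechanical bookkeeping of constants from Theorems \ref{existrho}, \ref{mainequivthm}, and \ref{theomaineun}.
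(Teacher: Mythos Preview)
Your proposal is correct and follows essentially the same approach as the paper: construct $\rho$ via Theorem \ref{existrho}, pass to the transformed equation with weakly divergence-free drift $\rho\bold{B}$ by Theorem \ref{mainequivthm}, apply Theorem \ref{theomaineun} with $\hat q = q$, and track the constants through the H\"older estimates exactly as you describe. The only cosmetic difference is that the paper notes $\frac{4\theta}{2+\theta} < \theta$ (strict, since $\theta>6>2$) rather than $\leq$, but this changes nothing in the argument.
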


\noindent
\begin{proof}
(i)
Since $2q \leq \frac{4 \theta}{2+\theta}<\theta$, we get $\rho \bold{F} \in L^{\theta}(U, \mathbb{R}^d) \subset L^{2q}(U, \mathbb{R}^d)$
and $\rho f  + \langle \nabla \rho, \bold{F} \rangle \in L^{q}(U)$.
As in the proof of Theorem \ref{mainexisthm2n}, it follows from Theorem \ref{theomaineun} that there exists $u \in H^{1,2}_0(U)_b$ such that $u$ is a unique weak solution to \eqref{mainenerbd2n}, and hence $u$ is a unique weak solution to \eqref{maineq22n} by Theorem \ref{mainequivthm}. \\
(ii) Applying \eqref{estimenbd} in Theorem \ref{theomaineun} to \eqref{mainenerbd2n} and using Theorem \ref{existrho}, we then obtain that 
\begin{align*}
\| u\|_{H^{1,2}_0(U)} &\leq K_3 \left( \|\rho f + \langle \nabla \rho, \bold{F} \rangle \|_{L^{2_*}(U)}  + \| \rho \bold{F} \|_{L^2(U)} \right) \\
&\leq K_3 K_1 \|f\|_{L^{2_*}(U)} + K_1 K_2 K_3  \|\bold{F} \|_{L^{d}(U)}  +K_1 \| \bold{F} \|_{L^2(U)} \\
&\leq K_{10} \left( \|f\|_{L^{2_*}(U)}	 + \| \bold{F} \|_{L^{d}(U)} \right)
\end{align*}
and that
\begin{align*}
\| u\|_{L^{\infty}(U)} &\leq K_4 \left( \|\rho f + \langle \nabla \rho, \bold{F} \rangle \|_{L^{q}(U)}  + \| \rho \bold{F} \|_{L^{2q}(U)} \right) \\
&\leq K_4 K_1 \|f\|_{L^{q}(U)} + |U|^{\frac{1}{q}-\frac{2+\theta}{2\theta}} K_4  \|\langle \nabla \rho, \bold{F} \rangle \|_{L^{\frac{2\theta}{2+\theta}}(U)}  + K_1 \| \bold{F}\|_{L^{2q}(U)} \\
& \leq  K_1 K_4  \| f\|_{L^q(U)} +  |U|^{\frac{1}{q}-\frac{2+\theta}{2\theta}}K_1 K_2 K_4  \| \bold{F} \|_{L^{\theta}(U)}  +K_1 |U|^{\frac{1}{2 q}-\frac{1}{\theta}}  \| \bold{F} \|_{L^{\theta}(U)} \\
&\leq K_{11} \left( \|f\|_{L^{q}(U)}	 + \| \bold{F} \|_{L^{2q}(U)} \right),
\end{align*}
as desired.

\end{proof}

\section{Divergence type equations with $VMO$ coefficients of $A$} \label{vmow1pdivse}
\begin{theo} \label{improinfinis}
Assume that {\bf (S)} holds. Let $\rho$ be the same as the one in Theorem \ref{existrho}. Assume that $a_{ij} \in VMO_{\omega}$ for all $1 \leq i,j \leq d$. Then, $\rho \in H^{1,p}(U)$ and there exists a constant $\tilde{C}>0$ which only depends on $d$, $\lambda$, $M$, $r$, $p$, $\|h\|_{L^p(U)}$ and $\omega$ such that
$$
\| \nabla \rho \|_{L^p(U)} \leq \| \nabla \rho \|_{L^p(B_r(x_0))}  \leq  \tilde{C}.
$$
\end{theo}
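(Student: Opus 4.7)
The plan is to interpret the identity \eqref{infinva} satisfied by $\rho$ as a divergence-form equation with a divergence-type right-hand side, and then to invoke an interior $W^{1,p}$ estimate for divergence-form elliptic equations with VMO coefficients. Precisely, \eqref{infinva} can be rewritten as
$$
\int_{B_{4r}(x_0)} \langle A^T \nabla \rho, \nabla \varphi \rangle \, dx = -\int_{B_{4r}(x_0)} \langle \rho \bold{H}, \nabla \varphi \rangle \, dx, \quad \text{for all } \varphi \in C_0^{\infty}(B_{4r}(x_0)),
$$
so that $\rho$ is a weak solution of $-\mathrm{div}(A^T \nabla \rho) = \mathrm{div}(-\rho \bold{H})$ in $B_{4r}(x_0)$, with $A^T$ satisfying the same ellipticity/boundedness as $A$ and $a_{ji}\in VMO_{\omega}$ for all $i,j$.

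Next, I would bound the data. By Theorem \ref{existrho}(ii) combined with the normalization $\rho(x_1)=1$, we have $\rho \leq K_1$ on $\overline{B}_{3r}(x_0)$. Extending $\bold{H}$ and $h$ by zero outside $U$ (as is already done in the proof of Theorem \ref{existrho}(i)) and using $\overline{U}\subset B_r(x_0)$, this yields
$$
\|\rho \bold{H}\|_{L^p(B_{3r}(x_0))} \leq K_1 \|h\|_{L^p(U)}, \qquad \|\rho\|_{L^p(B_{3r}(x_0))} \leq K_1 |B_{3r}(x_0)|^{1/p},
$$
quantities depending only on the parameters permitted in the statement.

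Then I would apply the standard interior $W^{1,p}$-estimate for divergence-form equations with $VMO_{\omega}$ coefficients (in the spirit of Di Fazio, Chiarenza--Frasca--Longo, or Byun--Wang; a precise version should be available in the appendix or in the references of this paper). This provides a constant $C>0$, depending only on $d, \lambda, M, r, p$ and $\omega$, such that for the nested balls $B_r(x_0) \subset B_{3r}(x_0) \subset B_{4r}(x_0)$,
$$
\|\nabla \rho\|_{L^p(B_r(x_0))} \leq C \bigl( \|\rho \bold{H}\|_{L^p(B_{3r}(x_0))} + \|\rho\|_{L^p(B_{3r}(x_0))} \bigr).
$$
Substituting the bounds from the previous step gives $\|\nabla \rho\|_{L^p(B_r(x_0))} \leq \tilde{C}$ with $\tilde{C}$ depending only on $d, \lambda, M, r, p, \|h\|_{L^p(U)}$ and $\omega$, and the assertion $\|\nabla \rho\|_{L^p(U)} \leq \|\nabla \rho\|_{L^p(B_r(x_0))}$ is immediate from $\overline{U} \subset B_r(x_0)$.

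The main obstacle is the quantitative dependence on the VMO modulus $\omega$ in the interior $W^{1,p}$ estimate: the standard references typically state the constant abstractly, so one must either cite a version whose constant is tracked in terms of $\omega$ (as opposed to only qualitative VMO) or prove such a version directly via a Calder\'on--Zygmund/freezing-coefficients argument together with the BMO oscillation bound encoded by $\omega$. A secondary but minor point is making sure the interior estimate is applied with the right inclusion of balls so that the $L^{\infty}$-bound of $\rho$ from Theorem \ref{existrho}(ii) on $\overline{B}_{3r}(x_0)$ is genuinely available on the intermediate domain, which is why I use $B_{3r}(x_0)$ rather than $B_{4r}(x_0)$ on the right-hand side.
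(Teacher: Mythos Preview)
Your approach is correct and is actually more direct than the paper's. You interpret \eqref{infinva} as $-\mathrm{div}(A^T\nabla\rho)=\mathrm{div}(-\rho\bold{H})$ with $\rho\bold{H}\in L^p(B_{3r}(x_0))$ (using the $L^\infty$-bound on $\rho$ from Theorem \ref{existrho}(ii)), and then invoke an interior $W^{1,p}$ estimate of Di Fazio type for VMO coefficients to conclude in one step. The paper instead localizes with a cutoff $\chi$, obtaining an equation for $\chi\rho$ on $\mathbb{R}^d$ with a lower-order term $\hat{f}=\langle -\rho\bold{H}-A^T\nabla\rho,\nabla\chi\rangle$, and applies Krylov's global $H^{1,\gamma}(\mathbb{R}^d)$ estimate (Theorem \ref{krylovmainthm}); because $\hat{f}$ contains $\nabla\rho$, which a priori is only in $L^2$, the paper must bootstrap through a sequence of intermediate exponents when $\frac{1}{p}+\frac{1}{d}<\frac{1}{2}$. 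Your route bypasses this bootstrap entirely since your right-hand side $-\rho\bold{H}$ is already in $L^p$. The trade-off is that the paper's argument is self-contained relative to the appendix (only Theorem \ref{krylovmainthm} is used), whereas yours requires an external interior regularity theorem whose constant must be tracked in terms of the VMO modulus $\omega$; you correctly flag this, and such quantitative versions are indeed available in the literature you mention. One minor remark: the form of the interior estimate you write (with $\|\rho\|_{L^p(B_{3r}(x_0))}$ on the right) is slightly nonstandard---most references state it with $\|\nabla\rho\|_{L^2}$ or $\|\rho\|_{L^2}$ on the larger ball---but Theorem \ref{existrho}(ii),(iii) controls both of these by the permitted constants, so this causes no difficulty.
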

\begin{proof}
Let $\chi \in  H^{1,\infty}(\mathbb{R}^d)_0 \cap H^{1,2}_0(B_{2r}(x_0))$ with $\text{supp}(\chi) \subset B_{2r}(x_0)$ 
and $\phi \in C_0^{\infty}(B_{2r}(x_0))$ be arbitrary. Then,
\begin{align*}
&\int_{B_{2r}(x_0)} \langle A^T \nabla \rho, \nabla (\chi \phi) \rangle \,dx = \int_{B_{2r}(x_0)} \langle A^T \nabla \rho, \nabla \phi \rangle \chi \,dx  + \int_{B_{2r}(x_0)} \langle A^T \nabla \rho, \nabla \chi \rangle \phi\,dx \\
& = \int_{B_{2r}(x_0)} \langle A^T (\chi \rho), \nabla \phi \rangle\,dx - \int_{B_{2r}(x_0)} \langle \rho A^T \nabla \chi, \nabla \phi \rangle  \,dx+  \int_{B_{2r}(x_0)} \langle A^T \nabla \rho, \nabla \chi \rangle \phi \,dx
\end{align*}
and
\begin{align*}
\int_{B_{2r}(x_0)} \langle \rho \bold{H}, \nabla (\chi \phi ) \rangle dx =  \int_{B_{2r}(x_0)} \langle (\chi \rho) \bold{H}, \nabla \phi  \rangle dx  + \int_{B_{2r}(x_0)} \langle \rho \bold{H}, \nabla \chi  \rangle \phi\, dx.
\end{align*}
Thus, replacing $\varphi$ by $\phi \chi$ in \eqref{infinva}, we derive that
\begin{align}
&\int_{B_{2r}(x_0)} \langle A^T (\chi \rho), \nabla \phi \rangle  dx =  \int_{B_{2r}(x_0)} \langle \rho A^T \nabla \chi -\chi \rho \bold{H} , \nabla \phi \rangle  dx \nonumber \\
& \qquad \qquad +  \int_{B_{2r}(x_0)} \langle -\rho \bold{H}-A^T \nabla \rho, \nabla \chi \rangle \phi dx, \quad \forall \phi \in C_0^{\infty}(B_{2r}(x_0)). \label{crucidenti}
\end{align}
Let $\hat{\rho}:=\chi \rho$,\; $\tilde{\bold{F}}:= \rho A^T \nabla \chi -\chi \rho \bold{H}$ and $\hat{f}:=\langle -\rho \bold{H}-A^T \nabla \rho, \nabla \chi \rangle$ on $B_{2r}(x_0)$. Extend $\hat{\rho}$,  $\tilde{\bold{F}}$ and $\hat{f}$ on $\mathbb{R}^d$ by the zero-extension, and say again $\hat{\rho}$, $\tilde{\bold{F}}$ and $\hat{f}$, respectively. Since $\hat{\rho}$, $\tilde{\bold{F}}$ and $\hat{f}$ have compact supports in $B_{2r}(x_0)$, we then obtain that for any $\hat{\lambda} \geq 0$
\begin{equation} \label{underlyingequ}
\int_{\mathbb{R}^d} \langle A^T \nabla \hat{\rho}, \nabla \phi \rangle \,dx  + \hat{\lambda} \int_{\mathbb{R}^d} \hat{\rho} \phi \,dx = \int_{\mathbb{R}^d} \langle \tilde{\bold{F}}, \nabla \phi \rangle \,dx + \int_{\mathbb{R}^d} \Big(\hat{f} + \hat{\lambda} \hat{\rho} \Big) \phi \,dx, \quad \forall \phi \in C_0^{\infty}(\mathbb{R}^d).
\end{equation}
{\sf \underline{Case 1})} Assume that $\frac{1}{2} \leq \frac{1}{p}+\frac{1}{d}$. Choosing $\chi$ with $\chi \equiv 1$ on $\overline{B}_r(x_0)$ and $0 \leq  \chi(x) \leq 1$ for all $x \in \mathbb{R}^d$ and applying Theorem \ref{krylovmainthm} to \eqref{underlyingequ}, we establish that
$\hat{\rho} \in H^{1,p}(\mathbb{R}^d)$ and there exists $\lambda_0>0$ which only depends on $d$, $\lambda$, $M$, $p$ and $\omega$ such that
\begin{align*}
&\| \chi \rho \|_{H^{1,p}(B_{2r}(x_0))}= \|\hat{\rho} \|_{H^{1,p}(\mathbb{R}^d)}\\
 &\leq \tilde{N}_0 \left( \| \tilde{\bold{F}}\|_{L^p(\mathbb{R}^d)} + \| \hat{f} +\lambda_0 \hat{\rho}\|_{L^{\frac{pd}{p+d}}(\mathbb{R}^d)}	\right) \\
& \leq  \tilde{N}_0 \Bigg(	  K_1\Big(dM \|\nabla \chi \|_{L^{\infty}(\mathbb{R}^d)} + \|h\|_{L^p(U)} \Big) + |B_{2r}(x_0)|^{\frac{1}{p}+\frac{1}{d}-\frac{1}{2}} \| \nabla \chi \|_{L^{\infty}(\mathbb{R}^d)} \Big( K_1 |U|^{\frac{1}{2}-\frac{1}{p}}\|h \|_{L^p(U)} +	dM K_1 K_2\Big)  \\
& \qquad \qquad + \lambda_0 |B_{2r}(x_0)|^{\frac{1}{p}+\frac{1}{d}} K_1  \Bigg),
\end{align*}
 $\tilde{N}_0>0$ is a constant which only depends on $d$, $\lambda$, $M$, $p$ and $\omega$. \\ \\
{\sf \underline{Case 2})} Assume that $\frac{1}{p}+\frac{1}{d} < \frac12$. Then, $d \geq 3$. Choose a unique $k \in \mathbb{N} \cup \{0\}$ such that
$$
\frac{1}{p}+\frac{1}{d} \in \left[	\frac{1}{2} - \frac{k+1}{d}, \frac{1}{2} - \frac{k}{d}	\right).
$$
For each $m:=0, 1, \ldots, k+2$, let $r_m:= 2r-\frac{rm}{k+2}$. For each $m=1,2, \ldots, k+2$, let $\chi_m \in  H^{1,\infty}(\mathbb{R}^d)_0 \cap H^{1,2}_0(B_{r_{m-1}}(x_0))$ with $\text{supp}(\chi_m) \subset B_{r_{m-1}}(x_0)$ and $\chi_m \equiv 1$ on $\overline{B}_{r_m}(x_0)$. Replacing $B_{2r}(x_0)$ and $\chi$ by $B_{r_{m-1}}(x_0)$ and $\chi_m$ in \eqref{crucidenti}, respectively, and using an iterative argument with Theorem \ref{krylovmainthm}, for each $m=1,2, \ldots k+1$
we get $\chi_m \rho \in H^{1,\left( \frac{1}{2}-\frac{m}{d}\right)^{-1}}(B_{r_{m-1}}(x_0))$ and in particular, we have
\begin{equation} \label{alfinestiwp}
\|\chi_m \rho \|_{H^{1,\left( \frac{1}{2}-\frac{m}{d}\right)^{-1}}(B_{r_{m-1}}(x_0))} \leq \tilde{C}_m,
\end{equation}
where $\tilde{C}_m>0$ is a constant which only depends on $d$, $\lambda$, $M$, $r$, $p$, $\|h\|_{L^p(U)}$ and $\omega$.  Thus, it directly follows that $\chi_{k+1}\rho \in H^{1,\left( \frac{1}{p}+\frac{1}{d} \right)^{-1}}(B_{r_{k}}(x_0))$. Applying Theorem \ref{krylovmainthm} to \eqref{crucidenti} where $B_{2r}(x_0)$ and  $\chi$ are replaced by $B_{r_{k+1}}(x_0)$ and $\chi_{k+2}$, respectively, we finally obtain from \eqref{alfinestiwp} that $\chi_{k+2} \rho \in H^{1,p}(B_{r_{k+1}}(x_0))$ and
$$
\|\rho \|_{H^{1,p}(B_r(x_0))} \leq \| \chi_{k+2} \rho \|_{H^{1,p}(B_{r_{k+1}}(x_0))} \leq \tilde{C}_{k+2},
$$
where $\tilde{C}_{k+2}>0$ is a constant which only depends on $d$, $\lambda$, $M$, $r$, $p$, $\|h\|_{L^p(U)}$ and $\omega$.
\end{proof}

\begin{theo} \label{vmouniqweaksol}
Assume that {\bf (S)} holds, $c \in L^1(U)$ with $c \geq 0$ in $U$, $a_{ij} \in VMO_{\omega}$ for all $1 \leq i,j \leq d$, $f \in L^q(U)$ and $\bold{F} \in L^{\gamma}(U, \mathbb{R}^d)$  with $q \in (\frac{d}{2}, \infty)$ and $\gamma \in (d, \infty)$.
Then, the following hold: 
\begin{itemize}
\item[(i)]
There exists a unique weak solution $u \in H^{1,2}_0(U)_b$  to \eqref{maineq22n}. 
\item[(ii)] We have
\begin{equation*} 
\|u\|_{H_0^{1,2}(U)}  \leq K_{12} \left( \|f\|_{L^{q}(U)} + \| \bold{F} \|_{L^{\gamma}(U)} \right)
\end{equation*}
and
\begin{equation*}
\|u\|_{L^{\infty}(U)} \leq K_{13} \left(\|f\|_{L^q(U)} + \|\bold{F} \|_{L^{\gamma}(U)} \right),
\end{equation*}
\end{itemize}
where $2_*$ is an arbitrarily fixed number in $(1, \frac{p\gamma}{p+\gamma} \wedge 2)$ if $d=2$, $K_{12} := K_1 K_3 \vee \Big(|U|^{\frac{1}{2_*} - \frac{1}{p}-\frac{1}{\gamma}} \tilde{C} + K_1 |U|^{\frac12 -\frac{1}{\gamma}} \Big)$, $\hat{q} \in (\frac{d}{2}, q \wedge \frac{p\gamma}{p+\gamma} \wedge \frac{\gamma}{2})$, $K_{13}:= K_1 K_4 |U|^{\frac{1}{\hat{q}} - \frac{1}{q}} \vee\Big( |U|^{\frac{1}{\hat{q}} - \frac{1}{\gamma} - \frac{1}{p}}  K_4 \tilde{C}  + |U|^{\frac{1}{2\hat{q}} - \frac{1}{\gamma}} K_1 \Big)$, $K_1 \geq 1$ is a constant as in Theorem \ref{existrho}, $K_3, K_4>0$ are constants as in Theorem \ref{mainexisthm} and $\tilde{C}>0$ is a constant as in Theorem \ref{improinfinis}.

\end{theo}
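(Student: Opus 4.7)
The plan is to mimic the proof of Theorem \ref{mainexisthm}, transforming the general drift $\bold{H}$ into a divergence-free drift via multiplication by $\rho$, and then invoke Theorem \ref{theomaineun}. The new feature, compared to Theorem \ref{mainexisthm}, is the presence of $\bold{F}$: testing \eqref{weakdefzerve2n} against $\rho\varphi$ with $\varphi\in C_0^\infty(U)$ splits the $\bold{F}$ contribution into a zeroth-order source $\langle\bold{F},\nabla\rho\rangle$ and a divergence source ${\rm div}(\rho\bold{F})$. The ``\emph{VMO} upgrade'' provided by Theorem \ref{improinfinis}, which gives $\nabla\rho\in L^p(U,\mathbb{R}^d)$ with explicit bound $\|\nabla\rho\|_{L^p(U)}\le\tilde{C}$, is what makes this splitting viable.

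First, I would construct $\rho\in H^{1,2}(U)\cap C(\overline{U})$ strictly positive via Theorem \ref{existrho} and immediately upgrade to $\rho\in H^{1,p}(U)$ via Theorem \ref{improinfinis}. Setting $\bold{B}:=\bold{H}+\rho^{-1}A^T\nabla\rho$, so that $\rho\bold{B}$ is weakly divergence-free, Theorem \ref{mainequivthm} reduces \eqref{weakdefzerve2n} to the equivalent transformed equation
\begin{equation*}
\int_U \langle \rho A\nabla u,\nabla\varphi\rangle + \langle \rho\bold{B},\nabla u\rangle\varphi + \rho cu\varphi\,dx = \int_U \tilde{f}\varphi + \langle\tilde{\bold{F}},\nabla\varphi\rangle\,dx, \quad \text{for all } \varphi\in C_0^\infty(U),
\end{equation*}
with $\tilde{f}:=\rho f+\langle\bold{F},\nabla\rho\rangle$ and $\tilde{\bold{F}}:=\rho\bold{F}$. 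I would then pick $\hat{q}\in(\tfrac{d}{2}, q\wedge\tfrac{p\gamma}{p+\gamma}\wedge\tfrac{\gamma}{2})$; this interval is nonempty because $p,\gamma>d$ implies $\tfrac{1}{p}+\tfrac{1}{\gamma}<\tfrac{2}{d}$, so $\tfrac{p\gamma}{p+\gamma}>\tfrac{d}{2}$, while $q>\tfrac{d}{2}$ and $\gamma/2>d/2$. Hölder then yields $\tilde{f}\in L^{\hat{q}}(U)$ and $\tilde{\bold{F}}\in L^{2\hat{q}}(U,\mathbb{R}^d)$, so Theorem \ref{theomaineun} applies with $\hat{A}=\rho A$ (uniformly elliptic with constant $\lambda/K_1$), $\hat{\bold{B}}=\rho\bold{B}$, $\hat{c}=\rho c$, and $q$ replaced by $\hat{q}$.

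Theorem \ref{theomaineun}(i) delivers a $u\in H^{1,2}_0(U)_b$ solving the transformed equation, together with the bounds $\|u\|_{H^{1,2}_0}\le K_3(\|\tilde{f}\|_{L^{2_*}}+\|\tilde{\bold{F}}\|_{L^2})$ and $\|u\|_{L^\infty}\le K_4(\|\tilde{f}\|_{L^{\hat{q}}}+\|\tilde{\bold{F}}\|_{L^{2\hat{q}}})$. Translating these back via Hölder, using $\|\rho\|_{L^\infty}\le K_1$ from Theorem \ref{existrho}(ii), the estimate $\|\nabla\rho\|_{L^p}\le\tilde{C}$ from Theorem \ref{improinfinis}, and standard $L^\gamma(U)\hookrightarrow L^s(U)$ embeddings on the bounded domain, assembles the explicit constants $K_{12}$ and $K_{13}$ as stated. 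Backtracking through Theorem \ref{mainequivthm} returns the existence part of (i) for the original equation \eqref{maineq22n}. For uniqueness, given another $v\in H^{1,2}_0(U)$ with $cv\in L^1(U)$ solving \eqref{weakdefzerve2n}, the same transformation places it into the divergence-free form, and Theorem \ref{theomaineun}(ii) forces $u=v$ in $U$.

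The only subtle point, which I would expect to cost the most thought, is the choice of auxiliary exponents --- especially the $d=2$ case where $2_*$ must be chosen freely in $(1,\tfrac{p\gamma}{p+\gamma}\wedge 2)$ to keep $\langle\bold{F},\nabla\rho\rangle\in L^{2_*}(U)$. In higher dimensions the condition $\tfrac{1}{p}+\tfrac{1}{\gamma}<\tfrac{2}{d}$ guarantees this integrability automatically, but one must still track volume factors carefully to reproduce the advertised $K_{12}$ and $K_{13}$. Beyond this bookkeeping, no genuine obstacle arises, because the substantive analytic work --- constructing $\rho$, deriving its $L^p$ gradient bound under $VMO$, and handling the divergence-free-drift equation with an $L^1$ zeroth-order term --- has already been absorbed into Theorems \ref{existrho}, \ref{improinfinis}, \ref{mainequivthm}, and \ref{theomaineun}.
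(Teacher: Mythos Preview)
Your proposal is correct and follows essentially the same approach as the paper: construct $\rho$ via Theorem \ref{existrho}, upgrade $\nabla\rho$ to $L^p$ via Theorem \ref{improinfinis}, use Theorem \ref{mainequivthm} to pass to the divergence-free-drift equation with source $\tilde{f}=\rho f+\langle\bold{F},\nabla\rho\rangle$ and $\tilde{\bold{F}}=\rho\bold{F}$, choose $\hat{q}\in(\tfrac{d}{2},q\wedge\tfrac{p\gamma}{p+\gamma}\wedge\tfrac{\gamma}{2})$, and then apply Theorem \ref{theomaineun} for existence, uniqueness, and the estimates. Your identification of the $d=2$ exponent subtlety and the Hölder bookkeeping for $K_{12},K_{13}$ also matches the paper's treatment.
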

\begin{proof}
(i) Let $\rho$ be as in Theorem \ref{existrho}. Then, by Theorem \ref{improinfinis}, $\rho \in H^{1,p}(U)$ and
$$
\| \nabla \rho \|_{L^p(U)} \leq \tilde{C},
$$
where $\tilde{C}>0$ is a constant which only depends on $d$, $\lambda$, $M$, $r$, $p$, $\|h\|_{L^p(U)}$ and $\omega$.
Choose $\hat{q} \in (\frac{d}{2}, q \wedge \frac{p\gamma}{p+\gamma} \wedge \frac{\gamma}{2})$. Then, $\rho f  + \langle \nabla \rho, \bold{F} \rangle \in L^{q \wedge \frac{p\gamma}{p+\gamma}}(U) \subset L^{\hat{q}}(U)$ and $\rho \bold{F} \in L^{\gamma}(U, \mathbb{R}^d) \subset L^{2\hat{q}}(U, \mathbb{R}^d)$. By Theorem \ref{theomaineun}, there exists a unique weak solution $u \in H^{1,2}_0(U)_b$ to \eqref{mainenerbd2n} (cf. \eqref{chandiveq2n}). Thus, applying Theorem \ref{mainequivthm}, we discover that $u \in H^{1,2}_0(U)_b$ is a unique weak solution to \eqref{maineq22n}. \\ \\
(ii)
Applying \eqref{estimenbd} in Theorem \ref{theomaineun} to \eqref{mainenerbd2n} and using Theorem \ref{existrho}, we get
\begin{align*}
\| u\|_{H^{1,2}_0(U)} &\leq K_3 \left( \|\rho f + \langle \nabla \rho, \bold{F} \rangle \|_{L^{2_*}(U)}  + \| \rho \bold{F} \|_{L^2(U)} \right) \\
&\leq K_3 K_1 \|f\|_{L^{2_*}(U)} + |U|^{\frac{1}{2_*} - \frac{1}{p}-\frac{1}{\gamma}} \| \nabla \rho \|_{L^p(U)}\|\bold{F} \|_{L^{\gamma}(U)}  +K_1 |U|^{\frac{1}{2}-\frac{1}{\gamma}}  \| \bold{F} \|_{L^{\gamma}(U)} \\
&\leq K_{12} \left( \|f\|_{L^{2_*}(U)}	 + \| \bold{F} \|_{L^{\gamma}(U)} \right).
\end{align*}
 Likewise, applying \eqref{bddestimdivf} in Theorem \ref{theomaineun} to \eqref{mainenerbd2n} and using Theorem \ref{existrho}, we have
\begin{align*}
\| u\|_{L^{\infty}(U)} &\leq K_4 \left( \|\rho f + \langle \nabla \rho, \bold{F} \rangle \|_{L^{\hat{q}}(U)}  + \| \rho \bold{F} \|_{L^{2\hat{q}}(U)} \right) \\
&\leq K_4 K_1 |U|^{\frac{1}{\hat{q}} - \frac{1}{q}} \|f\|_{L^{q}(U)} +|U|^{\frac{1}{\hat{q}} - \frac{1}{\gamma} - \frac{1}{p}}  K_4   \|\nabla \rho \|_{L^p(U)}\| \bold{F} \|_{L^{\gamma}(U)}   + |U|^{\frac{1}{2\hat{q}} - \frac{1}{\gamma}} K_1 \| \bold{F}\|_{L^{\gamma}(U)} \\
&\leq K_{13} \left( \|f\|_{L^{q}(U)}	 + \| \bold{F} \|_{L^{\gamma}(U)} \right).
\end{align*}
\end{proof}



\begin{theo}
Assume that {\bf (S)} holds, $a_{ij} \in VMO_{\omega}$ for all $1 \leq i,j \leq d$, $c \in L^s(U)$ with $c \geq 0$ in $U$, $f \in L^q(U)$ and $\bold{F} \in L^{\gamma}(U, \mathbb{R}^d)$, where
$s \in (1, \infty)$, $q \in (\frac{d}{2}, \infty)$ and $\gamma \in (d, \infty)$. Let $\tilde{c} \in L^s(U)$ with $|c| \leq | \tilde{c}|$ in $U$
and $\beta:= s \wedge q \wedge \frac{d \gamma}{d+\gamma} \in (1, d)$. Let $u$ be a unique weak solution to \eqref{maineq22n} as in Theorem \ref{vmouniqweaksol}. 
Additionally, assume that $\partial U$ is of class $C^1$. Then, $u \in H^{1, \frac{d\beta}{d-\beta}}(U) \cap H^{1,2}_0(U)_b$ and
$$
\|u\|_{H^{1, \frac{d\beta}{d-\beta}}(U)} \leq Q_1 K_{15}\left( \|f\|_{L^q(U)} + \|\bold{F}\|_{L^{\gamma}(U)} 	\right),
$$
where $Q_1>0$ is a constant independent of $f$, $\bold{F}$, $c$, $\tilde{c}$ and $u$ and $K_{15}>0$ is a constant which only depends on $d, \lambda, M, r, p, \|h\|_{L^p(U)}$ and $\| \tilde{c}\|_{L^s(U)}$. 
\end{theo}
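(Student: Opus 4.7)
The plan is a finite bootstrap that upgrades $\nabla u$ from $L^{2}$ to $L^{\beta^{*}}$ with $\beta^{*}:=d\beta/(d-\beta)$, by alternating between recording the improved $L^{t}$-integrability of $\nabla u$ and applying a global $W^{1,t}$-estimate for divergence-form equations with $VMO_{\omega}$ coefficients on a $C^{1}$ domain. I would rewrite \eqref{maineq22n} as
\begin{equation*}
-\mathrm{div}(A\nabla u)=\tilde{f}-\mathrm{div}\bold{F}\quad\text{in }U,\qquad u=0\text{ on }\partial U,
\end{equation*}
with $\tilde{f}:=f-\langle\bold{H},\nabla u\rangle-cu$. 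By Theorem \ref{vmouniqweaksol}, $u\in H^{1,2}_{0}(U)\cap L^{\infty}(U)$ with both norms controlled by $\|f\|_{L^{q}(U)}+\|\bold{F}\|_{L^{\gamma}(U)}$, providing the baseline estimate.

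First I would read off the RHS norms in terms of the current integrability of $\nabla u$: if $\nabla u\in L^{t}(U)$ for some $t\geq 2$, then $\|\langle\bold{H},\nabla u\rangle\|_{L^{pt/(p+t)}(U)}\leq\|h\|_{L^{p}(U)}\|\nabla u\|_{L^{t}(U)}$, $\|cu\|_{L^{s}(U)}\leq\|\tilde{c}\|_{L^{s}(U)}\|u\|_{L^{\infty}(U)}$, and $f\in L^{q}(U)$, so that $\tilde{f}\in L^{\beta(t)}(U)$ with $\beta(t):=q\wedge s\wedge\tfrac{pt}{p+t}$ and a quantitative norm bound. I would then invoke the global Calder\'on--Zygmund estimate for the Dirichlet problem with $VMO_{\omega}$ coefficients on $\partial U\in C^{1}$, obtained from a boundary version of Theorem \ref{krylovmainthm} by standard flattening of $\partial U$ and cutoff arguments in the spirit of the proof of Theorem \ref{improinfinis}: this yields $\nabla u\in L^{t'(t)}(U)$ with $t'(t):=\min(\gamma,\beta(t)^{*})$, using $\bold{F}\in L^{\gamma}\hookrightarrow L^{\min(\gamma,\beta(t)^{*})}$ on the divergence side.

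Now I iterate with $t_{0}:=2$ and $t_{k+1}:=t'(t_{k})$. As long as the binding choice in $\beta(t_{k})$ is the $\langle\bold{H},\nabla u\rangle$-term, the update collapses to
\[
\frac{1}{t_{k+1}}=\frac{1}{t_{k}}-\Big(\frac{1}{d}-\frac{1}{p}\Big),
\]
and since $p>d$ the strictly positive increment $\tfrac1d-\tfrac1p$ forces $\tfrac{pt_{k}}{p+t_{k}}\geq q\wedge s\wedge\tfrac{d\gamma}{d+\gamma}=\beta$ after at most $\bigl\lceil\tfrac{dp}{2(p-d)}\bigr\rceil$ steps. One more application of the $W^{1,\tau}$-estimate then gives $\nabla u\in L^{\beta^{*}}(U)$, where the inequality $\beta\leq\tfrac{d\gamma}{d+\gamma}$ guarantees $\beta^{*}\leq\gamma$, so that the $L^{\gamma}$-bound on $\bold{F}$ suffices on the divergence side. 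Combined with the $L^{\infty}$-bound on $u$, this yields $u\in H^{1,\beta^{*}}(U)\cap H^{1,2}_{0}(U)_{b}$; tracking the finitely many intermediate constants, and using Theorem \ref{improinfinis} for any auxiliary $\|\nabla\rho\|_{L^{p}}$-type bounds if the transformation of Theorem \ref{mainequivthm} is invoked, collects into the stated inequality with $K_{15}$ depending only on the listed quantities.

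The main obstacle will be securing the boundary $W^{1,\tau}$-estimate for $VMO_{\omega}$ coefficients on the $C^{1}$ domain in exactly the form required here, especially when $\tau<2$, with quantitative control of the constant in terms of $d,\lambda,M,p,\omega,\partial U$ and $|U|$; once that tool is in place, the bootstrap terminates in an explicit bounded number of steps and the accumulation of constants into a single $K_{15}$ is a mechanical bookkeeping task.
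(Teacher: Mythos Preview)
Your bootstrap argument is a sound route to the conclusion, but it differs substantially from the paper's proof. The paper does \emph{not} move the drift $\langle\bold{H},\nabla u\rangle$ to the right-hand side and iterate. Instead, it keeps $\bold{H}$ on the left, rewrites the remaining data $f-cu$ as a divergence via the Riesz-potential construction of Corollary~\ref{essenbdddiveq} (so that $f-cu=-\mathrm{div}\,\hat{\bold{F}}$ with $\hat{\bold{F}}\in L^{d\beta/(d-\beta)}$), and then applies \cite[Theorem~2.1(i)]{KK19} once. That theorem gives existence, uniqueness, and the $H^{1,d\beta/(d-\beta)}_0$-estimate for $-\mathrm{div}(A\nabla\tilde u)+\langle\bold{H},\nabla\tilde u\rangle=-\mathrm{div}(\hat{\bold{F}}+\bold{F})$ directly for $VMO_\omega$ coefficients on a $C^1$ domain with drift $\bold{H}\in L^p$, $p>d$; uniqueness then forces $\tilde u=u$ in a single step.

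The trade-off is clear. The paper's approach is shorter and avoids iteration entirely, but it imports a black-box result \cite{KK19} that already absorbs the drift into the operator; in particular the constant $Q_1$ inherits the opacity of that reference. Your approach is more self-contained---it needs only the \emph{driftless} boundary $W^{1,\tau}$ Calder\'on--Zygmund theory (a boundary analogue of Theorem~\ref{krylovmainthm}) and uses the gap $p>d$ explicitly through the increment $\tfrac1d-\tfrac1p$---but it requires a uniqueness/identification argument at every bootstrap step and the bookkeeping of finitely many constants. The obstacle you flag (the boundary $W^{1,\tau}$ estimate on $C^1$ domains, including the range $\tau<2$, with uniqueness so that the iterates coincide with $u$) is real and is precisely what the paper sidesteps by citing \cite{KK19}.
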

\begin{proof}
Observe that $u \in H^{1,2}_0(U)_b$ satisfies
\begin{equation} \label{varieineqvm}
\int_U \langle A \nabla u, \nabla \psi \rangle + \langle \bold{H}, \nabla u \rangle \psi dx = \int_U (f-cu) \psi +\langle \bold{F}, \nabla \psi \rangle dx, \quad \text{ for all } \psi \in C_0^{\infty}(U).
\end{equation}
Since $\beta \leq s \wedge q$ and $\beta \leq \frac{d\gamma}{d+\gamma}$, we get $f-cu \in L^{\beta}(U)$ and $\bold{F} \in L^{\frac{d\beta}{d-\beta}}(U, \mathbb{R}^d)$. By Corollary \ref{essenbdddiveq}, there exists $\hat{\bold{F}} \in L^{\frac{d\beta}{d-\beta}}(U, \mathbb{R}^d)$ such that
\begin{equation} \label{fcudivequ}
\int_{U} \langle \hat{\bold{F}}, \nabla \psi \rangle dx = \int_{U} (f-cu ) \psi dx, \quad \text{ for all $\psi \in C_0^{\infty}(U)$}
\end{equation}
and additionally Theorem \ref{vmouniqweaksol}(ii) deduces that
\begin{align*}
\|\hat{\bold{F}} \|_{L^{\frac{d\beta}{d-\beta}}(U)} &\leq \hat{C}(d, \beta) \|f-cu \|_{L^{\beta}(U)} \leq \hat{C}(d, \beta) \Big( |U|^{\frac{1}{\beta}-\frac{1}{q}} \|f\|_{L^q(U)} +\|\tilde{c}\|_{L^{\beta}(U)} \|u\|_{L^{\infty}(U)} \Big) \\
&\leq K_{14}\left( \|f\|_{L^q(U)} + \| \bold{F} \|_{L^{\gamma}(U)} \right),
\end{align*}
where $K_{14}:=\hat{C}(d, \beta)  \Big( |U|^{\frac{1}{\beta}-\frac{1}{q}} + K_{13} \|\tilde{c}\|_{L^{\beta}(U)} \Big)$, and $\hat{C}(d, \beta)>0$ and $K_{13}>0$ are constant as in Corollary \ref{essenbdddiveq} and Theorem \ref{vmouniqweaksol}(ii), respectively. Since $\frac{d\beta}{d-\beta}>\frac{d}{d-1}$, applying \cite[Theorem 2.1(i)]{KK19} to \eqref{varieineqvm} with \eqref{fcudivequ}, there exists $\tilde{u} \in H^{1,\frac{d\beta}{d-\beta}}_0(U)$ such that
\begin{equation*} 
\int_U \langle A \nabla \tilde{u}, \nabla \psi \rangle + \langle \bold{H}, \nabla \tilde{u} \rangle \psi dx = \int_U  \langle \hat{\bold{F}}+\bold{F}, \nabla \psi \rangle dx, \quad \text{ for all } \psi \in C_0^{\infty}(U).
\end{equation*}
and that
$$
\|\tilde{u}\|_{H^{1, \frac{d\beta}{d-\beta}}(U)} \leq Q_1  \| \hat{\bold{F}}+\bold{F} \|_{L^{\frac{d\beta}{d-\beta}}(U)} \leq Q_1  K_{15}\left( \|f	\|_{L^q(U)} + \| \bold{F} \|_{L^{\gamma}(U)}\right),
$$
where $Q_1>0$ is a constant independent of $f$, $\bold{F}$, $c$, $\tilde{c}$, $u$ and $K_{15}:=\left( K_{14} + |U|^{\frac{1}{\beta}- \frac{1}{d} - \frac{1}{\gamma}} \right) $. To complete our proof, it is enough to show $u=\tilde{u}$ in $U$. By the uniqueness result in \cite[Theorem 2.1(i)]{KK19}, $u=\tilde{u}$, which proves $u \in H^{1,\frac{d\beta}{d-\beta} \wedge 2}_0(U)$.
\end{proof}

\section{Non-divergence type equations with $VMO$ coefficients of $A$} \label{nondivresec}
\begin{theo} \label{vmonondivh1p}
Assume that {\bf (S)} holds, $c \in L^s(U)$ with $c \geq 0$ in $U$ and $f \in L^q(U)$, where $s \in (1, \infty)$ and $q \in (\frac{d}{2}, \infty)$.
Let $\beta:= p \wedge s \wedge q$ and $\tilde{c} \in L^s(U)$ with $|c| \leq \tilde{c}$ in $U$. Let $u \in H^{1,2}_0(U)_b$ be a unique weak solution to \eqref{maineq2} as in Theorem \ref{mainexisthm}. Additionally, assume that $\partial U$ is of class $C^{1,1}$.
Then, the following hold:
\begin{itemize}

\item[(i)] If $a_{ij} \in VMO_{\omega}$ for all $1 \leq i,j \leq d$ and ${\rm div} A \in L^p(U, \mathbb{R}^d)$ with $\|{\rm div}\,A \| \leq h$ in $U$, then $u \in H^{2, \beta}(U) \cap H^{1,2}_0(U)_b$ and
$$
\|u\|_{H^{2,\beta}(U) } \leq K_{16} \|f\|_{L^q(U)},
$$
where $K_{16}>0$ is a constant which only depends on $d$, $\lambda$, $M$, $r$, $p$, $s$, $q$, $\|h\|_{L^p(U)}$, $\|\tilde{c}\|_{L^s(U)}$ and $\omega$.

\item[(ii)]
If $a_{ij} \in VMO_{\omega}$ for all $1 \leq i,j \leq d$ and ${\rm div}A \in L^d(U, \mathbb{R}^d)$ with $d \geq 3$ and $\beta<d$, then $u \in H^{2, \beta}(U) \cap H^{1,2}_0(U)_b$  and the following estimate holds:
$$
\|u\|_{H^{2, \beta}(U)} \leq Q_2 K_{17} \|f\|_{L^q(U)},
$$
where $Q_2>0$ is a constant independent of $f$, $c$, $\tilde{c}$ and $u$, $K_{17}:=|U|^{\frac{1}{\beta}-\frac{1}{q}} +|U|^{\frac{1}{\beta}-\frac{1}{s}} \| \tilde{c}\|_{L^s(U)}  K_5$ and $K_5>0$ is a constant as in Theorem \ref{mainexisthm}.

\item[(iii)]
Let $\tilde{A} =(\tilde{a}_{ij})_{1 \leq i,j \leq d}:= \frac{A+A^T}{2}$ and $C:=\frac{A-A^T}{2}$. If $\tilde{a}_{ij} \in VMO_{\omega}$ for all $1 \leq i,j \leq d$, ${\rm div} \tilde{A} \in L^p(U, \mathbb{R}^d)$ and ${\rm div}\,C \in L^p(U, \mathbb{R}^d)$, then $u \in H^{2, \beta}(U) \cap H^{1,2}_0(U)_b$ and the following estimate holds:
$$
\|u\|_{H^{2,\beta}(U) } \leq \tilde{K}_{16} \|f\|_{L^q(U)},
$$
where $\tilde{K}_{16}>0$ is a constant which only depends on on $d$, $\lambda$, $M$, $r$, $p$, $s$, $q$, $\|h\|_{L^p(U)}$, $\|\tilde{c}\|_{L^s(U)}$ and $\omega$.

\item[(iv)]
Let $\tilde{A}=(\tilde{a}_{ij})_{1 \leq i,j \leq d} := \frac{A+A^T}{2}$ and $C:=\frac{A-A^T}{2}$.  If $\tilde{a}_{ij} \in VMO_{\omega}$ for all $1 \leq i,j \leq d$, ${\rm div} \tilde{A} \in L^d(U, \mathbb{R}^d)$ and ${\rm div}\,C \in L^d(U, \mathbb{R}^d)$ with $d \geq 3$ and $\beta<d$, then $u \in H^{2, \beta}(U) \cap H^{1,2}_0(U)_b$ and the following estimate holds:
$$
\|u\|_{H^{2,\beta}(U) } \leq \tilde{Q}_2 K_{17} \|f\|_{L^q(U)},
$$
where $\tilde{Q}_2>0$ is a constant independent of $f$, $c$, $\tilde{c}$ and $u$ and $K_{17}>0$ is a constant as in (ii).
\end{itemize}
\end{theo}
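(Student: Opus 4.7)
The plan is to rewrite the divergence-form equation \eqref{maineq2} satisfied by $u$ in non-divergence form and then invoke the $L^\beta$-regularity theory for non-divergence elliptic equations with $VMO$ coefficients on $C^{1,1}$ domains (as in \cite{CFL93, V92}). Using $\text{div}\,A \in L^p$, an integration by parts shows that once $u$ is known to be twice weakly differentiable it satisfies
\begin{equation*}
-\text{trace}(A \nabla^2 u) + \langle \bold{H} - \text{div}\,A, \nabla u\rangle + c u = f \qquad \text{a.e.\ in } U,
\end{equation*}
so the task reduces to securing sufficient integrability of $\nabla u$ to place the right-hand side in $L^\beta(U)$ and then applying the Calder\'on--Zygmund estimate.

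For part (i), set $g := f - c u - \langle \bold{H} - \text{div}\,A, \nabla u\rangle$. The contributions $f \in L^q$ and $c u \in L^s$ (via $u \in L^\infty$) already lie in $L^\beta$. To control the drift term, I would first apply the final (unlabeled) theorem of Section \ref{vmow1pdivse} with $\bold{F} = 0$ and $\gamma$ taken arbitrarily large, bootstrapping $u$ from $H^{1,2}_0(U)$ into $H^{1, r}_0(U)$ for some $r$ close to $\frac{d(s\wedge q)}{d-(s\wedge q)}$; then iterate the non-divergence $L^{r}$-theory together with the Sobolev embedding $H^{2, r}(U) \hookrightarrow H^{1, \frac{dr}{d-r}}(U)$ (and Morrey's $H^{2,r} \hookrightarrow C^1$ once $r > d$) to progressively enlarge the integrability of $\nabla u$ until H\"older's inequality yields $\langle \bold{H} - \text{div}\,A, \nabla u\rangle \in L^\beta(U)$. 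With $g \in L^\beta$ in hand, the $L^\beta$-regularity theorem for non-divergence equations with $VMO$ coefficients on $C^{1,1}$ domains delivers $u \in H^{2,\beta}(U)$ together with an estimate of the form $\|u\|_{H^{2,\beta}(U)} \leq C\bigl(\|g\|_{L^\beta(U)} + \|u\|_{L^\beta(U)}\bigr)$; propagating the a priori bounds of Theorem \ref{mainexisthm} through this chain yields the stated $K_{16}$.

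Part (ii) follows the same recipe with only $\text{div}\,A \in L^d$, so the H\"older step must exploit the full strength of the Sobolev-improved gradient $\nabla u \in L^{\frac{d\beta}{d-\beta}}(U)$ coming from the final theorem of Section \ref{vmow1pdivse}; the condition $\beta < d$ is precisely what renders the exponent $\frac{d\beta}{d-\beta}$ finite. Corollary \ref{essenbdddiveq} may be used at intermediate steps to recast lower-order $L^\beta$ data as the divergence of an $L^{\frac{d\beta}{d-\beta}}$-vector field, handing the equation back to the divergence-form regularity machinery.

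For parts (iii) and (iv), decompose $A = \tilde{A} + C$ with $\tilde{A}$ symmetric and $C$ antisymmetric. The idea is to solve the symmetric non-divergence problem
\begin{equation*}
-\text{trace}(\tilde{A} \nabla^2 v) + \langle \bold{H} - \text{div}\,\tilde{A} - \text{div}\,C, \nabla v\rangle + c v = f
\end{equation*}
by the arguments of parts (i) and (ii), obtaining $v \in H^{2,\beta}(U) \cap H^{1,2}_0(U)$. One then checks that $v$ is automatically a weak solution of \eqref{maineq2}: the identity $\int_{U} \langle C \nabla v, \nabla \psi\rangle\,dx + \int_{U} \langle \text{div}\,C, \nabla v\rangle \psi\,dx = 0$ for $\psi \in C_0^{\infty}(U)$, obtained by integration by parts using antisymmetry of $C$ together with symmetry of $\nabla^2 v$ (so that $\text{trace}(C \nabla^2 v) = 0$), precisely bridges the strong $\tilde{A}$-non-divergence form and the weak $A$-divergence form. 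Uniqueness from Theorem \ref{mainexisthm}(ii) then identifies $v = u$, and the $H^{2,\beta}$-estimate is inherited. The main obstacle throughout is executing the gradient-integrability bootstrap cleanly enough to justify the non-divergence reformulation with $L^\beta$ data; once this is accomplished, the final $H^{2,\beta}$ estimate follows from standard $VMO$ Calder\'on--Zygmund theory together with careful bookkeeping of the constants from Theorem \ref{mainexisthm}, Theorem \ref{improinfinis}, and Corollary \ref{essenbdddiveq}.
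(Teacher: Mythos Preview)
Your overall strategy matches the paper's: move the zero-order term and drift to the right to obtain $\tilde g = f - cu + \langle \mathrm{div}\,A - \bold{H}, \nabla u\rangle$, bootstrap the integrability of $\nabla u$ until $\tilde g \in L^\beta(U)$, and then apply $VMO$ Calder\'on--Zygmund theory on the $C^{1,1}$ domain. The execution differs in a few places worth recording. In (i) the paper does not pass through the last theorem of Section~\ref{vmow1pdivse}; it starts the bootstrap from $\nabla u\in L^2$ and at each step solves the \emph{driftless} auxiliary problem $-\mathrm{trace}(A\nabla^2\tilde u)+\lambda_0\tilde u=\tilde g+\lambda_0 u$ via \cite[Theorem 8]{DK11}, then identifies $\tilde u=u$ by converting both to a common divergence form (Proposition~\ref{fdledivnondi}) and appealing to the uniqueness in \cite[Theorem 2.1(ii)]{KK19}. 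You leave this identification step implicit for (i) and (ii); it deserves to be spelled out, since the $VMO$ theorem produces an auxiliary solution rather than an a~priori estimate on $u$ itself, and Theorem~\ref{mainexisthm}(ii) is not directly applicable to the auxiliary equation. For (ii) the paper takes a shorter route than your bootstrap: since $\mathrm{div}\,A,\bold{H}\in L^d$ and $\beta<d$, \cite[Theorem 2.2]{KK19} directly yields $w\in H^{2,\beta}(U)\cap H_0^{1,\frac{d\beta}{d-\beta}}(U)$ solving the divergence-form equation with right side $g=f-cu$ (the drift is \emph{not} absorbed into $g$), and one uniqueness call to \cite[Theorem 2.1(i)]{KK19} finishes. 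For (iii) and (iv) the paper reverses your order of operations: it first uses Corollary~\ref{corolarconv} to rewrite the weak equation satisfied by $u$ with diffusion $\tilde A$ and drift $\bold{H}-\mathrm{div}\,C$, and then simply invokes (i) or (ii); your route (solve the symmetric problem for $v$, then identify $v=u$ via the antisymmetry identity and Theorem~\ref{mainexisthm}(ii)) is equivalent.
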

\begin{proof}
(i) Let $\tilde{g}:= f-cu +\langle {\rm div} A-\mathbf{H}, \nabla u \rangle \in L^{\beta \wedge \frac{2p}{p+2}}(U)$. \\ \\
{\sf \underline{Case 1})} Assume that $\beta \in (1, \frac{2p}{p+2}]$ (i.e. $\frac{1}{\beta} \geq \frac12 +\frac{1}{p}$). Then, $\tilde{g} \in L^{\beta}(U)$. By \cite[Theorem 8]{DK11}, there exists $\tilde{u} \in H^{2, \beta}(U) \cap H_0^{1, \frac{d\beta}{d-\beta}}(U)$ and a constant $\lambda_0>0$ which only depends on $d$, $\lambda$, $M$, $\beta$ and $\omega$ such that
\begin{equation} \label{nondiveqvmo}
- \text{trace}(A \nabla^2 \tilde{u}) + \lambda_0 \tilde{u} = \tilde{g} + \lambda_0 \, u
\end{equation}
and that
\begin{align*}
\|\tilde{u}\|_{H^{2, \beta}(U)} &\leq \bar{C} \| \tilde{g} + \lambda_0 u \|_{L^{\beta}(U)} \\
&\leq \bar{C} \Big( |U|^{\frac{1}{\beta}-\frac{1}{q}} \|f\|_{L^q(U)} + \big(|U|^{\frac{1}{\beta}-\frac{1}{s}} \|\tilde{c}\|_{L^s(U)} +\lambda_0 |U|^{\frac{1}{\beta}} \big)\|u\|_{L^{\infty}(U)} +2|U|^{\frac{1}{\beta}-\frac{1}{2}-\frac{1}{p}}
\|h\|_{L^p(U)} \|\nabla u \|_{L^2(U)}\Big)  \\
& \leq \bar{C} K _{15}\|f\|_{L^q(U)} ,
\end{align*}
where 
$K_{15}:=|U|^{\frac{1}{\beta}-\frac{1}{q}} + \big(|U|^{\frac{1}{\beta}-\frac{1}{s}} \|\tilde{c}\|_{L^s(U)} +\lambda_0 |U|^{\frac{1}{\beta}}  \big)K_{6} + 2\|h\|_{L^p(U)} |U|^{\frac{1}{\beta}-\frac{1}{2}-\frac{1}{p}+\frac{1}{2_*}-\frac{1}{q}} K_5$,\; $2_*$ is an arbitrarily fixed number in $(1, 2 \wedge q)$ if $d=2$, $K_5, K_{6}>0$ are constants as in Theorem \ref{mainexisthm} and $\bar{C}>0$ is a constant which only depends on $d$, $\lambda$, $M$, $r$, $\beta$ and $\omega$.
 Meanwhile, $u \in H^{1,2}_0(U)_b$
satisfies that
\begin{equation*} 
\int_{U} \langle A \nabla u, \nabla \psi \rangle dx  + \int_{U} \langle u\, \text{div} A, \nabla \psi \rangle dx   + \int_{U} \lambda_0 u \psi dx = \int_{U} \big(\tilde{g}+\lambda_0 u \big) \psi dx, \quad \forall \psi \in C_0^{\infty}(U).
\end{equation*}
Likewise, $\tilde{u} \in H^{\frac{d\beta}{d-\beta}}_0(U)$ fulfills from \eqref{nondiveqvmo} and Proposition \ref{fdledivnondi} that
\begin{equation*} 
\int_{U} \langle A \nabla \tilde{u}, \nabla \psi \rangle dx  + \int_{U} \langle \tilde{u}\, \text{div} A, \nabla \psi \rangle dx   + \int_{U} \lambda_0 \tilde{u} \psi dx = \int_{U} \big(\tilde{g}+\lambda_0 u \big) \psi dx, \quad \forall \psi \in C_0^{\infty}(U).
\end{equation*}
Finally applying the uniqueness result in \cite[Theorem 2.1(ii)]{KK19}, we get again $u=\tilde{u} \in H_0^{1,2 \wedge \frac{d\beta}{d-\beta}}(U)$ in $U$, as desired.  \\ \\
{\sf \underline{Case 2})} Assume that $\beta \in (\frac{2p}{p+2}, d]$ (i.e. $\frac{1}{d} \leq \frac{1}{\beta}<\frac{1}{2} + \frac{1}{p}$).
 Choose a unique $k \in \mathbb{N} \cup \{0\}$ such that
$$
\frac{1}{\beta} \in \left[	\frac{1}{2} +\frac{1}{p} + (k+1)
\left( - \frac{1}{d}+\frac{1}{p}\right), \;\; \frac{1}{2} +\frac{1}{p} + k
\left( - \frac{1}{d}+\frac{1}{p}\right)	\right).
$$
Now we claim that for each $m \in \{0,\ldots, k\}$, $u \in H_0^{1, \big( \frac{1}{2} +\frac{1}{p} + m
\left( - \frac{1}{d}+\frac{1}{p}\right) -\frac{1}{d}	\big)^{-1}}(U)$ and there exists a constant $\tilde{K}_m>0$ which only depends on $d$, $s$, $p$, $q$,  $r$, $\lambda$, $M$, $\|h\|_{L^p(U)}$, $\|\tilde{c}\|_{L^s(U)}$, such that
$$
\|u\|_{H_0^{1, \big( \frac{1}{2} +\frac{1}{p} + m
\left( - \frac{1}{d}+\frac{1}{p}\right)-\frac{1}{d}	\big)^{-1}}(U)} \leq \tilde{K}_{m} \|f\|_{L^q(U)}.
$$
Let us first consider the case of $m=0$ to show the claim. By \cite[Theorem 8]{DK11} and the calculation as in {\sf Case 1}, there exists $\tilde{u}_0 \in H^{2, \big( \frac{1}{2} + \frac{1}{p} \big)^{-1}}(U) \cap H_0^{1, \big( \frac{1}{2} +\frac{1}{p} -\frac{1}{d}	\big)^{-1}}(U)$ 
and a constant $\lambda_0>0$ which only depends on $d$, $\lambda$, $M$, $r$, $p$,  $s$, $q$ and $\omega$, such that
\begin{equation*} 
-\text{trace}(A \nabla^2 \tilde{u}_0) + \lambda_0 \tilde{u}_0 = \tilde{g} + \lambda_0 u
\end{equation*}
and that
$$
\|\tilde{u}_0 \|_{H_0^{1, \big( \frac{1}{2} +\frac{1}{p} -\frac{1}{d}	\big)^{-1}}(U)}\leq N_{0,d,p} \|\tilde{u}_0\|_{H^{2,\left( \frac12 +\frac{1}{p} \right)^{-1}}(U)} \leq \tilde{K} _{0} \|f\|_{L^q(U)},
$$
where $N_{0,d,p}>0$ is a constant which only depends on $d$ and $p$ and $\tilde{K} _{0}>0$ is a constant which only depends on $d$, $\lambda$, $M$, $r$, $p$, $s$, $q$, $\|h\|_{L^p(U)}$, $\|\tilde{c}\|_{L^s(U)}$ and $\omega$. As the argument in {\sf Case 1}, we get $u=\tilde{u}_0 \in H^{1,2}_0(U)$, so that the claim holds for $m=0$. Now suppose that $k \geq 1$ and the claim holds for some $m \in \{0, \ldots, k-1 \}$. Then, $\tilde{g}  =  f-cu +\langle {\rm div} A-\mathbf{H}, \nabla u \rangle  \in L^{\big( \frac{1}{2} +\frac{1}{p} + (m+1)
\left( - \frac{1}{d}+\frac{1}{p}\right) \big)^{-1}}(U)$.
 Using \cite[Theorem 8]{DK11}, there exists $\tilde{u}_{m+1} \in H^{2,\big( \frac{1}{2} +\frac{1}{p} + (m+1)
\left( - \frac{1}{d}+\frac{1}{p}\right) \big)^{-1}}(U)\cap H_0^{1, \big( \frac{1}{2} +\frac{1}{p} + (m+1)
\left( - \frac{1}{d}+\frac{1}{p}\right) - \frac{1}{d} \big)^{-1}}(U)$ 
and a constant $\lambda_{m+1}>0$ which only depends on $d$, $\lambda$, $M$, $r$. $p$, $s$, $q$ and $\omega$ such that
\begin{equation*} 
-\text{trace}(A \nabla^2 \tilde{u}_{m+1}) + \lambda_{m+1} \tilde{u}_{m+1} = \tilde{g} + \lambda_{m+1} u
\end{equation*}
and
$$
\|\tilde{u}_{m+1} \|_{H_0^{1, \big( \frac{1}{2} +\frac{1}{p} + (m+1)
\left( - \frac{1}{d}+\frac{1}{p}\right) - \frac{1}{d} \big)^{-1}}(U)}\leq N_{m+1,d,p} \|\tilde{u}_{m+1}\|_{H^{2,\big( \frac{1}{2} +\frac{1}{p} + (m+1)
\left( - \frac{1}{d}+\frac{1}{p}\right) \big)^{-1}}(U)} \leq \tilde{K} _{m+1} \|f\|_{L^q(U)},
$$
where $N_{m+1,d,p}>0$ is a constant which only depends on $d$ and $p$ and $\tilde{K}_{m+1}>0$ is a constant which only depends on $d$, $\lambda$, $M$, $r$, $p$, $s$, $q$, $\|h\|_{L^p(U)}$, $\|\tilde{c}\|_{L^s(U)}$ and $\omega$. As in the argument of {\sf Case 1}, we get $u=\tilde{u}_{m+1} \in H^{1,2}_0(U)$, and hence claim holds for $m+1$. Therefore, the claim is proven by the induction. Now observe that $\langle {\rm div} A-\mathbf{H}, \nabla u \rangle  \in L^{\big( \frac{1}{2} +\frac{1}{p} + (k+1)
\left( - \frac{1}{d}+\frac{1}{p}\right) \big)^{-1}}(U)$, \,so that $\tilde{g}=  f-cu +\langle {\rm div} A-\mathbf{H}, \nabla u \rangle \in L^{\beta}(U)$. Analogously to the arguments so far, there exists $\tilde{u}_{k+1} \in H^{2,\beta}(U)\cap H_0^{1,2}(U)$ 
and a constant $\lambda_{k+1}>0$  which only depends on $d$, $\lambda$, $M$, $r$, $p$, $s$, $q$ and $\omega$ such that
\begin{equation*} 
-\text{trace}(A \nabla^2 \tilde{u}_{k+1}) + \lambda_{k+1} \tilde{u}_{k+1} = \tilde{g} + \lambda_{k+1} u
\end{equation*}
and
$$
\|\tilde{u}_{k+1}\|_{H^{2, \beta}(U)} \leq \tilde{K} _{k+1} \|f\|_{L^q(U)},
$$
where $\tilde{K}_{k+1}>0$ is a constant  which only depends on $d$, $s$, $p$, $q$, $r$, $\lambda$, $M$, $\|h\|_{L^p(U)}$, $\|\tilde{c}\|_{L^s(U)}$ and $\omega$. As the argument in {\sf Case 1)}, we get $u=\tilde{u}_{k+1} \in H^{2, \beta}(U) \cap H^{1,2}_0(U)$, as desired.\\ \\
{\sf \underline{Case 3})} Assume that $\beta\in (d, \infty)$. Choose a unique $k \in \mathbb{N} \cup \{0\}$ such that
$$
\frac{1}{d} \in \left[	\frac{1}{2} +\frac{1}{p} + (k+1)
\left( - \frac{1}{d}+\frac{1}{p}\right), \;\; \frac{1}{2} +\frac{1}{p} + k
\left( - \frac{1}{d}+\frac{1}{p}\right)	\right).
$$
Then, analogously to the claim in {\sf Case 2}, we obtain that $u \in H_0^{1, \big( \frac{1}{2} +\frac{1}{p} + k
\left( - \frac{1}{d}+\frac{1}{p}\right) -\frac{1}{d} \big)^{-1}}(U)$ and there exists a constant $\bar{K}_k>0$ which only depends on $d$, $s$, $p$, $q$, $r$, $\lambda$, $M$, $\|h\|_{L^p(U)}$, $\|\tilde{c}\|_{L^s(U)}$ and $\omega$ such that
$$
\|u\|_{H_0^{1, \big( \frac{1}{2} +\frac{1}{p} + k
\left( - \frac{1}{d}+\frac{1}{p}\right)-\frac{1}{d}	\big)^{-1}}(U)} \leq \bar{K}_{k} \|f\|_{L^q(U)} ,
$$
Thus, $\tilde{g}=  f-cu +\langle {\rm div} A-\mathbf{H}, \nabla f \rangle \in L^{\beta \wedge \big( \frac{1}{2} +\frac{1}{p} +(k+1)
\left( - \frac{1}{d}+\frac{1}{p}\right)  \big)^{-1}}(U)$. As the argument in {\sf Case 1)}, we get  $u  \in H^{2, \beta \wedge \big( \frac{1}{2} +\frac{1}{p} +(k+1)
\left( - \frac{1}{d}+\frac{1}{p}\right)  \big)^{-1}}(U) \cap H^{1,2}_0(U)$ and
$$
\|u\|_{H^{2, \beta \wedge \big( \frac{1}{2} +\frac{1}{p} +(k+1)
\left( - \frac{1}{d}+\frac{1}{p}\right)  \big)^{-1}}(U)} \leq \bar{K}_{k+1}\|f\|_{L^q(U)},
$$
where $\bar{K}_{k+1}>0$ is a constant which only depends on $d$, $\lambda$, $M$, $r$ $p$, $s$, $q$, $\|h\|_{L^p(U)}$, $\|\tilde{c}\|_{L^s(U)}$ and $\omega$.
Now choose $t \in (d, \beta)$. Then, $\frac{1}{p} \leq \frac{1}{\beta}<\frac{1}{t}<\frac{1}{d}$.  Since  
$$
\frac{1}{\beta}<\frac{1}{t}<\frac{1}{t}-\frac{1}{p}+\frac{1}{d}, \qquad \;
\frac{1}{2} +\frac{1}{p} +(k+1)\left( - \frac{1}{d}+\frac{1}{p}\right) \leq \frac{1}{d}<\frac{1}{t}-\frac{1}{p}+\frac{1}{d},
$$ 
it holds that $u \in H^{2, \left(\frac{1}{t}-\frac{1}{p}+\frac{1}{d} \right)^{-1}}(U)$, so that $u \in H^{1,\left(\frac{1}{t}-\frac{1}{p} \right)^{-1}}_0(U)$ and
$$
\|u\|_{H^{1,(\frac{1}{t}-\frac{1}{p})^{-1}}(U)} \leq \bar{N}(d,p, |U|) \bar{K}_{k+1} \|f\|_{L^q(U)},
$$
where $\bar{N}(d,p, |U|)>0$ is a constant which only depends on $d$, $p$ and $|U|$.
Thus, we have $\tilde{g}=  f-cu +\langle {\rm div} A-\mathbf{H}, \nabla u \rangle \in L^{t}(U)$, and hence from the argument as in {\sf Case 1)} we discover that 
$u \in H^{2,t}(U) \cap H^{1, \infty}(U) \cap H^{1,2}_0(U)$ and 
$$
\|u\|_{H^{1,\infty}(U) } \leq \bar{K}_{k+2} \|f\|_{L^q(U)},
$$
where $\bar{K}_{k+2}>0$ is a constant which only depends on  $d$, $\lambda$, $M$, $r$, $p$, $q$, $\|h\|_{L^p(U)}$, $\|\tilde{c}\|_{L^s(U)}$ and $\omega$. Finally, note that $\tilde{g}=  f-cu +\langle {\rm div} A-\mathbf{H}, \nabla u \rangle \in L^{\beta}(U)$, and hence from the argument as in {\sf Case 1} we conclude that $u \in H^{2, \beta}(U) \cap H^{1,2}_0(U)$ and
$$
\|u\|_{H^{2,\beta}(U) } \leq \bar{K}_{k+3} \|f\|_{L^q(U)},
$$
where $\bar{K}_{k+3}>0$ is a constant which only depends on $d$, $\lambda$, $M$, $r$, $p$, $q$, $\|h\|_{L^p(U)}$, $\|\tilde{c}\|_{L^s(U)}$ and $\omega$, as desired. \\ \\
(ii)
Define $g:=f-cu \in L^\beta(U)$. Note that $1<\beta<d$ and $d \geq 3$ and $\text{div}A \in L^d(U, \mathbb{R}^d)$, $\bold{H} \in L^d(U, \mathbb{R}^d)$. Thus,
\cite[Theorem 2.2]{KK19} yields the existence of $w \in H^{2, \beta}(U) \cap H^{1,\frac{d\beta}{d-\beta}}_0(U)$ satisfying
\begin{equation*} 
\int_U \langle A \nabla w, \nabla \psi \rangle + \langle \bold{H}, \nabla w \rangle \psi  \,dx = \int_U g \psi \,dx \quad \text{ for all } \psi \in C_0^{\infty}(U).
\end{equation*}
Additionally using \cite[Theorem 2.2]{KK19} and Theorem \ref{mainexisthm}, we get
\begin{align*}
\|w\|_{H^{2,p}(U)} &\leq Q_2 \|g\|_{L^{\beta}(U)} \leq Q_2 \Big( |U|^{\frac{1}{\beta}-\frac{1}{q}} \|f\|_{L^q(U)}  + |U|^{\frac{1}{\beta}-\frac{1}{s}} \| \tilde{c}\|_{L^s(U)} \|u\|_{L^{\infty}(U)}  \Big) \\
& \leq Q_2 K_{17}\|f\|_{L^q(U)},
\end{align*}
where  $Q_2>0$ is a constant independent of  $f$, $c$, $\tilde{c}$ and $u$ as in \cite[Theorem 2.2]{KK19}. Now it is enough to show that $w=u$ in $U$.
Meanwhile, $u \in H_0^{1,2 \wedge \frac{d\beta}{d-\beta}}(U)$ satisfies
\begin{equation*} 
\int_U \langle A \nabla u, \nabla \psi \rangle + \langle \bold{H}, \nabla u \rangle \psi  dx = \int_U g \psi dx \quad \text{ for all } \psi \in C_0^{\infty}(U).
\end{equation*}
Using the uniqueness results in \cite[Theorem 2.1(i)]{KK19}, we finally get $w=u \in H_0^{1,2 \wedge \frac{d\beta}{d-\beta}}(U)$, as desired. \\ \\
(iii) Observe that by Corollary \ref{corolarconv} and Theorem \ref{mainexisthm}, $u$ is also a unique weak solution to \eqref{maineq2} where $A$ and $\bold{H}$ are replaced by 
$\tilde{A}$ and $-{\rm div}\,C + \bold{H} \in L^p(U, \mathbb{R}^d)$. Thus, the assertion follows from Theorem \ref{vmonondivh1p}(i) where $A$ and $\bold{H}$ are replaced by $\tilde{A}$ and $-{\rm div} \,C + \bold{H}$, respectively. \\ \\
(iv) 
By Corollary \ref{corolarconv}, $u \in H_0^{1,2}(U)$ satisfies
\begin{equation} \label{convertingdiv}
\int_U \langle \tilde{A} \nabla u, \nabla \psi \rangle + \langle {\rm div}\,\tilde{A}+\bold{H}, \nabla u \rangle \psi  dx = \int_U g \psi dx \quad \text{ for all } \psi \in C_0^{\infty}(U),
\end{equation}
where $g:=f-cu \in L^\beta(U)$.
Meanwhile, since ${\rm div} \tilde{A}+\bold{H} \in L^d(U, \mathbb{R}^d)$, \,\cite[Theorem 2.2]{KK19} implies the existence of $w \in H^{2, \beta}(U) \cap H^{1,\frac{d\beta}{d-\beta}}_0(U)$ satisfying that
\begin{equation} \label{weakdefzvmodiv2}
\int_U \langle \tilde{A} \nabla w, \nabla \psi \rangle + \langle {\rm div} \tilde{A}+\bold{H}, \nabla w \rangle \psi  \,dx = \int_U g \psi \,dx \quad \text{ for all } \psi \in C_0^{\infty}(U).
\end{equation}
Additionally using \cite[Theorem 2.2]{KK19} and Theorem \ref{mainexisthm}, we get
$$
\|u\|_{H^{2,\beta}(U) } \leq \tilde{Q}_2 K_{17} \|f\|_{L^q(U)},
$$
where  $\tilde{Q}_2>0$ is a constant independent of  $f$, $c$, $\tilde{c}$ and $u$ as in \cite[Theorem 2.2]{KK19}.
Applying the uniqueness results in \cite[Theorem 2.1(i)]{KK19} to \eqref{convertingdiv} and \eqref{weakdefzvmodiv2}, we finially get $w=u \in H_0^{1,2 \wedge \frac{d\beta}{d-\beta}}(U)$, as desired. 
\end{proof}

\begin{rem} \label{slightremar}
In the case of $d \geq 3$, \cite[Theorem 8]{DK11} crucially used in the proof of Theorem \ref{vmonondivh1p}(i) can be replaced with \cite[Theorem 4.4]{CFL93}, and in this case, the lower bound $\lambda_m$ can be replaced with $0$. However, since we want to include the case of $d=2$, we have used \cite[Theorem 8]{DK11} in the proof of Theorem \ref{vmonondivh1p}(i).
\end{rem}

\noindent
Now we are ready to prove our second main result, Theorem \ref{seconmainres},\\
\centerline{}
\noindent
{\bf Proof of Theorem \ref{seconmainres})} Define $\bold{H}:= \widehat{\bold{H}} + {\rm div} A \in L^p(U, \mathbb{R}^d)$. Let $u\in H^{1,2}_0(U)_b$ be a (unique) weak solution to \eqref{maineq2} as in Theorem \ref{mainexisthm}. In particular, $u$ satisfies \eqref{normalcontrac}  by Theorem \ref{mainexisthm}. Moreover, by Theorem \ref{vmonondivh1p}(i), we deduce that $u \in H^{2,\beta}(U) \cap H^{1,2}_0(U)_b$. Applying integration by parts to \eqref{weakdefzerve} we get
\begin{equation} \label{strongintfrom}
\int_{U} \Big(-\text{\rm trace}(A \nabla^2 u) + \langle -{\rm div} A + \bold{H}, \nabla u \rangle + c u \Big) \psi dx =  \int_{U} f \psi dx \quad \text{ for all } \psi \in C_0^{\infty}(U),\;
\end{equation}
and hence $u$ is a strong solution to \eqref{maineqnondiv}.  
Now assume that $v$ is a strong solution to \eqref{maineqnondiv}, i.e. \eqref{strongformiden} holds where $u$ is replaced by $v$. Then, applying integration by parts to \eqref{strongintfrom} where $u$ is replaced by $v$, we get
$$
\int_{U} \langle A \nabla v, \nabla \psi \rangle dx  + \int_{U} \langle \bold{H}, \nabla v \rangle dx +\int_{U}cv \psi dx =  \int_{U} f \psi dx \quad \text{ for all } \psi \in C_0^{\infty}(U).
$$
Using the uniqueness result in Theorem \ref{mainexisthm}, we get $v=u$, as desired. 
{\vskip 0.1cm \hfill$\Box$}

\begin{rem} \label{posteriana}
Here we mention that the contraction estimate \eqref{normalcontrac} with $\theta=2$ is significant in the error analysis of Physics-Informed Neural Networks (see \cite{YL24}). For instance, under the assumption of Theorem \ref{seconmainres}, additionally assume that $\gamma>0$ is a constant, $c \in L^2(U)$ with $c \geq \gamma$ in $U$ and $f \in L^q(U) \cap L^2(U)$. Let $u \in H^{2,2}(U) \cap H^{1,2}_0(U)_b$ be a unique strong solution to \eqref{maineqnondiv} as in Theorem \ref{seconmainres}.  Let $\Phi_A \in H^{2,2}(U) \cap H^{1,2}_0(U)_b$. Then, $u-\Phi_A \in H^{2,2}(U)\cap H^{1,2}_0(U)_b$ 
and
$$
\mathcal{L}[\Phi_A]:=-\text{\rm trace}(A\nabla^2 \Phi_A) + \langle \widehat{\bold{H}}, \nabla \Phi_A \rangle + c\, \Phi_A \in L^2(U).
$$
Observe that by the linearity
$$
-\text{\rm trace}\Big(A(x) \nabla^2 (u-\Phi_A)(x) \Big) + \Big \langle \widehat{\bold{H}}(x), \nabla (u-\Phi_A)(x) \Big \rangle + c(x) u(x) =\Big(f-\mathcal{L}[\Phi_A] \Big)(x), \quad \text{ for a.e. $x \in U$}.
$$
Thus, applying Theorem \ref{seconmainres}(iii) to the above, we obtain that
\begin{equation} \label{pinnerrorest}
\int_{U} |u-\Phi_A|^2 dx \leq \frac{K_1^2}{\gamma^2} \int_U \big(  f- \mathcal{L}[\Phi] \big)^2 dx,
\end{equation}
where $K_1>0$ is a constant as in Theorem \ref{existrho}, which is independent of $\gamma>0$. By training $\Phi$ to minimize a loss function related to $\int_U \big(  f- \mathcal{L}[\Phi] \big)^2 dx$, \,$\Phi_A$ can be a nice $L^2$-approximation of $u$ which has the $L^2$-error estimates \eqref{pinnerrorest}. By \eqref{divtypeequat} and \eqref{imptcontrac} in Remark \ref{imptremaforme}(ii), the constant $K_1>0$ in \eqref{pinnerrorest} can be replaced by any constant $\tilde{K}_1>0$ satisfying
$$
\frac{\max_{\overline{U}}\tilde{\rho}}{\min_{\overline{U}} \tilde{\rho}} \leq \tilde{K_1},
$$
where $\tilde{\rho} \in H^{1,2}(U) \cap C(U)$ is a function satisfying \eqref{infinitesiminv} and $\tilde{\rho}(x)>0$ for all $x \in \overline{U}$
(see Remark \ref{imptremaforme}).
\end{rem}
\centerline{}
\appendix
\section{Auxiliary results} \label{auxilarlse}
The following are auxiliary results essential for proving the main results of this paper. While similar or identical results may be found in other references, the statements and proofs of these auxiliary results are provided here in full detail for the reader's convenience.
\begin{prop} 
Let $U$ be an open subset of $\mathbb{R}^d$, $\bold{E}=(e_1, \ldots, e_d) \in L^1_{loc}(U, \mathbb{R}^d)$ and $B=(b_{ij})_{1 \leq i,j \leq d}$ be a (possibly non-symmetric) matrix of functions in $L^1_{loc}(U)$ such that ${\rm div} B = \bold{E}$ in $U$. For each $1 \leq j \leq d$,
let $\phi_j \in C_0^{\infty}(U)$ and $V$ be a bounded open subset of $\mathbb{R}^d$ with $\text{supp}(\phi_j) \subset V \subset \overline{V} \subset U$ for all $1 \leq j \leq d$. Let $B_n=(b_{ij}^n)_{1 \leq i,j \leq d}$ be a sequence of matrices of functions satisfying that $b_{ij}^n \in C^{1}(\overline{V})$ for all $n \geq 1$, $1 \leq i,j \leq d$ and
\begin{equation*} 
\lim_{m \rightarrow \infty} b_{ij}^n = b_{ij} \; \text{ in } L^1(V), \quad \text{ for all } 1 \leq i,j \leq d.
\end{equation*}
Then,
\begin{equation} \label{limidivequat}
\lim_{n \rightarrow \infty} -\int_{U} \sum_{j=1}^d \Big(\sum_{i=1}^d \partial_i b^n_{ij} \Big) \phi_j\,dx = -\int_{U} \sum_{j=1}^d e_j \phi_j\,dx.
\end{equation}
\end{prop}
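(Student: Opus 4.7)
The plan is to apply classical integration by parts on the left-hand side (which is legitimate since each $b_{ij}^n$ is $C^1$ on $\overline{V}$ and $\phi_j$ has compact support inside $V$), reducing the problem to passing to the limit on an expression involving only $b_{ij}^n$ (without derivatives), where the $L^1$ convergence applies directly. Once the limit is identified, the definition of the distributional divergence ${\rm div}\,B = \bold{E}$ yields the right-hand side on the nose.

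First I would fix $V$ with $\mathrm{supp}(\phi_j)\subset V\subset\overline{V}\subset U$ for every $j$ (take the union of supports if needed), and note that since $\phi_j$ vanishes on $\partial V$, integration by parts on $V$ gives
\begin{equation*}
-\int_U \sum_{j=1}^d \Big(\sum_{i=1}^d \partial_i b^n_{ij}\Big)\phi_j\,dx = -\int_V \sum_{i,j} (\partial_i b^n_{ij})\phi_j\,dx = \int_V \sum_{i,j} b^n_{ij}\,\partial_i\phi_j\,dx.
\end{equation*}
The regularity $b^n_{ij}\in C^1(\overline V)$ together with $\phi_j\in C_0^\infty(V)$ makes this step genuinely elementary, with no boundary terms.

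Next I would estimate the difference from the limiting integrand using the $L^1(V)$ convergence $b^n_{ij}\to b_{ij}$ and the boundedness of $\partial_i\phi_j$:
\begin{equation*}
\Bigg|\int_V \sum_{i,j} b^n_{ij}\,\partial_i\phi_j\,dx - \int_V \sum_{i,j} b_{ij}\,\partial_i\phi_j\,dx\Bigg| \leq \sum_{i,j}\|\partial_i\phi_j\|_{L^\infty(V)}\,\|b^n_{ij}-b_{ij}\|_{L^1(V)}\longrightarrow 0.
\end{equation*}
Finally, because $\mathrm{supp}(\phi_j)\subset V$ and because $\phi_j\in C_0^\infty(U)$, the hypothesis ${\rm div}\,B=\bold{E}$ (Definition \ref{divecdefn}) applied to the test functions $\phi_1,\ldots,\phi_d$ gives
\begin{equation*}
\int_V \sum_{i,j} b_{ij}\,\partial_i\phi_j\,dx = \int_U \sum_{i,j} b_{ij}\,\partial_i\phi_j\,dx = -\int_U \sum_{j=1}^d e_j\phi_j\,dx,
\end{equation*}
which is precisely \eqref{limidivequat}.

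There is really no significant obstacle here; the only thing to watch is that each $b^n_{ij}$ is only assumed to be $C^1$ on $\overline V$ (not on $U$), so one must localize the integration-by-parts step to $V$ using that $\phi_j$ is compactly supported inside $V$. Once that is done, both the approximation limit and the distributional-divergence identity combine immediately.
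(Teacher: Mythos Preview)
Your proof is correct and follows essentially the same approach as the paper: integrate by parts to remove the derivatives from $b_{ij}^n$, pass to the limit using the $L^1(V)$ convergence against the bounded test functions $\partial_i\phi_j$, and then invoke the definition of ${\rm div}\,B=\bold{E}$. Your write-up is in fact slightly more careful than the paper's, in that you explicitly localize the integration by parts to $V$ and spell out the convergence estimate.
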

\begin{proof}
Observe that from integration by parts
\begin{equation} \label{nequdivint}
-\int_{U} \sum_{j=1}^d \Big(\sum_{i=1}^d \partial_i b^n_{ij} \Big) \phi_j\,dx = \int_{U} \sum_{i,j=1}^d b^n_{ij} \partial_i \phi_j \,dx.
\end{equation}
Letting  $n \rightarrow \infty$ in \eqref{nequdivint}, we get
$$
\lim_{n \rightarrow \infty} -\int_{U} \sum_{j=1}^d \Big(\sum_{i=1}^d \partial_i b^n_{ij} \Big) \phi_j\,dx =\int_{U} \sum_{i,j=1}^d b_{ij} \partial_i \phi_j\, dx = - \int_{U} \sum_{j=1}^d e_{j} \phi_j dx,
$$
as desired.
\end{proof}

\begin{prop} \label{fdledivnondi}
Let $U$ be an open subset of $\mathbb{R}^d$, $A=(a_{ij})_{1 \leq i,j \leq d}$ be a (possibly non-symmetric) matrix of functions in $L^{\infty}_{loc}(U)$ with ${\rm div} A=(e_1, \ldots, e_d) \in L_{loc}^2(U, \mathbb{R}^d)$. Let $u \in L_{loc}^{1}(U)$ with $\nabla u \in L^2_{loc}(U, \mathbb{R}^d)$
and $\partial_{i} \partial_j u \in L^1_{loc}(U)$ for all $1 \leq i,j \leq d$. Then,
$$
\int_{U} \langle A \nabla u, \nabla \phi \rangle dx = -\int_{U} \Big({\rm trace} (A \nabla^2 u) + \langle {\rm div}A, \nabla u \rangle \Big) \phi \, dx, \quad \text{ for all $\phi\in C_0^{\infty}(U)$}.
$$
\end{prop}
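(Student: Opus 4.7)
The plan is to prove the identity by approximating the coefficients $a_{ij}$ by smooth functions via mollification, applying the classical integration-by-parts formula in the smooth setting, and passing to the limit. Fix $\phi \in C_0^\infty(U)$ and choose a bounded open $V$ with $\mathrm{supp}(\phi) \subset V$ and $\overline{V} \subset U$. Let $\{\eta_n\}_{n \geq 1}$ be a standard mollifier family and set $a_{ij}^n := a_{ij} * \eta_n$ on $V$, which is well-defined for $n$ large enough that $\mathrm{supp}(\eta_n) + V \Subset U$. Then $a_{ij}^n \in C^\infty(\overline{V})$, $a_{ij}^n \to a_{ij}$ both a.e.\ on $V$ and in $L^p(V)$ for every $p < \infty$, and the family $\{a_{ij}^n\}$ is uniformly bounded on $V$ by a constant $C_V$ depending only on $\|a_{ij}\|_{L^\infty(V')}$ for a slightly larger $V' \Subset U$.

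For each fixed $n$, since $a_{ij}^n \in C^\infty(\overline{V})$, the classical Leibniz rule applied to the weakly differentiable function $a_{ij}^n \partial_j u \, \phi$ — which is compactly supported in $V$ — yields after integration
\begin{equation*}
\int_V a_{ij}^n \, \partial_j u \, \partial_i \phi \, dx = -\int_V (\partial_i a_{ij}^n)\, \partial_j u \, \phi \, dx - \int_V a_{ij}^n \, \partial_i \partial_j u \, \phi \, dx,
\end{equation*}
and summing over $i,j$ produces the desired identity with $A^n$ in place of $A$. The rigorous justification at this step uses only that $\partial_j u \in L^2(V)$, $\partial_i \partial_j u \in L^1(V)$ and $\phi \in C_0^\infty(V)$, together with the $C^\infty$ regularity of $a_{ij}^n$.

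It then remains to pass $n \to \infty$ in each term. The left-hand side converges to $\int_V \langle A \nabla u, \nabla \phi \rangle\,dx$ by the Lebesgue dominated convergence theorem, since $a_{ij}^n \to a_{ij}$ a.e., $|a_{ij}^n| \le C_V$ uniformly, and the dominant $C_V |\nabla u|\,\|\nabla \phi\|_\infty$ belongs to $L^1(V)$. The trace term converges to $\int_V a_{ij} \partial_i \partial_j u \, \phi \,dx$ by exactly the same argument, dominated by $C_V |\partial_i \partial_j u|\,\|\phi\|_\infty \in L^1(V)$. For the divergence term, the key observation is that mollification commutes with distributional differentiation, so that on $V$
\begin{equation*}
\sum_{i=1}^d \partial_i a_{ij}^n \,=\, \Big(\sum_{i=1}^d \partial_i a_{ij}\Big) * \eta_n \,=\, e_j * \eta_n,
\end{equation*}
and $e_j * \eta_n \to e_j$ in $L^2(V)$ by standard mollifier properties. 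Since $\partial_j u \, \phi \in L^2(V)$ with compact support in $V$, Cauchy–Schwarz gives $\int_V (\mathrm{div}\,A^n)_j\, \partial_j u\, \phi\, dx \to \int_V e_j\, \partial_j u\, \phi\, dx$, completing the proof.

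The main obstacle is the treatment of the divergence term, because $\mathrm{div}\,A$ is defined distributionally and only couples naturally to $C_0^\infty$ test functions through Definition \ref{divecdefn}, whereas after smoothing we must pair $\mathrm{div}\,A^n$ with the merely $L^2$ function $\partial_j u\, \phi$. The identity $\sum_i \partial_i a_{ij}^n = e_j * \eta_n$ — a hallmark of Friedrichs mollification — circumvents this obstruction by upgrading distributional convergence of divergences to $L^2_{loc}$ convergence. An alternative route is to invoke the preceding proposition (equation \eqref{limidivequat}) combined with a density step approximating $\partial_j u \, \phi$ by $C_0^\infty(V)$ functions in $L^2$, but the mollifier identity is the more direct way.
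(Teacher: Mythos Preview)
Your proof is correct and takes a genuinely different (and somewhat cleaner) route from the paper. The paper performs a \emph{double} approximation: it mollifies both $A$ and $u$, choosing $u_m \in C^\infty(\overline{V})$ with $u_m \to u$ in $H^{2,1}(V)$ and $\nabla u_m \to \nabla u$ in $L^2(V)$, and $A_n$ with $a_{ij}^n \to a_{ij}$ in $L^1(V)$. The inner limit $n \to \infty$ (with $u_m$ fixed and smooth) is handled precisely via the preceding proposition, equation \eqref{limidivequat}, applied to the test functions $\phi_j = \phi\,\partial_j u_m \in C_0^\infty(V)$; the outer limit $m \to \infty$ then uses the strong convergences of $u_m$. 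By contrast, you mollify only $A$ and exploit the Friedrichs identity $\sum_i \partial_i a_{ij}^n = e_j * \eta_n$, which upgrades the distributional divergence to an honest $L^2_{loc}$-convergent sequence and lets you pair it directly with $\partial_j u\,\phi \in L^2(V)$. Your approach avoids the auxiliary approximation of $u$ and the reliance on Proposition~\eqref{limidivequat}, at the modest price of needing the product rule for $a_{ij}^n \cdot \partial_j u$ with $\partial_j u \in W^{1,1}_{loc}$ (which is standard since $a_{ij}^n$ is smooth). The paper's route, on the other hand, never needs mollification to commute with the divergence of $A$, since it only ever applies $\mathrm{div}\,A^n$ against genuine $C_0^\infty$ test functions before passing to the $L^1$ limit in the coefficients.
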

\begin{proof}
Let $\phi \in C_0^{\infty}(U)$ be fixed. Take a bounded open subset $V$ of $\mathbb{R}^d$ with $\text{supp}(\phi) \subset V \subset \overline{V} \subset U$. Then, $u \in H^{2,1}(V)$ and $\nabla u \in L^2(V)$, $\text{div} A \in L^2(V, \mathbb{R}^d)$. Using a mollification, take a sequence of functions $(u_m)_{m \geq 1}$ in $C^{\infty}(\overline{V})$ such that
$$
\lim_{m \rightarrow \infty} u_m = u \quad \text{ in } H^{2,1}(V) \quad \text{ and } \quad \lim_{m \rightarrow \infty} \nabla u_m = \nabla u \quad \text{ in } L^{2}(V, \mathbb{R}^d).
$$
Let $A_n=(a_{ij}^n)_{1 \leq i,j \leq d}$ be a sequence of matrices of functions satisfying that $a_{ij}^n \in C_0^{\infty}(\overline{V})$ for all $n \geq 1$, $1 \leq i,j \leq d$ and
$$
\lim_{n \rightarrow \infty} a_{ij}^n = a_{ij} \; \text{ in } L^1(V), \quad \text{ for all } 1 \leq i,j \leq d.
$$
Thus,
\begin{align*}
&\int_{U} \langle A \nabla u, \nabla \phi \rangle dx = \int_{U} \sum_{i,j=1}^d a_{ij} \partial_j u \,\partial_i \phi \,dx  = \lim_{m \rightarrow \infty} \lim_{n \rightarrow \infty} \int_{U} \sum_{i,j=1}^d a^n_{ij} \partial_j u_m \,\partial_i \phi \,dx \\
&= \lim_{m \rightarrow \infty} \lim_{n \rightarrow \infty} -\int_{U} \sum_{i,j=1}^d \text{trace}(A_n \nabla^2 u_m)\,\phi+  \sum_{j=1}^d \Big(\sum_{i=1}^d \partial_i a_{ij}^n\Big) \Big( \phi \partial_j u_m \Big) dx \\
&\underset{\text{by \eqref{limidivequat}}}{=}\lim_{m \rightarrow \infty} -\int_{U} \sum_{i,j=1}^d \text{trace}(A \nabla^2 u_m) \, \phi+  \sum_{j=1}^d e_j \Big( \phi \partial_j u_m \Big) dx \\
&=  -\int_{U} \Big({\rm trace} (A \nabla^2 u) + \langle {\rm div}A, \nabla u \rangle \Big) \phi \, dx.
\end{align*}
\end{proof}

\begin{cor} \label{corolarconv}
Let $U$ be an open subset of $\mathbb{R}^d$, $C=(c_{ij})_{1 \leq i,j \leq d}$ be an anti-symmetric matrix of functions (i.e. $C=-C^T$) in $L^{\infty}_{loc}(U)$ with ${\rm div}\,C \in L_{loc}^2(U, \mathbb{R}^d)$ and $u \in L^1_{loc}(\mathbb{R}^d)$ with $\nabla u \in L^2_{loc}(U, \mathbb{R}^d)$. Then,
$$
\int_{U} \langle C \nabla u, \nabla \phi \rangle dx = - \int_{U} \langle {\rm div} C, \nabla u \rangle \phi\, dx, \quad \text{ for all $\phi \in C_0^{\infty}(U)$}.
$$
\end{cor}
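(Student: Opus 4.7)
The strategy is to exploit antisymmetry of $C$ against the symmetry of the Hessian, combined with Proposition \ref{fdledivnondi}, after smoothing $u$ so that it has second derivatives.

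Fix $\phi \in C_0^\infty(U)$ and choose a bounded open set $V$ with $\mathrm{supp}(\phi) \subset V \subset \overline{V} \subset U$. Since $u \in L^1_{loc}(U)$ and $\nabla u \in L^2_{loc}(U, \mathbb{R}^d)$, a standard mollification yields a sequence $(u_m) \subset C^\infty(\overline{V})$ with $u_m \to u$ in $L^1(V)$ and $\nabla u_m \to \nabla u$ in $L^2(V, \mathbb{R}^d)$. For each $m$, $u_m$ satisfies the hypotheses of Proposition \ref{fdledivnondi} (all second weak derivatives exist and are locally integrable), so that
$$
\int_U \langle C \nabla u_m, \nabla \phi \rangle\, dx = -\int_U \Big( \mathrm{trace}(C \nabla^2 u_m) + \langle \mathrm{div}\, C, \nabla u_m \rangle \Big) \phi\, dx.
$$

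The first integrand on the right vanishes pointwise a.e.: since $\nabla^2 u_m$ is a symmetric matrix of continuous functions and $C$ is antisymmetric in $L^\infty_{loc}(U)$,
$$
\mathrm{trace}(C \nabla^2 u_m) = \sum_{i,j=1}^d c_{ij}\, \partial_i \partial_j u_m = \sum_{i,j=1}^d c_{ji}\, \partial_j \partial_i u_m = -\sum_{i,j=1}^d c_{ij}\, \partial_i \partial_j u_m,
$$
so this quantity equals its own negative and is therefore zero a.e.\ in $V$. Hence
$$
\int_U \langle C \nabla u_m, \nabla \phi \rangle\, dx = -\int_U \langle \mathrm{div}\, C, \nabla u_m \rangle\, \phi\, dx.
$$

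It remains to pass to the limit $m \to \infty$. On the left, $C \in L^\infty(V)$ and $\phi$ is bounded with support in $V$, so $\nabla u_m \to \nabla u$ in $L^2(V, \mathbb{R}^d)$ implies convergence of the integral to $\int_U \langle C \nabla u, \nabla \phi \rangle\, dx$. On the right, $\mathrm{div}\, C \in L^2(V, \mathbb{R}^d)$ together with $\nabla u_m \to \nabla u$ in $L^2(V, \mathbb{R}^d)$ and the boundedness of $\phi$ give $L^1$-convergence of $\langle \mathrm{div}\, C, \nabla u_m \rangle \phi$ to $\langle \mathrm{div}\, C, \nabla u \rangle \phi$. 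This yields the claimed identity.

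The only subtle point I anticipate is making sure the local integrability assumptions are strong enough to permit the mollification argument with the uniformity needed over $V$; this is standard and relies on the local nature of the hypotheses combined with the compact support of $\phi$. No additional smoothing of $C$ is required because Proposition \ref{fdledivnondi} already handles merely $L^\infty_{loc}$ coefficients with $L^2_{loc}$ divergence, and the antisymmetry is used only through the pointwise vanishing of $\mathrm{trace}(C \nabla^2 u_m)$.
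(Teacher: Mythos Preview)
Your proof is correct and follows essentially the same approach as the paper: mollify $u$ on a neighborhood of $\mathrm{supp}(\phi)$, apply Proposition~\ref{fdledivnondi} to each $u_m$, use the antisymmetry of $C$ against the symmetry of $\nabla^2 u_m$ to kill the trace term, and pass to the limit using $\nabla u_m \to \nabla u$ in $L^2(V,\mathbb{R}^d)$. Your write-up is in fact slightly more explicit than the paper's in justifying the vanishing of $\mathrm{trace}(C\nabla^2 u_m)$ and the limit passages.
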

\begin{proof}
Let $\phi \in C_0^{\infty}(U)$ be fixed. Take a bounded open subset $V$ of $U$ with $\text{supp}(\phi) \subset V \subset \overline{V} \subset U$.
Using a mollification, choose a sequence of functions $(u_m)_{m \geq 1}$ in $C^{\infty}(\overline{V})$ such that
$$
\lim_{m \rightarrow \infty} \nabla u_m = \nabla u \quad \text{ in } L^{2}(V, \mathbb{R}^d).
$$
Then using Proposition \ref{fdledivnondi}, 
\begin{align*}
\int_{U} \langle C \nabla u, \nabla \phi \rangle dx& = \lim_{m \rightarrow \infty} \int_{U} \langle C \nabla u_m, \nabla \phi \rangle dx\\
&=\lim_{m \rightarrow \infty}-\int_{U} \Big({\rm trace} (C \nabla^2 u_m) + \langle {\rm div}C, \nabla u_m \rangle \Big) \phi \, dx  \\
&=\lim_{m \rightarrow \infty} -\int_{U} \langle {\rm div}C, \nabla u_m \rangle \phi \, dx= - \int_{U} \langle {\rm div} C, \nabla u \rangle \phi\, dx,
\end{align*}
as desired.
\end{proof}
\centerline{}
\noindent
The following lemma is a direct consequence of Hardy-Littlewood-Sobolev inequality, but we present the statement and its proof for the reader's accessibility.
\begin{lem} \label{rieszsobol}
Let $f \in L^{\beta}(\mathbb{R}^d)$ with $\beta \in (1, d)$ and $d \geq 2$. Then, $\mathcal{T}f$ defined by
$$
\mathcal{T}f(x):= \int_{\mathbb{R}^d} \frac{f(y)}{\|x-y\|^{d-1}} dy
$$
converges absolutely for a.e. $x \in \mathbb{R}^d$. Moreover, $\mathcal{T}f \in L^{\frac{d\beta}{d-\beta}}(\mathbb{R}^d)$ and
\begin{equation*}
\|\mathcal{T}f \|_{L^{\frac{d\beta}{d-\beta}}(\mathbb{R}^d)} \leq c(d, \beta) \|f\|_{L^{\beta}(\mathbb{R}^d)},
\end{equation*}
where $c(d, \beta)>0$ is a constant which only depends on $d$ and $\beta$.
\end{lem}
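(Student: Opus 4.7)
\textbf{Proof proposal for Lemma \ref{rieszsobol}.}
The plan is to recognize $\mathcal{T}f$ as the convolution of $f$ with the Riesz kernel $K(x) := \|x\|^{-(d-1)}$, and to obtain the stated bound by either (a) quoting the classical Hardy--Littlewood--Sobolev inequality for the Riesz potential $I_1$, or (b) giving a self-contained derivation using the splitting method together with Marcinkiewicz interpolation. Since the paper advertises the result as a ``direct consequence'' of HLS, I would state the reduction and then execute the splitting argument explicitly, since this is what makes the constant $c(d,\beta)$ transparent and confirms the almost-everywhere absolute convergence of the defining integral.

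The concrete steps would be as follows. First, without loss of generality replace $f$ by $|f|$; it then suffices to prove the endpoint weak-type estimate
\[
\big| \{ x \in \mathbb{R}^d : \mathcal{T}|f|(x) > t \} \big| \leq C(d,\beta) \left( \frac{\|f\|_{L^{\beta}(\mathbb{R}^d)}}{t}  \right)^{\frac{d\beta}{d-\beta}}, \qquad t>0,
\]
for all $\beta \in (1,d)$, since the strong-type bound then follows by Marcinkiewicz interpolation between two such endpoints on either side of the given $\beta$. To prove the weak-type estimate, I would fix $x \in \mathbb{R}^d$ and a parameter $R>0$ (to be chosen) and decompose
\[
\mathcal{T}|f|(x) = \int_{B_R(x)} \frac{|f(y)|}{\|x-y\|^{d-1}}\,dy + \int_{\mathbb{R}^d \setminus B_R(x)} \frac{|f(y)|}{\|x-y\|^{d-1}}\,dy =: I_1(x) + I_2(x).
\]
The near-field term $I_1(x)$ is bounded, via dyadic annuli $\{2^{-k-1}R \le \|x-y\| < 2^{-k}R\}$ and the Hardy--Littlewood maximal operator $M$, by $C(d)\, R\, Mf(x)$. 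The far-field term $I_2(x)$ is controlled by Hölder's inequality with exponent $\beta$: since $\beta < d$ gives $\beta' > d/(d-1)$ and hence $(d-1)\beta' > d$, the integral $\int_{\|y\|>R} \|y\|^{-(d-1)\beta'} dy$ converges and yields $I_2(x) \le C(d,\beta)\, R^{1-d/\beta} \|f\|_{L^\beta(\mathbb{R}^d)}$. This also shows $I_1(x), I_2(x) < \infty$ for a.e. $x$, so $\mathcal{T}f$ converges absolutely a.e.

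Finally, choosing $R$ so that the two resulting bounds balance, i.e. $R^{d/\beta} \sim \|f\|_{L^\beta(\mathbb{R}^d)}/(Mf(x))$, produces the pointwise estimate $\mathcal{T}|f|(x) \le C(d,\beta)\, (Mf(x))^{\beta/d} \|f\|_{L^\beta(\mathbb{R}^d)}^{1-\beta/d}$. Combining this with the weak-type $(1,1)$ bound for $M$ yields the weak-type estimate displayed above, and interpolating across $\beta$ then gives the strong-type bound in $L^{d\beta/(d-\beta)}(\mathbb{R}^d)$ with an explicit constant depending only on $d$ and $\beta$. There is no serious obstacle here; the only delicate point is ensuring that the choice of $R$ is legitimate on the set where $Mf(x) > 0$ (i.e. a.e., since $f \neq 0$), and handling the trivial case $Mf(x)=0$ separately, which forces $f=0$ a.e. on a neighborhood and gives $\mathcal{T}|f|(x)$ controlled purely by the far-field term.
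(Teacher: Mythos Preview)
Your approach is correct in spirit and genuinely different from the paper's. The paper does not prove anything here: it simply observes that $\mathcal{T}f = \frac{2\pi^{d/2}\Gamma(1/2)}{\Gamma((d-1)/2)}\, I_1 f$ is a constant multiple of the Riesz potential of order $1$, and then quotes \cite[Chapter V, Theorem 1]{S70} for both the a.e.\ absolute convergence and the $L^\beta \to L^{d\beta/(d-\beta)}$ bound. Your route is Hedberg's self-contained proof via the near-field/far-field splitting and the maximal function; it is longer but has the virtue of making the constant traceable and verifying the a.e.\ convergence by hand, which is consistent with the paper's emphasis on explicit constants.

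Two small corrections are in order. First, the exponents in your pointwise Hedberg inequality are swapped: balancing $R\,Mf(x)$ against $R^{1-d/\beta}\|f\|_{L^\beta}$ gives $\mathcal{T}|f|(x) \le C(d,\beta)\,(Mf(x))^{1-\beta/d}\,\|f\|_{L^\beta}^{\beta/d}$, not $(Mf(x))^{\beta/d}\|f\|_{L^\beta}^{1-\beta/d}$. Second, once you have this (correct) pointwise bound, the detour through weak-type $(1,1)$ for $M$ and Marcinkiewicz interpolation is unnecessary: since $\beta>1$, the strong $L^\beta$ bound for $M$ applied to $(Mf)^{1-\beta/d}$ with exponent $q=\frac{d\beta}{d-\beta}$ (noting $q(1-\beta/d)=\beta$) gives the strong-type conclusion in one line. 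Your stated route ``weak $(1,1)$ for $M$ $\Rightarrow$ weak-type for $\mathcal{T}$ at the given $\beta$'' does not quite parse, because weak $(1,1)$ controls $|\{Mf>s\}|$ in terms of $\|f\|_{L^1}$, not $\|f\|_{L^\beta}$.
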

\begin{proof}
Let $I_{\alpha}$ be a Riesz potential defined as in \cite[(4), Chapter V]{S70}. Then, by \cite[Chapter V, Theorem 1(i)]{S70}, $\mathcal{T}f$ defined by
$$
\mathcal{T}f(x)=  \frac{2\pi^{d/2} \Gamma(1/2)}{\Gamma\left(\frac{d}{2}-\frac{1}{2}  \right)} I_1 f(x).
$$ 
converges absolutely for a.e. $x \in \mathbb{R}^d$, where $\Gamma$ is the gamma function. Moreover, \cite[Chapter V, Theorem 1(ii)]{S70} implies that
$\mathcal{T}f \in L^{\frac{d\beta}{d-\beta}}(\mathbb{R}^d)$ and there exists a constant  $\tilde{c}(d, \beta)>0$ which only depends on $d$ and $\beta$  such that
$$
\|\mathcal{T}f\|_{L^{\frac{d\beta}{d-\beta}}(\mathbb{R}^d)} =\frac{2\pi^{d/2} \Gamma(1/2)}{\Gamma\left(\frac{d}{2}-\frac{1}{2}  \right)} \|I_1 f\|_{L^{\frac{d\beta}{d-\beta}}(\mathbb{R}^d)}  \leq \frac{2\pi^{d/2} \Gamma(1/2)}{\Gamma\left(\frac{d}{2}-\frac{1}{2}  \right)} \tilde{c} (d, \beta) \|f\|_{L^{\beta}(\mathbb{R}^d)},
$$
and hence the assertion follows.
\end{proof}

\begin{theo} \label{essenfuldiveq}
Let $\hat{f} \in L^{\beta}(\mathbb{R}^d)$ with $\beta \in (1, d)$. Then, there exists $\hat{\bold{F}} \in L^{\frac{d\beta}{d-\beta}}(\mathbb{R}^d, \mathbb{R}^d)$ such that
\begin{equation} \label{intidendiveq2}
\int_{\mathbb{R}^d} \langle  \hat{\bold{F}}, \nabla \psi \rangle dx = \int_{\mathbb{R}^d} \hat{f} \psi dx, \quad \text{ for all $\psi \in C_0^{\infty}(\mathbb{R}^d)$}
\end{equation}
and that
\begin{equation} \label{diveqestime}
\| \hat{\bold{F}} \|_{L^{\frac{d\beta}{d-\beta}}(\mathbb{R}^d)} \leq \hat{c}(d, \beta)  \|\hat{f}\|_{L^{\beta}(\mathbb{R}^d)},
\end{equation}
where $\hat{c}(d, \beta)>0$ is a constant which only depends on $d$ and $\beta$.
\end{theo}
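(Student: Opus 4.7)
The plan is to realise $\hat{\bold{F}}$ as the gradient of the Newton potential of $\hat{f}$ and to read off both the distributional identity \eqref{intidendiveq2} and the estimate \eqref{diveqestime} directly from Lemma \ref{rieszsobol}. Let $\Gamma$ denote the standard fundamental solution of $-\Delta$ on $\mathbb{R}^d$; that is, $\Gamma(x) = -(2\pi)^{-1} \log \|x\|$ when $d=2$ and $\Gamma(x) = c_d \|x\|^{2-d}$ with $c_d > 0$ when $d \geq 3$, both satisfying $-\Delta \Gamma = \delta_0$ distributionally. In either case, a direct computation yields the pointwise kernel bound $\|\nabla \Gamma(x)\| \leq C_d \|x\|^{1-d}$ for $x \neq 0$, with $C_d > 0$ depending only on $d$. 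I would then define
$$
\hat{\bold{F}}(x) := \int_{\mathbb{R}^d} \nabla \Gamma(x-y)\, \hat{f}(y)\, dy, \qquad x \in \mathbb{R}^d,
$$
and observe that $\|\hat{\bold{F}}(x)\| \leq C_d\,(\mathcal{T}|\hat{f}|)(x)$ pointwise. Lemma \ref{rieszsobol} then immediately delivers absolute convergence of the defining integral for a.e.\ $x$, membership $\hat{\bold{F}} \in L^{d\beta/(d-\beta)}(\mathbb{R}^d, \mathbb{R}^d)$, and the estimate \eqref{diveqestime} with $\hat{c}(d,\beta) := C_d\, c(d,\beta)$.

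To verify the integral identity \eqref{intidendiveq2}, I would pass through an approximation. Choose $\hat{f}_n \in C_0^{\infty}(\mathbb{R}^d)$ with $\hat{f}_n \to \hat{f}$ in $L^{\beta}(\mathbb{R}^d)$, and set $u_n := \Gamma * \hat{f}_n$. Since $\hat{f}_n$ has compact support and $\Gamma \in L^1_{loc}(\mathbb{R}^d)$, the convolution $u_n$ lies in $C^{\infty}(\mathbb{R}^d)$, differentiation may be carried through the integral, and $-\Delta u_n = (-\Delta \Gamma) * \hat{f}_n = \hat{f}_n$ classically. Hence $\nabla u_n$ agrees with the convolution-formula vector field associated with $\hat{f}_n$, and for any $\psi \in C_0^{\infty}(\mathbb{R}^d)$, integration by parts gives
$$
\int_{\mathbb{R}^d} \langle \nabla u_n, \nabla \psi \rangle\, dx = -\int_{\mathbb{R}^d} (\Delta u_n)\,\psi\, dx = \int_{\mathbb{R}^d} \hat{f}_n\,\psi\, dx.
$$
The same pointwise bound applied to $\hat{f} - \hat{f}_n$, together with Lemma \ref{rieszsobol}, yields $\nabla u_n \to \hat{\bold{F}}$ in $L^{d\beta/(d-\beta)}(\mathbb{R}^d, \mathbb{R}^d)$, while $\hat{f}_n \to \hat{f}$ in $L^{\beta}(\mathbb{R}^d)$; since $\psi \in C_0^{\infty}(\mathbb{R}^d)$, H\"older's inequality lets both sides of the above identity pass to the limit, producing \eqref{intidendiveq2}.

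The argument is essentially linear once Lemma \ref{rieszsobol} is in place, and I do not expect any genuine obstacle. The only point requiring a modicum of care is treating $d=2$ and $d\geq 3$ uniformly: although $\Gamma$ itself has very different large-scale behaviour in the two regimes (logarithmic versus polynomial), the pointwise kernel bound $\|\nabla \Gamma(x)\| \leq C_d \|x\|^{1-d}$ and the distributional identity $-\Delta \Gamma = \delta_0$ hold uniformly in $d \geq 2$, so the construction above and the subsequent approximation work identically in both cases.
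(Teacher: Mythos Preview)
Your proposal is correct and follows essentially the same route as the paper: define $\hat{\bold{F}}$ as the gradient of the Newton potential of $\hat{f}$, bound it pointwise by the Riesz potential $\mathcal{T}|\hat{f}|$ so that Lemma~\ref{rieszsobol} gives \eqref{diveqestime}, and verify \eqref{intidendiveq2} by approximating $\hat{f}$ in $L^{\beta}$ by $C_0^{\infty}$ functions, for which the identity is classical. The paper writes out the kernel $\nabla\Gamma$ explicitly as $\mathcal{S}g(x)=\frac{-1}{d\alpha(d)}\int \frac{g(y)(x-y)}{\|x-y\|^{d}}\,dy$, but the argument is otherwise identical.
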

\noindent
\begin{proof}
{\sf \underline{Step 1})}: Let $f \in C_0^{\infty}(\mathbb{R}^d)$ and define $\mathcal{N}f$ given by
\[
\mathcal{N}f(x) =
\begin{cases}
\displaystyle \frac{1}{d(d-2) \alpha(d)} \int_{\mathbb{R}^d}\frac{f(y)}{\|x- y\|^{d-2}} dy, & \text{if } d \geq 3, \\[14pt]
\displaystyle -\frac{1}{2\pi} \int_{\mathbb{R}^2} \log(\|x-y\|) f(y) dy, & \text{if } d=2,
\end{cases}
\]
where $\alpha(d):=\frac{\pi^{\frac{d}{2}}}{\Gamma(\frac{d}{2}+1)}$. Then, it follows from \cite[Section 2.1, Theorem 1]{Ev10} that $\mathcal{N}f \in C^2(\mathbb{R}^d)$ and 
\begin{equation} \label{solpoisson}
-\Delta (\mathcal{N}f) =f \; \text{ on } \; \mathbb{R}^d.
\end{equation}
Now as in Lemma \ref{rieszsobol}, we can define for each $g \in L^{\beta}(\mathbb{R}^d)$
$$
\mathcal{S}g(x):=\frac{-1}{d\alpha(d)} \int_{\mathbb{R}^d}\frac{g(y)}{|x- y|^{d-1}} \frac{x-y}{\|x-y\|} dy, \quad \text{ a.e. }  x \in \mathbb{R}^d.
$$
Then, by Lemma \ref{rieszsobol}, $\mathcal{S}g \in L^{\frac{d\beta}{d-\beta}}(\mathbb{R}^d, \mathbb{R}^d)$ and
\begin{equation} \label{sobolrieszha}
\| \mathcal{S}g \|_{L^{\frac{d\beta}{d-\beta}}(\mathbb{R}^d)} \leq \frac{c(d, \beta)}{d\alpha(d)} \|g\|_{L^{\beta}(\mathbb{R}^d)},
\end{equation}
where $c(d, \beta)>0$ is a constant as in Lemma \ref{rieszsobol}.
Meanwhile, $\nabla (\mathcal{N}f) \in C^1(\mathbb{R}^d, \mathbb{R}^d)$ and $\nabla (\mathcal{N}f) = \mathcal{S}f$ a.e. on $\mathbb{R}^d$, and hence \eqref{solpoisson} yields
\begin{equation} \label{weaksolpoisso}
\int_{\mathbb{R}^d}  \langle \mathcal{S}f, \nabla \psi \rangle dx =  \int_{\mathbb{R}^d} f \psi dx, \quad \text{ for all $\psi \in C_0^{\infty}(\mathbb{R}^d)$}.
\end{equation}
\\
{\sf \underline{Step 2})}:
Now let $\hat{f} \in L^{\beta}(\mathbb{R}^d)$ with $\beta \in (1, d)$ and define $\hat{\bold{F}}:=\mathcal{S}\hat{f} \in L^{\frac{d\beta}{d-\beta}}(\mathbb{R}^d, \mathbb{R}^d)$. Then, \eqref{sobolrieszha} yield \eqref{diveqestime}.
Let $(\hat{f}_n)_{n \geq 1}$ be a sequence of functions  in $C_0^{\infty}(\mathbb{R}^d)$ such that
$$
\lim_{n \rightarrow \infty} \hat{f}_n = \hat{f} \quad \text{ in } L^{\beta}(\mathbb{R}^d). 
$$
Then, \eqref{sobolrieszha} implies that $\mathcal{S} \hat{f}_n \in L^{\frac{d\beta}{d-\beta}}(\mathbb{R}^d, \mathbb{R}^d)$ and
$$
\lim_{n \rightarrow \infty} \mathcal{S} \hat{f}_n = \mathcal{S} \hat{f}=\hat{\bold{F}} \quad \text{ in } L^{\frac{d\beta}{d-\beta}}(\mathbb{R}^d, \mathbb{R}^d).
$$
Letting $n \rightarrow \infty$ in \eqref{weaksolpoisso} where $f$ is replaced by $\hat{f}_n$, we finally get \eqref{intidendiveq2}.
\end{proof}

\begin{cor} \label{essenbdddiveq}
Let $U$ be an open subset of $\mathbb{R}^d$ and let $\hat{f} \in L^{\beta}(U)$ with $\beta \in (1,d)$. Then there exists a vector field $\hat{\bold{F}} \in  L^{\frac{d\beta}{d-\beta}}(U, \mathbb{R}^d)$ such that
\begin{equation} \label{intidendiveqbd}
\int_{U} \langle \hat{\bold{F}}, \nabla \psi \rangle = \int_{U}  \hat{f} \psi dx, \quad \text{ for all $\psi \in C_0^{\infty}(U)$}
\end{equation}
and that 
\begin{equation*} \label{diveqestibdd}
\| \hat{\bold{F}} \|_{L^{\frac{d\beta}{d-\beta}}(U)} \leq \hat{c}(d, \beta)  \|\hat{f}\|_{L^{\beta}(U)},
\end{equation*}
where $\hat{c}(d, \beta)>0$ is a constant as in Theorem \ref{essenfuldiveq} which only depends on $d$ and $\beta$.
\end{cor}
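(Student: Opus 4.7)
The plan is to reduce Corollary \ref{essenbdddiveq} to Theorem \ref{essenfuldiveq} by the standard device of zero-extension. First, I would define $\tilde{f} \in L^{\beta}(\mathbb{R}^d)$ by extending $\hat{f}$ by zero outside $U$; this preserves the $L^{\beta}$-norm, so $\|\tilde{f}\|_{L^{\beta}(\mathbb{R}^d)} = \|\hat{f}\|_{L^{\beta}(U)}$. Since $\beta \in (1,d)$ by hypothesis, Theorem \ref{essenfuldiveq} applies and produces a vector field $\tilde{\bold{F}} \in L^{\frac{d\beta}{d-\beta}}(\mathbb{R}^d, \mathbb{R}^d)$ satisfying
$$
\int_{\mathbb{R}^d} \langle \tilde{\bold{F}}, \nabla \varphi \rangle \, dx = \int_{\mathbb{R}^d} \tilde{f} \varphi \, dx, \quad \text{for all } \varphi \in C_0^{\infty}(\mathbb{R}^d),
$$
together with the norm bound $\|\tilde{\bold{F}}\|_{L^{\frac{d\beta}{d-\beta}}(\mathbb{R}^d)} \leq \hat{c}(d,\beta)\, \|\tilde{f}\|_{L^{\beta}(\mathbb{R}^d)}$.

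Next, I would set $\hat{\bold{F}} := \tilde{\bold{F}}\big|_{U}$, the restriction of $\tilde{\bold{F}}$ to $U$. Clearly $\hat{\bold{F}} \in L^{\frac{d\beta}{d-\beta}}(U, \mathbb{R}^d)$, and its norm is bounded by $\|\tilde{\bold{F}}\|_{L^{\frac{d\beta}{d-\beta}}(\mathbb{R}^d)} \leq \hat{c}(d,\beta)\, \|\hat{f}\|_{L^{\beta}(U)}$, which is the desired estimate. To verify the weak identity \eqref{intidendiveqbd}, let $\psi \in C_0^{\infty}(U)$ be arbitrary. Since $\operatorname{supp}(\psi)$ is a compact subset of $U$, the zero-extension $\tilde{\psi}$ lies in $C_0^{\infty}(\mathbb{R}^d)$, and $\nabla \tilde{\psi}$ vanishes outside $\operatorname{supp}(\psi) \subset U$. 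Substituting $\varphi = \tilde{\psi}$ into the identity on $\mathbb{R}^d$ and using that $\tilde{f} = 0$ on $\mathbb{R}^d \setminus U$ gives
$$
\int_{U} \langle \hat{\bold{F}}, \nabla \psi \rangle \, dx = \int_{\mathbb{R}^d} \langle \tilde{\bold{F}}, \nabla \tilde{\psi} \rangle \, dx = \int_{\mathbb{R}^d} \tilde{f} \, \tilde{\psi} \, dx = \int_{U} \hat{f} \, \psi \, dx,
$$
which is exactly \eqref{intidendiveqbd}.

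There is essentially no obstacle here; the argument is pure bookkeeping and relies entirely on the $\mathbb{R}^d$-version already established in Theorem \ref{essenfuldiveq}. The only point requiring a moment of care is the compatibility of test functions under zero-extension, but because $\operatorname{supp}(\psi)$ is strictly interior to $U$, the extension $\tilde{\psi}$ is smooth and compactly supported in $\mathbb{R}^d$, so no regularity at $\partial U$ needs to be invoked. This makes the corollary essentially a one-line consequence of Theorem \ref{essenfuldiveq}.
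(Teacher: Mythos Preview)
Your proposal is correct and follows essentially the same approach as the paper: zero-extend $\hat{f}$ to $\mathbb{R}^d$, invoke Theorem~\ref{essenfuldiveq}, then restrict the resulting vector field to $U$ and observe that test functions in $C_0^{\infty}(U)$ are already test functions on $\mathbb{R}^d$. The only cosmetic difference is that the paper reuses the name $\hat{f}$ for the extension rather than introducing $\tilde{f}$, and it does not spell out the test-function step in as much detail as you do.
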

\begin{proof}
Extend $\hat{f}\in L^{\beta}(U)$ on $\mathbb{R}^d$ by the zero-extension and say again $\hat{f} \in L^{\beta}(\mathbb{R}^d)$. Using Theorem \ref{essenfuldiveq}, there exists $\hat{\bold{F}} \in L^{\frac{d\beta}{d-\beta}}(\mathbb{R}^d, \mathbb{R}^d)$ such that \eqref{intidendiveq2} holds, and hence \eqref{intidendiveqbd} follows. Moreover, \eqref{diveqestime} yields
$$
\| \hat{\bold{F}} \|_{L^{\frac{d\beta}{d-\beta}}(U)} \leq \| \hat{\bold{F}} \|_{L^{\frac{d\beta}{d-\beta}}(\mathbb{R}^d)}  \leq  \hat{c}(d, \beta)  \|\hat{f}\|_{L^{\beta}(\mathbb{R}^d)} = \hat{c}(d, \beta)  \|\hat{f}\|_{L^{\beta}(U)},
$$
as desired.
\end{proof}
\centerline{}
The following is a refinement of \cite[Theorem 2.8]{K07} with the help of Theorem \ref{essenfuldiveq}.  
\begin{theo}[Krylov] \label{krylovmainthm}
Let $A=(a_{ij})_{1 \leq i,j \leq d}$ be a (possibly non-symmetric) matrix of functions satisfying \eqref{elliptici} and assume that $a_{ij} \in VMO_{\omega}$ for all $1 \leq i,j \leq d$. Let $f \in L^{\frac{\gamma d}{\gamma+d}}(U)$ and $\bold{F} \in L^\gamma(U, \mathbb{R}^d)$ with $\gamma \in (d, \infty)$. Then, there exists a constant $\tilde{\lambda}>0$ which only depends on $d$, $\lambda$, $M$, $\gamma$ and $\omega$
and there exists a unique $u \in H^{1,\gamma}(\mathbb{R}^d)$ such that
$$
\int_{\mathbb{R}^d} \langle A \nabla u, \nabla \varphi \rangle dx  + \int_{\mathbb{R}^d} \tilde{\lambda} u \psi dx = \int_{\mathbb{R}^d} f \psi dx  + \int_{\mathbb{R}^d} \langle \mathbf{F}, \nabla \psi \rangle dx, \quad \text{ for all } \psi \in C_0^{\infty}(\mathbb{R}^d)
$$
and that
$$
\|u\|_{H^{1,\gamma}(\mathbb{R}^d)} \leq \tilde{N} \left( \|f\|_{L^{\frac{\gamma d}{\gamma+d}}(\mathbb{R}^d)} + \| \bold{F} \|_{L^\gamma(\mathbb{R}^d)} \right),
$$
where $\tilde{N}>0$ is a constant which only depends on $d$, $\lambda$, $M$, $\gamma$ and $\omega$.
\end{theo}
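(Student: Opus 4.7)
The plan is to reduce the problem to the case where the right-hand side is purely in divergence form, so that Krylov's original result in \cite[Theorem 2.8]{K07} applies directly, and then to use the duality between $L^\beta$ and the Riesz-potential-type correction furnished by Theorem \ref{essenfuldiveq} to absorb the $f$ term into an equivalent divergence datum. The crucial numerology is that with $\beta := \tfrac{\gamma d}{\gamma + d}$ one has, for $\gamma \in (d, \infty)$ and $d \geq 2$, that $\beta \in (1, d)$ (since $\gamma > d \geq \tfrac{d}{d-1}$) and the Sobolev conjugate $\tfrac{d\beta}{d-\beta}$ equals exactly $\gamma$. This is precisely the regularity pairing required to invoke Theorem \ref{essenfuldiveq}.

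First, I would apply Theorem \ref{essenfuldiveq} with exponent $\beta = \tfrac{\gamma d}{\gamma+d} \in (1, d)$ to the datum $f \in L^\beta(\mathbb{R}^d)$, obtaining a vector field $\hat{\bold{F}} \in L^\gamma(\mathbb{R}^d, \mathbb{R}^d)$ satisfying
\begin{equation*}
\int_{\mathbb{R}^d} \langle \hat{\bold{F}}, \nabla \psi \rangle\,dx = \int_{\mathbb{R}^d} f \psi\,dx, \quad \forall\, \psi \in C_0^\infty(\mathbb{R}^d),
\end{equation*}
together with the estimate $\|\hat{\bold{F}}\|_{L^\gamma(\mathbb{R}^d)} \leq \hat{c}(d,\beta)\|f\|_{L^\beta(\mathbb{R}^d)}$. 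Setting $\widetilde{\bold{F}} := \bold{F} + \hat{\bold{F}} \in L^\gamma(\mathbb{R}^d, \mathbb{R}^d)$, the PDE to be solved becomes equivalently
\begin{equation*}
\int_{\mathbb{R}^d} \langle A \nabla u, \nabla \psi \rangle\,dx + \tilde{\lambda}\int_{\mathbb{R}^d} u \psi\,dx = \int_{\mathbb{R}^d} \langle \widetilde{\bold{F}}, \nabla \psi \rangle\,dx, \quad \forall\, \psi \in C_0^\infty(\mathbb{R}^d),
\end{equation*}
which is the pure-divergence formulation covered by Krylov's theorem.

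Next, I would invoke \cite[Theorem 2.8]{K07} directly: under the hypothesis that $A$ satisfies \eqref{elliptici} and $a_{ij} \in VMO_\omega$, there exists $\tilde{\lambda} > 0$ depending only on $d, \lambda, M, \gamma, \omega$ and a unique $u \in H^{1,\gamma}(\mathbb{R}^d)$ solving the reformulated equation, with
\begin{equation*}
\|u\|_{H^{1,\gamma}(\mathbb{R}^d)} \leq \tilde{N}_0 \|\widetilde{\bold{F}}\|_{L^\gamma(\mathbb{R}^d)},
\end{equation*}
where $\tilde{N}_0$ depends only on $d, \lambda, M, \gamma, \omega$. Applying the triangle inequality and the bound from Theorem \ref{essenfuldiveq}, one obtains
\begin{equation*}
\|u\|_{H^{1,\gamma}(\mathbb{R}^d)} \leq \tilde{N}_0 \bigl(\|\bold{F}\|_{L^\gamma(\mathbb{R}^d)} + \hat{c}(d,\beta)\|f\|_{L^{\gamma d/(\gamma+d)}(\mathbb{R}^d)}\bigr),
\end{equation*}
which yields the desired inequality with $\tilde{N} := \tilde{N}_0 \cdot \max(1, \hat{c}(d,\beta))$, still depending only on $d, \lambda, M, \gamma, \omega$.

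Uniqueness is inherited verbatim from \cite[Theorem 2.8]{K07}: if $u_1, u_2 \in H^{1,\gamma}(\mathbb{R}^d)$ are two solutions, then $u_1 - u_2$ solves the homogeneous equation, which Krylov's result forces to vanish. I expect no substantive obstacle here; the only thing to verify carefully is the numerical identity $\tfrac{d\beta}{d-\beta} = \gamma$ and the range $\beta \in (1,d)$ for $\gamma \in (d,\infty)$ with $d \geq 2$, which is a short computation done once and for all. The role of Theorem \ref{essenfuldiveq} is essentially that of a bounded right inverse of the divergence from $L^\beta$ into $L^{d\beta/(d-\beta)}$, and its invocation here is what distinguishes the present statement from the original Krylov result.
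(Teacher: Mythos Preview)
Your proposal is correct and follows essentially the same route as the paper: apply Theorem \ref{essenfuldiveq} with $\beta=\tfrac{\gamma d}{\gamma+d}$ to convert $f$ into a divergence datum $\hat{\bold{F}}\in L^\gamma$, then invoke \cite[Theorem 2.8]{K07} with right-hand side $\bold{F}+\hat{\bold{F}}$ and combine the resulting estimate with the bound on $\hat{\bold{F}}$. The paper's proof is slightly terser about the numerology $\beta\in(1,d)$ and $\tfrac{d\beta}{d-\beta}=\gamma$, which you spell out explicitly, but the argument is otherwise identical.
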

\begin{proof}
By Theorem \ref{essenfuldiveq}, there exists $\hat{\bold{F}} \in L^{\gamma}(\mathbb{R}^d, \mathbb{R}^d)$ and a constant  $\bar{c}(d, \gamma)>0$ is a constant which only depends on $d$ and $\gamma$ such that 
\begin{equation*} \label{diveqestime2}
\| \hat{\bold{F}} \|_{L^{\gamma}(\mathbb{R}^d)} \leq \bar{c}(d, \gamma)  \|f\|_{L^{\frac{\gamma d}{\gamma+d}}(\mathbb{R}^d)},
\end{equation*}
and that
\begin{equation*} \label{intidendivkr}
\int_{\mathbb{R}^d} \langle  \hat{\bold{F}}, \nabla \psi \rangle dx = \int_{\mathbb{R}^d} f \psi dx, \quad \text{ for all $\psi \in C_0^{\infty}(\mathbb{R}^d)$}.
\end{equation*}
Using \cite[Theorem 2.8]{K07},
there exists a constant $\tilde{\lambda}>0$ which only depends on $d$, $\lambda$, $M$, $\gamma$ and $\omega$ and there exists a unique $u \in H^{1,\gamma}(\mathbb{R}^d)$ 
such that
$$
\int_{\mathbb{R}^d} \langle A \nabla u, \nabla \varphi \rangle dx  + \int_{\mathbb{R}^d} \tilde{\lambda} u \psi dx = \int_{\mathbb{R}^d} \langle \hat{\mathbf{F}} + \mathbf{F}, \nabla \psi \rangle dx=\int_{\mathbb{R}^d} f \psi dx  + \int_{\mathbb{R}^d} \langle \mathbf{F}, \nabla \psi \rangle dx, \quad \forall \psi \in C_0^{\infty}(\mathbb{R}^d)
$$
and that
\begin{align*}
\|u\|_{H^{1,\gamma}(\mathbb{R}^d)} &\leq N \left( \|\hat{\bold{F}}\|_{L^{\gamma}(\mathbb{R}^d)} + \| \bold{F} \|_{L^{\gamma}(U)}	\right) \\
& \leq \tilde{N} \left( \|f\|_{L^{\frac{\gamma d}{\gamma+d}}(\mathbb{R}^d)} + \| \bold{F} \|_{L^{\gamma}(U)}	\right),
\end{align*}
where $\tilde{N}:= N \Big( \bar{c}(d, \gamma) \vee N \Big)$ and  $N>0$ is a constant which only depends on $d$, $\lambda$, $M$, $\gamma$ and $\omega$. Thus, the proof is complete.
\end{proof}

\begin{prop} \label{propapprobdd}
Let $v \in H^{1,2}_0(U)_b$. Then, there exist a constant $M>0$ which only depends on $\|v\|_{L^{\infty}(U)}$ and a sequence of functions $(v_n)_{n \geq 1}$ in $C_0^{\infty}(U)$ such that $\sup_{n \geq 1} \|v_n \| \leq M$,
 $\lim_{n \rightarrow \infty} v_n = v$ a.e. on $U$ and $\lim_{n \rightarrow \infty} v_n = v$ in $H^{1,2}_0(U)$.
\end{prop}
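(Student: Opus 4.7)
The plan is to combine the density of $C_0^\infty(U)$ in $H^{1,2}_0(U)$ with a smooth truncation of the approximants to enforce the uniform $L^\infty$ bound. Set $L := \|v\|_{L^\infty(U)}$. If $L = 0$ take $v_n \equiv 0$, so assume $L > 0$. Choose a smooth truncation $\psi_L \in C^\infty(\mathbb{R})$ satisfying $\psi_L(0) = 0$, $\psi_L(t) = t$ for $|t| \leq L$, $|\psi_L(t)| \leq 2L$ and $0 \leq \psi_L'(t) \leq 1$ for all $t \in \mathbb{R}$; such $\psi_L$ can be built as the primitive of a smooth non-negative bump equal to $1$ on $[-L,L]$ and supported in $[-2L,2L]$, normalized so that $\psi_L(0) = 0$. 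The scaling $\psi_L(t) = L\,\psi_1(t/L)$ shows that this is always possible.

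Using the density of $C_0^\infty(U)$ in $H^{1,2}_0(U)$, pick $(\tilde{v}_n)_{n \geq 1} \subset C_0^\infty(U)$ with $\tilde{v}_n \to v$ in $H^{1,2}(U)$, and after extracting a subsequence, which I keep calling $(\tilde{v}_n)$, also $\tilde{v}_n \to v$ a.e.\ on $U$. Define $v_n := \psi_L \circ \tilde{v}_n$. Since $\psi_L \in C^\infty$ and $\psi_L(0) = 0$, $v_n \in C^\infty(U)$ with support contained in $\mathrm{supp}(\tilde{v}_n)$, which is compact in $U$; hence $v_n \in C_0^\infty(U)$ and $\|v_n\|_{L^\infty(U)} \leq 2L =: M$, which depends only on $\|v\|_{L^\infty(U)}$.

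The convergence statements rest on the observation that $|v| \leq L$ a.e.\ implies $\psi_L(v) = v$ and $\psi_L'(v) = 1$ a.e. Continuity of $\psi_L$ together with the a.e.\ convergence $\tilde{v}_n \to v$ yields $v_n \to v$ a.e.\ on $U$. The $1$-Lipschitz bound $|\psi_L'| \leq 1$ gives $|v_n - v| = |\psi_L(\tilde{v}_n) - \psi_L(v)| \leq |\tilde{v}_n - v|$ a.e., hence $v_n \to v$ in $L^2(U)$. For the gradient I apply the chain rule $\nabla v_n = \psi_L'(\tilde{v}_n) \nabla \tilde{v}_n$ and decompose
\begin{equation*}
\nabla v_n - \nabla v = \psi_L'(\tilde{v}_n)(\nabla \tilde{v}_n - \nabla v) + (\psi_L'(\tilde{v}_n) - 1)\,\nabla v.
\end{equation*}
The first summand has $L^2$-norm bounded by $\|\nabla \tilde{v}_n - \nabla v\|_{L^2(U)} \to 0$, while for the second, continuity of $\psi_L'$ and the a.e.\ convergence of $\tilde{v}_n$ give $\psi_L'(\tilde{v}_n) \to 1$ a.e., and the domination $|(\psi_L'(\tilde{v}_n) - 1)\nabla v|^2 \leq 4\,|\nabla v|^2 \in L^1(U)$ allows the Lebesgue dominated convergence theorem to take over. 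Thus $\nabla v_n \to \nabla v$ in $L^2(U, \mathbb{R}^d)$, and altogether $v_n \to v$ in $H^{1,2}_0(U)$.

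No step is genuinely hard; the only real point of care is to use a \emph{smooth} truncation rather than the piecewise-linear one $(\cdot \wedge L) \vee (-L)$, so that the composition with $\tilde{v}_n$ lies in $C_0^\infty(U)$ and not merely in $H^{1,2}_0(U)_b$, and to keep track of the two error contributions in the gradient split above.
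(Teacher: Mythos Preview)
Your proof is correct and follows essentially the same strategy as the paper's: compose an $H^{1,2}_0$-approximating sequence $(\tilde{v}_n)$ with a smooth bounded function equal to the identity on the range of $v$, then verify $H^{1,2}$-convergence via the same gradient splitting and dominated convergence. The only cosmetic difference is that the paper takes $\eta(t)=t$ on the slightly larger interval $|t|\leq \|v\|_{L^\infty(U)}+1$ without insisting on $|\eta'|\leq 1$, whereas you arrange $|\psi_L'|\leq 1$ to get the Lipschitz estimate $|v_n-v|\leq|\tilde v_n-v|$ directly; both work.
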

\begin{proof}
Let $\eta \in C_0^{\infty}(\mathbb{R})$ be a function satisfying $\eta(t)=t$ if $|t| \leq \|v\|_{L^{\infty}(U)}+1$ and let $M:=\| \eta\|_{L^{\infty}(\mathbb{R})}$.
Since $v \in H^{1,2}_0(U)$ there exists a sequence of functions $(\tilde{v}_n)_{n \geq 1}$ in $C_0^{\infty}(U)$ such that $\lim_{n \rightarrow \infty} \tilde{v}_n = v$ in $H^{1,2}_0(U)$ and $\lim_{n \rightarrow \infty} \tilde{v}_n =v$ a.e. on $U$. Now, define
for each $n \geq 1$ $ v_n:=\eta \circ \tilde{v}_n$ in $U$. Then, $v_n \in C_0^{\infty}(U)$ with $\| v_n \|_{L^{\infty}(U)} \leq M$ for all $n \geq 1$ and $\lim_{n \rightarrow \infty} v_n =v$ a.e. on $U$. Moreover, it follows from the Lebesgue dominated convergence theorem that $\lim_{n \rightarrow \infty} \eta(u_n) = \eta(v) = v$ in $L^2(U)$ and that
$$
\| \nabla v_n - \nabla v \|_{L^2(U)}  \leq \| \eta'(u_n) \nabla u_n - \eta'(u_n) \nabla u \|_{L^2(U)} + \| \eta'(u_n) \nabla u - \eta'(u) \nabla u \|_{L^2(U)} \rightarrow 0 \quad \text{ as $n \rightarrow \infty$},
$$
as desired.
\end{proof}

\begin{prop} \label{sandwich}
Let $B$ be an open ball in $\mathbb{R}^d$, and let $f \in H^{1,2}(B)$ and $g \in H^{1,2}_0(B)$ be such that
$$
0 \leq f \leq g, \quad \text{ a.e. on $B$}.
$$
Then, $f \in H^{1,2}_0(B)$.
\end{prop}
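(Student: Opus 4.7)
The plan is to reduce the statement to showing that the trace of $f$ on $\partial B$ vanishes. Because $B$ is a ball, hence a smooth (in particular Lipschitz) domain, the bounded linear trace operator $T : H^{1,2}(B) \to L^2(\partial B)$ (the same one used in the proof of Theorem \ref{existrho}) is available, together with the classical characterization
\[
H^{1,2}_0(B) = \{ v \in H^{1,2}(B) : T(v) = 0 \text{ in } L^2(\partial B) \}.
\]
So it suffices to prove $T(f) = 0$ a.e. on $\partial B$.

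First I would establish the positivity-preservation of the trace: if $v \in H^{1,2}(B)$ satisfies $v \geq 0$ a.e. in $B$, then $T(v) \geq 0$ a.e. on $\partial B$. To see this, choose $(v_n) \subset C^{\infty}(\overline{B})$ with $v_n \to v$ in $H^{1,2}(B)$, which is possible by the density theorem on Lipschitz domains. Each positive part $v_n^+$ lies in $H^{1,2}(B)$ with classical boundary trace $T(v_n^+) = v_n^+|_{\partial B} \geq 0$. The Stampacchia chain rule yields the continuity of $w \mapsto w^+$ on $H^{1,2}(B)$, so $v_n^+ \to v^+ = v$ in $H^{1,2}(B)$; continuity of $T$ then gives $T(v_n^+) \to T(v)$ in $L^2(\partial B)$, whence $T(v) \geq 0$ a.e. on $\partial B$.

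Next I would apply this positivity property to $f$ and to $g - f$, both of which are nonnegative a.e.\ in $B$ and lie in $H^{1,2}(B)$. This produces $T(f) \geq 0$ and $T(g-f) \geq 0$ a.e.\ on $\partial B$. Since $g \in H^{1,2}_0(B)$ gives $T(g) = 0$, linearity of $T$ yields $T(g-f) = -T(f)$, hence $T(f) \leq 0$. Combined with $T(f) \geq 0$ this forces $T(f) = 0$ a.e.\ on $\partial B$, and the characterization above delivers $f \in H^{1,2}_0(B)$, as required.

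The only delicate ingredient is the positivity of the trace, which hinges on the continuity of $w \mapsto w^+$ on $H^{1,2}(B)$; this is however a standard consequence of Stampacchia's chain rule, so I do not anticipate a genuine obstacle. The rest of the argument is a purely algebraic manipulation based on linearity of $T$ and the pointwise inequalities $0 \leq f \leq g$.
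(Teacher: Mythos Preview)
Your argument is correct, but it follows a different route from the paper's. You rely on the trace characterization $H^{1,2}_0(B)=\{v\in H^{1,2}(B): T(v)=0\}$ together with the monotonicity of the trace operator, which you obtain from the continuity of $w\mapsto w^{+}$ on $H^{1,2}(B)$; the conclusion then drops out by squeezing $T(f)$ between $0$ and $T(g)=0$. The paper instead avoids the trace theorem entirely: it picks $(g_n)\subset C_0^{\infty}(B)$ with $g_n\to g$ in $H^{1,2}(B)$, sets $f_n:=f\wedge g_n$, observes that each $f_n$ has compact support in $B$ and hence lies in $H^{1,2}_0(B)$, checks a uniform $H^{1,2}$-bound for $(f_n)$ via the identity $f\wedge g_n=\tfrac12(|f+g_n|-|f-g_n|)$, and then uses weak compactness of $H^{1,2}_0(B)$ together with $f_n\to f$ a.e.\ to conclude $f\in H^{1,2}_0(B)$. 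Your approach is cleaner and more conceptual once the trace machinery is in hand; the paper's approach is more self-contained in that it works straight from the definition of $H^{1,2}_0(B)$ as a closure, without invoking either the existence of the trace or the kernel characterization, and would transfer unchanged to any open set (not just a smooth domain).
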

\begin{proof}
First, observe that $g \in H^{1,2}_0(B)$, and hence there exists a sequence $(g_n)_{n \geq 1} \subset C_0^{\infty}(B)$ such that $\lim_{n \rightarrow \infty} g_n = g$ in $H^{1,2}(B)$. Thus, there exists a constant $M > 0$ such that
$$
\sup_{n \geq 1} \|g_n\|_{H^{1,2}(B)} \leq M.
$$
For each $n \geq 1$, define $f_n := f \wedge g_n$. Then, by \cite[Theorem 4.4(iii)]{EG15}, for each $n \geq 1$
we have
$$
f_n = \frac{1}{2} \left( |f + g_n| - |f - g_n| \right) \in H^{1,2}(B),
$$
and hence
\begin{align*} 
\|f_n\|_{H^{1,2}(B)} &\leq \frac12 \Big \| |f + g_n| \Big\|_{H^{1,2}(B)} + \frac12 \Big \| |f - g_n| \Big\|_{H^{1,2}(B)}  \\[5pt]
&= \frac12 \|f + g_n\|_{H^{1,2}(B)} + \frac12 \|f - g_n\|_{H^{1,2}(B)} \\[5pt]
&\leq \|g_n\|_{H^{1,2}(B)} + \|f\|_{H^{1,2}(B)} \leq M + \|f\|_{H^{1,2}(B)}.  
\end{align*}
Thus, 
\begin{equation}\label{approxi}
\sup_{n \geq 1} \|f_n\| \leq M + \|f\|_{H^{1,2}(B)}.
\end{equation}
Since $f_n \in H^{1,2}(B)$ and $f_n$ has compact support in $B$ for each $n \geq 1$, we find that $f_n \in H^{1,2}_0(B)$ for each $n \geq 1$. Moreover, $\lim_{n \rightarrow \infty} f_n = f$ a.e. on $B$. Therefore, by applying the weak compactness of $H^{1,2}_0(B)$ to \eqref{approxi}, we conclude that $f \in H^{1,2}_0(B)$.
\end{proof}
\text{}\\ \\ \\
\noindent
{\bf Acknowledgment.}\;
The author is grateful to the anonymous referees for their constructive comments and valuable suggestions, which improved the quality of this paper.

\text{}\\

Haesung Lee\\
Department of Mathematics and Big Data Science,  \\
Kumoh National Institute of Technology, \\
Gumi, Gyeongsangbuk-do 39177, Republic of Korea, \\
E-mail: fthslt@kumoh.ac.kr, \; fthslt14@gmail.com
\end{document}